\newtheorem{theorem}{Theorem}[section]
\newtheorem{definition}[theorem]{Definition}
\newtheorem{lemma}[theorem]{Lemma}
\newtheorem{proposition}[theorem]{Proposition}
\newtheorem{remark}[theorem]{Remark} 
\numberwithin{equation}{section}
\begin{document}
\title[New Problem]{Linear non-degeneracy and uniqueness of the bubble solution for the critical fractional H\'enon equation in $\mathbb{R}^N$}
\author[Salom\'on Alarc\'on]{S. Alarc\'on$^{\dag,1}$}
\author[Bego\~na Barrios]{B. Barrios$^{\ddag,2}$}
\author[Alexander Quaas]{A. Quaas$^{*,1}$}

\address[$\dag$]{Departamento de  Matem\'atica, Universidad T\'ecnica Federico Santa Mar\'ia, Casilla 110-V, Valpara\'iso, Chile.}
\address[$\ddag$]{Departamento de An\'alisis Matem\'atico, Universidad de La Laguna C/. Astrof\'isico Francisco S\'anchez s/n, 38200 - La Laguna, Spain.}
\email[$1$]{salomon.alarcon@usm.cl}
\email[$2$]{bbarrios@ull.es}
\email[$3$]{alexander.quaas@usm.cl}
\date{\today}
\maketitle
\begin{abstract}
{{We study the equation 
\begin{equation*}\label{P0}
(-\Delta)^s u = |x|^{\alpha} u^{\frac{N+2s+2\alpha}{N-2s}}\mbox{ in }\mathbb{R}^N,\tag{P}
\end{equation*}
where  $(-\Delta)^s$ is the fractional Laplacian operator with $0 < s < 1$, $\alpha>-2s$ and  $N>2s$. We prove the linear non-degeneracy of positive  radially symmetric  solutions of the equation (\ref{P0}) and, as a consequence, a uniqueness result of those solutions with Morse index equal to one. In particular, the ground state solution is unique.   Our non-degeneracy result extends in the radial setting some known theorems done by D\'avila, Del Pino and Sire (see \cite[Theorem 1.1]{Davila-DelPino-Sire}), and Gladiali, Grossi and Neves (see \cite[Theorem 1.3]{Gladiali-Grossi-Neves}).}}
\end{abstract}


\setcounter{equation}{0}
\section{Introduction}
Ground state solutions play a fundamental role in analysis and mathematical physics in general, and are usually obtained by minimizing an energy functional on a manifold. 
Due to the Euler-Lagrange conditions, these are sometimes weak  positive solutions of a nonlinear elliptic equation with Morse Index equal to one. 
In the second-order one-dimensional case the ground state is unique and corresponds to the homoclinic given by the Poincar\'e-Bendixson Theorem \cite{Cod}. 
In analysis, such solutions are connected with optimal constants in Hardy-Littlewood \cite{ha}\cite{ha1} -Sobolev \cite{S} type inequalities and many other problems as we comment below.\bigskip

The study of the uniqueness and the non-degeneracy property is a cornerstone issue in many problems in Partial Differential Equations. 
For example, some concentration phenomena may be studied via the Lyapunov-Schmidt reduction method that is based on the linear non-degeneracy and uniqueness up to translation and/or rescaling of the solutions. 
The linear non-degeneracy is also connected with the stability of solutions for some nonlinear parabolic equations and for establishing bifurcation results to several equations (\cite{Roy}, \cite{Ireneo}).  
In particular, bifurcation of non-radial solutions at the H\'enon critical equation is found  in \cite{Gladiali-Grossi-Neves}. 
For the fractional nonlinear  Schr\"odinger equation, a review and many connections with analysis and mathematical physics can be found in the work by Frank and Lenzmann \cite{Frank-Lenzmann}, Frank, Lenzmann and Silvestre \cite{Frank-Lenzmann-Silvestre} (see also \cite{Felmer}). 
To mention other application, D\'avila, Del Pino and Wei in \cite{Davila-DelPino-Wei} used the linear non-degeneracy to establish the existence of concentrating standing wave solutions for the fractional nonlinear Schr\"odinger equation.\bigskip

The first aim of this work is to study the linear non-degeneracy of {{positive radially symmetric solutions}} for the fractional equation
\begin{equation}\label{P}
(-\Delta)^s u = |x|^{\alpha} u^{p_{\alpha,s}^*}\quad \mbox{ in }\mathbb{R}^N,
\end{equation} 
where $p_{\alpha,s}^*:= 2^*_{s,\alpha} - 1= \frac{N+2s+2\alpha}{N-2s}$, $0<s<1$, $\alpha>-2s$ and $N>2s$. Here $(-\Delta)^s$, $0<s<1$, denotes the well-known \emph{fractional Laplacian}, that is defined on smooth functions as {
\begin{equation}\label{operador}
(-\Delta)^s u(x) = C_{N,s}\,\mathrm{P.V.} \int_{\mathbb{R}^N} \frac{u(x)-u(y)}{|x-y|^{N+2s}} dy,\qquad x\in \mathbb{R}^N,
\end{equation}
where $C_{N,s}$ is a normalization constant and P.V. means the principal value that is we omitted for brevity}. The integral in \eqref{operador} 
has to be understood in the principal value sense, that is, as the limit as $\epsilon\to 0$ of the 
same integral taken in {$\mathbb{R}^N\setminus B_\epsilon(x)$, i.e, the complementary of the ball of center $x$ and radius $\epsilon$}.  It is clear that the fractional Laplacian operator is well defined for functions that belong, for instance, to $\mathcal{L}_{2s}(\mathbb{R}^N)\cap \mathcal{C}^{1,1}_{loc}$ where
$$\mathcal{L}_{2s}(\mathbb{R}^N):=\{u:\mathbb{R}^{N}\to \mathbb{R}:\, \int_{\mathbb{R}^{N}}\frac{|u(x)|}{1+|x|^{N+2s}}<\infty\}.$$

In the case $s=1$ and $\alpha>-2$, the equation (\ref{P}) corresponds to the critical H\'enon equation in $\mathbb{R}^N$ (see \cite{He, Gladiali-Grossi-Neves}). {{Since  there are no solutions to the equation \eqref{P}  in $\mathcal{L}_{2s}(\mathbb{R}^N)$ with $p \in (1,p_{\alpha,s}^*)$ instead $p_{\alpha,s}^*$  (see \cite[Theorem 1.1]{Barrios-Quaas}), then (\ref{P}) is usually called the critical fractional H\'enon equation.}}
\bigskip

The critical Lane-Emden-Folwer equation (when $\alpha=0$) {{and in general the critical}} H\'enon equation (case $\alpha\neq 0$) are connected with optimal constants in some Hardy-Littlewood-Sobolev type inequalities, mass transport, and concentration phenomena, see \cite{Davila-DelPino-Sire, Gladiali-Grossi-Neves, Barrios-Quaas}, and the references therein. When $\alpha=0$ see also the works of Aubin \cite{Au}, Talenti \cite{CGS} and Caffarelli, Gidas and Spruck \cite{CGS} for the case $s=1$, and those of Lieb (\cite{Lieb}), Carlen and Loss (\cite{CL}), Frank and Lieb (\cite {FL}, \cite{FL1}), Chen, Li and Ou (\cite{CLO}), Li (\cite{Li}), and Li and Zhu (\cite{LZ}) when $0<s<1$.

\bigskip

To start with our main results, let $U_{\alpha,s}$ be the {{positive radially symmetric}} bubble solution of (\ref{P}) found in \cite{Barrios-Quaas} or any other positive radially symmetric solution of (\ref{P}). 
Henceforth we will assume  
$$
\left\{\begin{array}[c]{lll}
-2s <\alpha &\quad \mbox{if } \frac{1}{2}\leq s<1,\medskip\\
-2s<\alpha< \frac{2s(N-1)}{1-2s} &\quad \mbox{if }0<s<\frac{1}{2}.
\end{array}\right.
$$ 
The existence results in \cite{Barrios-Quaas} require that in the case $0<s<\frac{1}{2}$,  $p^*_{\alpha,s}$ is below the critical exponent in dimension one, that is possible only if $\alpha$ satisfies the extra condition $-2s<\alpha< \frac{2s(N-1)}{1-2s}$.
Existence or nonexistence in the case $\alpha\geq \frac{2s(N-1)}{1-2s}$ is still nowadays an open problem.\bigskip 

Consider now the linear operator associated to problem (\ref{P}), given by  
\begin{equation}\label{Ele}
\mathfrak{L}_{U_{\alpha,s}}u:=((-\Delta)^s{+}V)u=(-\Delta)^s u - p_{\alpha,s}^* |x|^{\alpha} U_{\alpha,s}^{p_{\alpha,s}^*-1}u
\end{equation}
where 
\begin{equation}\label{Uve} 
V=V(x):={-p_{\alpha,s}^* |x|^{\alpha}} U_{\alpha,s}^{p_{\alpha,s}^*-1}\leq 0,\quad x\in\mathbb{R}^{N}. 
\end{equation}
Our first main results is

\begin{theorem}\label{main}
Let $0<s<1$, $\alpha>-2s$ and $N>2s$. 
The linearized operator $\mathfrak{L}_{U_{\alpha,s}}$ given in {{(\ref{Ele}) acting}} on $L^{2}(\mathbb{R}^{N})$ is non-degenerate, i.e., its kernel is given by
$$
{\rm Ker}\,\mathfrak{L}_{U_{\alpha,s}}= \,\mathrm{span} \{z\},
$$
where
\begin{equation}\label{ZZ}
z:=\frac{N-2s}{2}U_{\alpha,s} +x\cdot \nabla U_{\alpha,s}=\frac{\partial U^\lambda_{\alpha,s}}{\partial \lambda}
\end{equation}
at $ \lambda=1 $ and $U^\lambda_{\alpha,s}(x)=\lambda^{\frac{N-2s}{2}}U_{\alpha,s}(\lambda x)$.
\end{theorem}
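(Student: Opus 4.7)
The plan is to pass to the Caffarelli--Silvestre extension so that $(-\Delta)^s$ is replaced by a local degenerate elliptic operator on $\mathbb{R}^{N+1}_+$, and then exploit the rotational symmetry of $U_{\alpha,s}$ and of the weight $|x|^\alpha$ by decomposing any $\phi\in\mathrm{Ker}\,\mathfrak{L}_{U_{\alpha,s}}$ into spherical harmonics, $\phi(r\omega)=\sum_{k\geq 0}\phi_k(r)\,Y_k(\omega)$. Because $\mathfrak{L}_{U_{\alpha,s}}$ preserves each angular sector, every component $\phi_k$ lies in the kernel of a reduced operator $\mathfrak{L}_{U_{\alpha,s}}^{(k)}$ acting only on the radial variable, and the theorem reduces to showing $\mathrm{Ker}\,\mathfrak{L}_{U_{\alpha,s}}^{(0)}=\mathrm{span}\{z\}$ together with $\mathrm{Ker}\,\mathfrak{L}_{U_{\alpha,s}}^{(k)}=\{0\}$ for every $k\geq 1$.

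For the radial sector $k=0$, the one-parameter dilation invariance of \eqref{P} under $U^\lambda_{\alpha,s}(x)=\lambda^{(N-2s)/2}U_{\alpha,s}(\lambda x)$ immediately places $z=\partial_\lambda U^\lambda_{\alpha,s}|_{\lambda=1}$ in $\mathrm{Ker}\,\mathfrak{L}_{U_{\alpha,s}}^{(0)}$. To show this is the entire radial kernel I would produce a second linearly independent solution of the reduced radial problem (for instance by a reduction-of-order procedure in the extension variables or via the integral reformulation using the Riesz potential acting on radial functions) and use the known asymptotics of $U_{\alpha,s}$ --- bounded at the origin and decaying like $|x|^{-(N-2s)}$ at infinity --- to rule out that this second solution can belong to $L^2(\mathbb{R}^N)$.

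For $k\geq 1$ the reduced extended problem picks up the additional nonnegative term $\frac{k(k+N-2)}{r^2}y^{1-2s}\psi_k$, so the natural strategy is a strict monotonicity-in-$k$ argument combined with a Hardy-type inequality adapted to the H\'enon weight, showing that this extra positive contribution eliminates every nontrivial kernel element as soon as $k\geq 1$. The delicate case, and the place where the H\'enon weight $|x|^\alpha$ genuinely enters, is $k=1$: in the unweighted critical equation the translation modes $\partial_{x_i}U_{0,s}$ would belong to the kernel, whereas a direct computation using \eqref{P} yields $\mathfrak{L}_{U_{\alpha,s}}(\partial_{x_i}U_{\alpha,s})=\alpha|x|^{\alpha-2}x_i\,U_{\alpha,s}^{p_{\alpha,s}^*}$, which is not identically zero. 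I expect the hardest step to be converting this quantitative loss of translation symmetry into a sharp exclusion of kernel elements at the $k=1$ level, which will likely require either a Pohozaev-type identity in the extension or a careful spectral comparison built from the sign-changing structure of $z$ and the strict positivity of the $k\geq 1$ angular correction.
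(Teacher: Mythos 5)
Your plan diverges from the paper's and, more importantly, two of its steps do not survive scrutiny. First, in the radial sector your proposed mechanism for showing that $z$ exhausts the kernel is a reduction-of-order argument producing ``a second linearly independent solution'' to be excluded by the decay of $U_{\alpha,s}$. That is an ODE device: for $0<s<1$ the radial kernel equation is a genuinely nonlocal equation, its solution set is not two-dimensional, and neither the Caffarelli--Silvestre extension nor the Riesz-potential reformulation restores a Wronskian/reduction-of-order structure. This is precisely the obstruction the paper is built to circumvent: it performs the Emden--Fowler change of variables \eqref{U_transformada}, reducing the radial problem to the one-dimensional operator $\widehat{\mathfrak{L}}$ of \eqref{L_transformado} (Lemma \ref{lema4}), uses that the solution has Morse index one (Lemma \ref{lanuestravale}), splits $L^2(\mathbb{R})$ into even and odd parts, handles the odd part by a Perron--Frobenius/simplicity argument (Lemma \ref{final}), and kills the even part by combining a Courant-type nodal bound for the Hardy-weighted singular eigenvalue problem in the extension (Propositions \ref{prop53} and \ref{clave}) with a Fredholm-orthogonality contradiction built from the invariant $\partial \widehat{U}_{\alpha,s}/\partial p^*_{\alpha,s}$. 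Note also that the Morse-index-one (ground state) information is an essential input (see Remark \ref{importante} ii)); your radial argument uses no such spectral information, and decay asymptotics alone cannot replace it.

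Second, your treatment of the angular sectors $k\ge 1$ aims at proving something stronger than the paper does, and stronger than what is true uniformly in $\alpha$. The theorem here is a non-degeneracy result \emph{in the radial setting} (as the abstract states, and as the reduction to Theorem \ref{nodegenerancia_extendido} on $L^2(\mathbb{R})$ makes clear): the paper never analyzes non-radial perturbations. In the local case $s=1$, Gladiali--Grossi--Neves show that at specific values of $\alpha$ the linearization of the critical H\'enon equation acquires kernel elements in higher spherical-harmonic sectors -- this is exactly what produces their bifurcation of non-radial solutions -- so a blanket claim that the positive term $k(k+N-2)/r^2$ eliminates all kernel elements for every $k\ge 1$ and every $\alpha>-2s$ cannot be established; the intuition fails for large $\alpha$, where $p^*_{\alpha,s}$ and the potential strength grow. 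Your computation $\mathfrak{L}_{U_{\alpha,s}}(\partial_{x_i}U_{\alpha,s})=\alpha|x|^{\alpha-2}x_i\,U_{\alpha,s}^{p^*_{\alpha,s}}$ is correct and does show the translation modes leave the kernel when $\alpha\neq 0$, but it does not preclude other kernel elements in the $k\ge1$ sectors at exceptional $\alpha$. To align with the statement actually proved, you should restrict the kernel analysis to radial (equivalently, after Emden--Fowler, one-dimensional) perturbations and supply a non-ODE mechanism of the type the paper uses for the even and odd components.
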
\medskip

Notice that the fact $\mathfrak{L}_{U_{\alpha,s}} z=0$ may be proved by using
$$
(-\Delta)^s (x \cdot \nabla u)=2s(-\Delta)^su+x\cdot \nabla ((-\Delta)^su).
$$ 

The previous result extends in the radial setting \cite[Theorem 1.1]{Davila-DelPino-Sire}, where the case  $0<s<1$ and $\alpha=0$ was considered, and \cite[Theorem 1.3]{Gladiali-Grossi-Neves}, where the case $s=1$ and $\alpha>0$ was studied. 
In \cite{Gladiali-Grossi-Neves},  the linear non-degeneracy for the case  $s=1$ and $\alpha=0$ was obtained by using the results of Rey given in \cite{Roy} (see \cite[Lemma 3.1.2]{Ireneo} for an alternative proof)  via a change of variable. These results can be extended  directly to the case  $s=1$ and $\alpha>-2$. 
Notice that, since our computations are stable when $s\to 1$, the same results may be obtained directly for the case $s=1$ and $\alpha>-2$.\bigskip

The main tool to prove the non-degeneracy result is a Emden-Fowler type transformation which allows us to study the problem (\ref{P})  through an alternative problem given by the following nonlinear Schr\"odinger type equation
\begin{equation}\label{transformado}
\mathcal{T}_{s}v+ \mathcal{A}_{s,N} v = v^{p^*_{\alpha,s}}\quad \mbox{ in }\mathbb{R},
\end{equation}
where the operator  $\mathcal{T}_{s}$ is given by {{
\begin{equation}\label{TT}
\mathcal{T}_{s} v(\kappa)= C_{N,s} \int_{\mathbb{R}} (v(\kappa))-v(\tau))\mathcal{K}(\kappa-\tau)\,d\tau,\quad \kappa \in\mathbb{R},
\end{equation}
with}} 
\begin{equation}\label{kernel}
\mathcal{K}(t)= \,e^{-t\frac{N+2s}{2}} \int_{\mathbb{S}^{N-1}} \frac{1}{ |1+e^{-2t} -2e^{-t}\langle \theta, \vartheta\rangle|^{\frac{N+2s}{2}}}d\vartheta,\quad t\in\mathbb{R},
\end{equation}
and the constant $\mathcal{A}_{s,N}$ is defined by{{
\begin{equation}\label{AA}
\mathcal{A}_{s,N}=C_{N,s} \,\int_0^{\infty} \int_{\mathbb{S}^{N-1}} \rho^{N-1}\frac{1-\rho^{\beta}}{|1+\rho^2-2\rho\langle \theta, \vartheta\rangle |^{\frac{N+2s}{2}}}d\vartheta\,d\rho,
\end{equation}
(see \cite{Manuel, beckner, Frank-Seiringer,  Herbst}). }}Note that  in the case $s=1$ one has $\mathcal{T}_{1}u=-u''$ and $\mathcal{A}_{1,N}=((N-2)/2)^2$, but $\mathcal{T}_{s}u\not=(-\Delta)_\mathbb{R}^su$ for $0<s<1$. 
As we mention, to prove the Theorem \ref{main} we first deal with the transformed linearized operator
\begin{equation}\label{L_transformado}
\widehat{\mathfrak{L}}_{\widehat{U}_{\alpha,s}}v:=\mathcal{T}_{s}v+\mathcal{A}_{s,N} v- p^*_{\alpha,s}\widehat{U}_{\alpha,s}^{p^*_{\alpha,s}-1}v,
\end{equation}
where $\widehat{U}_{\alpha,s}$ is the corresponding transformation of the function $U_{\alpha,s}$ under the change of variable, that is
\begin{equation}\label{U_transformada}
{U}_{\alpha,s}(r)=r^{-\frac{N-2s}{2}}\widehat{U}_{\alpha,s}(\ln\, r),\quad \mbox{for all } r>0.
\end{equation}
Therefore our real objective will be to prove the equivalent.\medskip

\begin{theorem}\label{nodegenerancia_extendido}
Let $0<s<1$, $\alpha>-2s$ and $N>2s$. 
The linearized operator $\widehat{\mathfrak{L}}_{\widehat{U}_{\alpha,s}}$ given in \eqref{L_transformado} acting on $L^{2}(\mathbb{R})$ is non-degenerate, i.e., its kernel satisfies
$$
{\rm Ker}\,\widehat{\mathfrak{L}}_{\widehat{U}_{\alpha,s}}=\mathrm{span}\{\widehat{U}'_{\alpha,s}\}.
$$
\end{theorem}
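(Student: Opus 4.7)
First, $\widehat{U}'_{\alpha,s}\in\mathrm{Ker}\,\widehat{\mathfrak{L}}_{\widehat{U}_{\alpha,s}}$ follows from translation invariance of \eqref{transformado}: since the convolution kernel $\mathcal{K}$ in \eqref{TT} depends only on $\kappa-\tau$, the operator $\mathcal{T}_s$ commutes with translations, and differentiating the translated equation in the translation parameter at zero produces $\widehat{\mathfrak{L}}_{\widehat{U}_{\alpha,s}}\widehat{U}'_{\alpha,s}=0$. The decay of $U_{\alpha,s}$ near $0$ and $\infty$ transported through \eqref{U_transformada} places $\widehat{U}'_{\alpha,s}$ in $L^2(\mathbb{R})$.

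For the reverse inclusion, my plan is to recast the kernel equation as the eigenvalue problem $Kw=w$ for the positive compact operator
$$Kw:=(\mathcal{T}_s+\mathcal{A}_{s,N})^{-1}\bigl(p^*_{\alpha,s}\widehat{U}_{\alpha,s}^{p^*_{\alpha,s}-1}w\bigr)$$
on $L^2(\mathbb{R})$, whose positivity follows from $\mathcal{K}>0$ (which gives a positive Green's function for $\mathcal{T}_s+\mathcal{A}_{s,N}$) together with $\widehat{U}_{\alpha,s}>0$. A direct calculation using the equation for $\widehat{U}_{\alpha,s}$ gives $K\widehat{U}_{\alpha,s}=p^*_{\alpha,s}\widehat{U}_{\alpha,s}$, while Step 1 gives $K\widehat{U}'_{\alpha,s}=\widehat{U}'_{\alpha,s}$. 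By Krein--Rutman, the Perron eigenvalue of $K$ is simple with positive eigenfunction, so it must equal $p^*_{\alpha,s}$ and be realized by $\widehat{U}_{\alpha,s}$; hence $1<p^*_{\alpha,s}$ is an eigenvalue strictly below the Perron level. Pairing $\widehat{\mathfrak{L}}_{\widehat{U}_{\alpha,s}}w=0$ against $\widehat{U}_{\alpha,s}$ and using $\widehat{\mathfrak{L}}_{\widehat{U}_{\alpha,s}}\widehat{U}_{\alpha,s}=-(p^*_{\alpha,s}-1)\widehat{U}_{\alpha,s}^{p^*_{\alpha,s}}$ yields the Pohozaev-type orthogonality $\int_{\mathbb{R}}w\,\widehat{U}_{\alpha,s}^{p^*_{\alpha,s}}=0$ for every kernel element $w$.

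The simplicity of the eigenvalue $1$ would then be obtained by exploiting the Kelvin invariance of \eqref{P}: the substitution $u(x)\mapsto|x|^{-(N-2s)}u(x/|x|^2)$ preserves solutions (the exponent balance follows from $(N-2s)p^*_{\alpha,s}=N+2s+2\alpha$) and corresponds via \eqref{U_transformada} to the reflection $\kappa\mapsto-\kappa$. Assuming $\widehat{U}_{\alpha,s}$ is Kelvin-symmetric (which holds for the variational bubble constructed in \cite{Barrios-Quaas}, after a suitable translation of $\kappa$), $K$ preserves the even/odd splitting of $L^2(\mathbb{R})$ about the axis of symmetry. On the odd sector, $\widehat{U}'_{\alpha,s}$ is sign-definite on one half-line since $\widehat{U}_{\alpha,s}$ is bell-shaped (inherited from the radial monotonicity of $U_{\alpha,s}$), so by Krein--Rutman applied to the odd sector, $1$ is the odd Perron eigenvalue of $K$ and is simple there. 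The Morse-index-one property of $U_{\alpha,s}$, together with the Pohozaev orthogonality, should then force any candidate even eigenvector of $K$ at $1$ to vanish.

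The principal obstacle is precisely this last step---excluding a non-sign-definite even eigenvector of $K$ at eigenvalue $1$. In the local case $s=1$, Sturm--Liouville theory on the line guarantees automatic simplicity of every eigenvalue, but for $0<s<1$ the nonlocal nature of $\mathcal{T}_s$ rules out pointwise ODE arguments. I expect the decisive ingredient to be a sharp spectral-gap estimate on the even sector of $K$, combined with the Morse-index-one constraint and the Pohozaev orthogonality; as a fallback, a moving-plane or symmetrization argument in the $\kappa$-variable exploiting the decay and reflection symmetry of $\widehat{U}_{\alpha,s}$ should suffice to close the gap.
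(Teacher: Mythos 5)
Your first two steps are fine and broadly parallel the paper: $\widehat{U}'_{\alpha,s}\in{\rm Ker}\,\widehat{\mathfrak{L}}$ by translation invariance, and the odd sector is handled by a Perron--Frobenius/Krein--Rutman argument on the half-line (the paper does exactly this in Lemma \ref{final}, via the kernel decomposition $\mathcal{K}(\kappa-\tau)$, $\mathcal{K}(\kappa+\tau)$ and the strict monotonicity of $\mathcal{K}$, together with the Morse-index-one fact $\widehat{\lambda}_2\geq 0$ of Lemma \ref{lanuestravale}). But the step you yourself flag as the ``principal obstacle'' --- excluding an even kernel element --- is precisely the core of the proof, and none of your proposed fallbacks closes it. The Morse-index-one constraint only gives $\widehat{\lambda}_2\geq 0$ and does not forbid $\widehat{\lambda}_2=0$ with an even eigenfunction; the orthogonality $\int_{\mathbb{R}}w\,\widehat{U}_{\alpha,s}^{p^*_{\alpha,s}}=0$ is an automatic consequence of $w$ being in the kernel of the self-adjoint operator, so it carries no contradiction by itself; and ``a sharp spectral-gap estimate'' or ``moving planes in $\kappa$'' is not an argument. (Minor additional points: compactness and positivity of your operator $K$ on $L^2(\mathbb{R})$ would also need proof, and the Frank--Lenzmann scaling invariant is unavailable here because the equation is genuinely critical, as the paper stresses.)

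What the paper actually does in the even sector uses two ingredients absent from your proposal. First, through Lemma \ref{lema4} an even kernel element $\widehat{\phi}_2$ corresponds to a radial solution of the singular (Hardy-weighted) eigenvalue problem $\mathfrak{L}\phi_2=\lambda|x|^{-2s}\phi_2$ with $\lambda=0$, and a Courant-type nodal-domain bound for the Caffarelli--Silvestre extension (Propositions \ref{prop53} and \ref{clave}) shows $\phi_2$ changes sign at most twice on $(0,\infty)$, hence $\widehat{\phi}_2$ changes sign exactly once on $\{x>0\}$, say at $r^*$. Second, differentiating the equation with respect to the exponent gives the invariant $v=\partial\widehat{U}_{\alpha,s}/\partial p^*_{\alpha,s}$ with $\widehat{\mathfrak{L}}(v)=\widehat{U}_{\alpha,s}^{p^*_{\alpha,s}}\ln\widehat{U}_{\alpha,s}$; combining this with $\widehat{\mathfrak{L}}(\widehat{U}_{\alpha,s})=(1-p^*_{\alpha,s})\widehat{U}_{\alpha,s}^{p^*_{\alpha,s}}$ produces an explicit element $w$ of the range of $\widehat{\mathfrak{L}}$ whose unique sign change occurs exactly at $r^*$ (using that $\widehat{U}_{\alpha,s}$ is positive and strictly decreasing). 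Then $\int_{\mathbb{R}}\widehat{\phi}_2\,w>0$, contradicting the orthogonality of kernel and range (Fredholm alternative). Without these two ingredients --- the nodal bound for the singular eigenvalue problem and the exponent-differentiation invariant --- your outline cannot conclude, so the proposal has a genuine gap at its decisive step.
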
\medskip

\begin{remark}\label{importante}
i) The solution found in \cite{Barrios-Quaas} has Morse index $N(\widehat{\mathfrak{L}}_{\widehat{U}_{\alpha,s}})=1$, see Lemma \ref{lanuestravale} below.

ii) \noindent It is clear, and important to bear in mind, that Theorem \ref{main} (resp. \ref{nodegenerancia_extendido}) 
it is true for every radially symmetric (resp. even) solution $v$ of \eqref{P} (resp. Q of \eqref{transformado}) 
with $N(\mathfrak{L}_{{U}_{\alpha,s}})=1$ (resp. $N(\widehat{\mathfrak{L}}_{\widehat{U}_{\alpha,s}})=1$) where 
$N(\mathfrak{L}_{{U}_{\alpha,s}})$ (resp. $N(\widehat{\mathfrak{L}}_{\widehat{U}_{\alpha,s}})$) is the Morse index of 
${\mathfrak{L}}_{{U}_{\alpha,s}}$ (resp $\widehat{\mathfrak{L}}_{{\widehat{U}}_{\alpha,s}}$) 
that gives the number 
(counting multiplicity) of strictly negative eigenvalues of ${\mathfrak{L}_{{U}_{\alpha,s}}}$ 
(resp $\widehat{\mathfrak{L}}_{\widehat{U}_{\alpha,s}}$) acting on $L^2(\mathbb{R}^{N})$ (resp. $L^2(\mathbb{R})$). 

iii) $z$ defined in (\ref{ZZ}) corresponds to the Emden-Fowler transform back of $\widehat{U}'_{\alpha,s}$.

iv) Just some days before submitting the present work, by a personal communication by one of the authors, the recent preprint \cite{Musina} was sent to us. We notice that in this work, only for the case {{$-2s<\alpha<0$, a non-degeneracy}} property was proved using a complete different techniques than the ones we developed here.

\end{remark}\medskip

We briefly comment the strategy that we follow to prove Theorems \ref{main} and Theorem \ref{nodegenerancia_extendido}. Our approach mainly follows the guidelines of \cite{Frank-Lenzmann} and \cite{Frank-Lenzmann-Silvestre}, being similar in some aspects byt, of course, it is not based on ODE arguments as occurs in the local case.\bigskip

Notice first that, since $\widehat{U}_{\alpha,s}$ is even, we start by considering the decomposition $L^{2}(\mathbb{R})=L_{even}^{2}(\mathbb{R})\oplus L_{odd}^{2}(\mathbb{R})$. 
If the Morse index $N(\widehat{\mathfrak{L}}_{\widehat{U}_{\alpha,s}})=1$, then the second eigenvalue of the linearized operator $\widehat{\mathfrak{L}}_{\widehat{U}_{\alpha,s}}$ has to be greater or equal than zero. 
The result follows because we first prove that $\widehat{U}'_{\alpha,s}$ is a simple eigenfunction in $L_{odd}^{2}(\mathbb{R})$ by using a Krein-Rutman type theorem to establish the simplicity  and a positivity argument of Perron-Frobenius type in the half-line $(0, \infty)$ directly on the transformed nonlocal operator defined in the half-line. 
We notice that this strategy {{for non-degeneracy}} does not use  semigroup properties that includes the perturbation arguments of \cite{Reed} such  as in \cite{Frank-Lenzmann} and \cite{Frank-Lenzmann-Silvestre}.\bigskip

Secondly, we prove that $L_{even}^{2}(\mathbb{R})$ does not have eigenfunctions with zero eigenvalue. 
We split the proof of this fact in two cases: i) the eigenfunction has one zero in the half-line $(0,+\infty)$, and ii) the eigenfunction has more than one zero in the half-line.
\\In the case i) by using the invariant of the equation  {{$v={\partial \widehat{U}_{\alpha,s}}/{\partial p^*_{\alpha,s}}$,}} that corresponds to a non-homogeneous solution of the linearized equation, we arrive to a contradiction with the Fredholm alternative. This invariant is used in \cite{Felmer-Quaas} to prove uniqueness of a no explicit critical exponent in the fully non-linear version {{of the equation 
\begin{equation}\label{P00} 
-\mathcal{M}(D^2u)=u^{p^*},
\end{equation}
where $\mathcal{M}$ is the local Pucci extremal operator, and $p^*$ is its critical exponent.}} Notice also that in \cite{Frank-Lenzmann} and \cite{Frank-Lenzmann-Silvestre} the invariant used for the standard nonlinear Schr\"odinger equation in $\mathbb{R}$ is the differentiation with respect to the scaling, see (4.4) and (4.5)  in \cite{Frank-Lenzmann} (resp. (7.2) and (7.3) in \cite{Frank-Lenzmann-Silvestre}), that does not hold for the equation \eqref{transformado}, except in the local case. 
This important difference comes from the fact that the equation \eqref{transformado} is really a critical exponent equation more than a subcritical  Schr\"odinger type equation. 

In the case ii) we use the idea of the Courant's nodal domain as in \cite{Frank-Lenzmann} (see also \cite{Frank-Lenzmann-Silvestre}) by using the extended equivalent local problem introduced in \cite{Caf-Sil} (see Propositions \ref{prop53} and \ref{clave}) by analyzing the singular eigenvalues with Hardy weight.\bigskip

As an application of our non-degeneracy result, we establish the following uniqueness result. 
Here we will need to assume that $N>2s$ for all $0<s\leq 1$ since we aim to do a homotopy argument up to $s=1$. 
that obligue us to consider $N\geq 3$. Recall that existence in the case $s=1$ is given in the celebrated work of E. Lieb \cite{Lieb}.\medskip

\begin{theorem}\label{unicidad1} 
{{Let $0<s<1$, $\alpha>-2s$ and $N\geq 3$. If $Q$ is a positive solution of \eqref{transformado} such that $N(\widehat{\mathfrak{L}}_{Q})=1$,  where $\widehat{\mathfrak{L}}_{Q}=\mathcal{T}_{s}+\mathcal{A}_{s,N}- p^*_{\alpha,s}Q^{p^*_{\alpha,s}-1}$, then $Q$ is unique up to translations.}}
\end{theorem}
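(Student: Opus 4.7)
The strategy is a continuation argument in the parameter $s$, with $s=1$ as a base case where uniqueness is classical. By translation invariance of \eqref{transformado}, every positive solution $Q$ may be normalized by placing its maximum at $0$, making it even, so it suffices to prove uniqueness among positive even Morse-index-$1$ solutions. At $s=1$ the equation reduces to $-v'' + ((N-2)/2)^2 v = v^{p^*_{\alpha,1}}$ on $\mathbb{R}$, which is the Emden--Fowler transform of the local critical H\'enon equation $-\Delta U = |x|^\alpha U^{p^*_{\alpha,1}}$ in $\mathbb{R}^N$ (using $N\geq 3$). Uniqueness (up to scaling) of its positive radial solutions is classical --- Lieb for $\alpha=0$, extended to $\alpha>-2$ by the change-of-variable argument referenced in the introduction --- so at $s=1$ there is exactly one such solution up to translation.

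For the continuation I would work in an appropriate Banach space $X$ of even, positive, decaying functions on $\mathbb{R}$, and define $F:X\times(0,1]\to X^{*}$ by
\[
F(Q,s) := \mathcal{T}_s Q + \mathcal{A}_{s,N}\,Q - Q^{p^*_{\alpha,s}}.
\]
At any positive Morse-index-$1$ solution $Q$ at a given $s$, Remark \ref{importante}(ii) entitles us to invoke Theorem \ref{nodegenerancia_extendido}: $\mathrm{Ker}\,\widehat{\mathfrak{L}}_Q = \mathrm{span}\{Q'\}$ is odd, so $D_Q F(Q,s)=\widehat{\mathfrak{L}}_Q$ is invertible on the even subspace. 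The implicit function theorem then yields a unique smooth local branch $s\mapsto Q_s$ through $Q$; continuity of the spectrum together with non-degeneracy ensures that no eigenvalue crosses $0$ along the branch, so the Morse index stays equal to $1$ and the hypotheses of the theorem propagate as long as a priori bounds persist.

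To upgrade this to a global statement I would combine openness (the IFT) with closedness (compactness of the solution set in $X$ along compact subintervals of $s$). Equivalently, one can frame the argument via a Leray--Schauder degree counting positive even solutions with signs $(-1)^{\text{Morse index}}=-1$; this degree is homotopy-invariant in $s$, equals $-1$ at $s=1$ by the base case, and hence equals $-1$ for all $s\in(0,1]$, forcing exactly one positive even Morse-index-$1$ solution. The main obstacle is the compactness step: one has to establish uniform $L^\infty$ and lower bounds, and uniform decay at $\pm\infty$, on the branch $Q_s$ over compact sub-intervals of $s$, ruling out blow-up, vanishing and energy concentration, while controlling the fact that the operator $\mathcal{T}_s$ itself varies with $s$ (degenerating to $-d^2/dx^2$ at $s=1$). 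A careful analysis of the kernel \eqref{kernel} and of the constant $\mathcal{A}_{s,N}$ as $s$ varies, combined with standard bootstrap for the transformed equation, should deliver these estimates and close the continuation argument.
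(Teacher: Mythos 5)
Your overall scheme matches the paper's: continuation in $s$, with invertibility of the linearization on the even subspace furnished by Theorem~\ref{nodegenerancia_extendido}, a priori bounds to push the branch to $s=1$, and classical uniqueness of the local critical H\'enon bubble at $s=1$ to close the argument. But your reduction to the even case contains a genuine gap: translating $Q$ so its maximum sits at the origin does \emph{not} make $Q$ even. Evenness is established in the paper by invoking the moving-plane symmetry result of Felmer--Wang (using the fast-decay estimates from \cite{Barrios-Quaas}) to show that any positive solution of \eqref{cc} is symmetric about \emph{some} point, and then translating. This step is not a consequence of translation invariance alone; it is precisely what justifies replacing $N(\widehat{\mathfrak{L}}_Q)=1$ by $N_{even}(\widehat{\mathfrak{L}}_Q)=1$ (together with Remark~\ref{auxi}), and without it the IFT step on the even subspace does not get off the ground.

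The global step you describe is also more speculative than what the paper does. A Leray--Schauder degree count ``$(-1)^{\text{Morse index}}$'' is plausible but would need its own justification (admissibility of the homotopy, properness, sign of the degree in an infinite-dimensional nonlocal setting); the paper avoids all of that and argues directly: it assumes two distinct positive solutions at $s_0$, extends each to a $C^1$ branch on $[s_0,1)$ (Proposition~\ref{unicidad_2}), shows both branches converge to the unique $Q_1$ at $s=1$, and then applies the implicit function theorem \emph{backward} at $Q_1$ (which is non-degenerate by \cite{Gladiali-Grossi-Neves}) to force the two branches to coincide near $s=1$, contradicting the local uniqueness in Lemma~\ref{unicidad_1}. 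You correctly flag the a priori bounds as the technical crux, but the mechanism is not ``a careful analysis of the kernel'': the paper runs a Gidas--Spruck blow-up, rescaling $z_n(x) = M_n^{-1}Q_{s_n}(M_n^{-(p-1)/2s_n}x)$ with $M_n = \|Q_{s_n}\|_{L^\infty}$, to obtain in the limit a bounded positive solution of $(-\Delta)^s z_\infty = z_\infty^p$ in $\mathbb{R}$ with $z_\infty(0)=1$, contradicting the nonlocal Liouville theorems of Felmer--Quaas and Zhuo--Chen--Cui--Yuan. The positivity of $Q_s$ along the branch, which you take for granted, is likewise proved separately (Lemma~\ref{positividad}) by a direct test-function argument, not by spectral continuity.
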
\medskip

From the proof of the previous result, one deduces that if $Q$ is even, then it is the unique solution. 
\bigskip

In the original variables the previous result can be read as\medskip 

\begin{theorem}\label{unicidad_original} 
{{Let $0<s<1$, $\alpha>-2s$ and $N\geq 3$. If $U$ is a positive radially symmetric solution of \eqref{P} such that $N({\mathfrak{L}_{U}})=1$,  where $\mathfrak{L}_{U}=(-\Delta)^s  - p_{\alpha,s}^* |x|^{\alpha} U^{p_{\alpha,s}^*-1}$,  then $U$ is unique up to scaling.}}
\end{theorem}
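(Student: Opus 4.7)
The plan is a direct reduction to Theorem \ref{unicidad1} via the Emden-Fowler transformation (\ref{U_transformada}). Given a positive radially symmetric $U$ solving (\ref{P}) with $N(\mathfrak{L}_U)=1$, I set $Q(\kappa):=e^{\kappa(N-2s)/2}\,U(e^{\kappa})$, so that $U$ and $Q$ are related by (\ref{U_transformada}). Then $Q>0$ and, by the same computation that leads to (\ref{transformado}), $Q$ solves $\mathcal{T}_s Q+\mathcal{A}_{s,N}Q=Q^{p^{*}_{\alpha,s}}$ on $\mathbb{R}$.

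To apply Theorem \ref{unicidad1} I need to produce $N(\widehat{\mathfrak{L}}_Q)=1$. Since the potential $|x|^{\alpha} U^{p^{*}_{\alpha,s}-1}$ is radial, $\mathfrak{L}_U$ commutes with rotations and decomposes via spherical harmonics into angular sectors. In the radial sector, the map $v(r)=r^{-(N-2s)/2}\widehat{v}(\ln r)$ intertwines the quadratic forms exactly, namely $\langle \mathfrak{L}_U v,v\rangle_{L^{2}(\mathbb{R}^{N})}=|\mathbb{S}^{N-1}|\,\langle \widehat{\mathfrak{L}}_Q \widehat{v},\widehat{v}\rangle_{L^{2}(\mathbb{R})}$, so the min-max principle delivers $N(\mathfrak{L}_U|_{\mathrm{rad}})=N(\widehat{\mathfrak{L}}_Q)$. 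Testing $\mathfrak{L}_U$ against $U$ itself produces $\langle \mathfrak{L}_U U,U\rangle=(1-p^{*}_{\alpha,s})\int|x|^{\alpha} U^{p^{*}_{\alpha,s}+1}\,dx<0$, which forces the unique negative eigenvalue of $\mathfrak{L}_U$ into the radial sector. Combining, $N(\widehat{\mathfrak{L}}_Q)=1$.

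Theorem \ref{unicidad1} now applies: for any two such solutions $Q_1,Q_2$ there is a translation $Q_2(\kappa)=Q_1(\kappa+c)$ with $c\in\mathbb{R}$. Undoing (\ref{U_transformada}) with $\lambda:=e^{c}$ yields $U_2(x)=\lambda^{(N-2s)/2}U_1(\lambda x)=U_1^{\lambda}(x)$, which is exactly uniqueness up to the scaling (\ref{ZZ}).

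The main obstacle is the quadratic-form identity invoked in the middle step. One must check that both the nonlocal term $\int v\,(-\Delta)^{s}v\,dx$ and the potential term $\int |x|^{\alpha} U^{p^{*}_{\alpha,s}-1}v^{2}\,dx$ transform cleanly under (\ref{U_transformada}); as in the derivation of (\ref{transformado}) for the nonlinear equation, it is precisely because $p^{*}_{\alpha,s}$ is the critical exponent that all powers of $r$ conspire to be absorbed by the logarithmic substitution $\kappa=\ln r$, leaving an unweighted quadratic form on $L^{2}(\mathbb{R})$.
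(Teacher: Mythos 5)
Your proof is correct and takes essentially the same route as the paper's: apply the Emden--Fowler transform and reduce to Theorem \ref{unicidad1}. The paper's own argument is terser --- it simply invokes Lemma \ref{lema4} to conclude $N(\widehat{\mathfrak{L}}_Q)=1$ --- whereas you make two steps explicit that are only implicit there: first, that under $v(r)=r^{-(N-2s)/2}\widehat{v}(\ln r)$ the quadratic forms $\langle\mathfrak{L}_U v,v\rangle$ and $\langle\widehat{\mathfrak{L}}_Q\widehat{v},\widehat{v}\rangle$ coincide up to the factor $|\mathbb{S}^{N-1}|$ (the criticality of $p^{*}_{\alpha,s}$ being exactly what makes the $r$-powers cancel, as you note); and second, that the single negative direction of $\mathfrak{L}_U$ must lie in the radial sector, which you extract by testing with $U$. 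Both are worth recording. One caution on your phrasing: the $L^{2}(\mathbb{R}^{N})$ norm does \emph{not} correspond to $L^{2}(\mathbb{R})$ under the change of variable --- it is the weighted norm $L^{2}(\mathbb{R}^{N},|x|^{-2s})$ that does, which is precisely why the paper works with the singular eigenvalue problem $\mathfrak{L}v=\lambda|x|^{-2s}v$ throughout Section~2 and in Lemma \ref{lema4}. Your conclusion still stands because the Morse index is a property of the quadratic form alone (the maximal dimension of a subspace on which $\mathcal{R}<0$) and is therefore insensitive to which positive inner product sits in the denominator of the Rayleigh quotient; but ``the min-max principle delivers $N(\mathfrak{L}_U|_{\mathrm{rad}})=N(\widehat{\mathfrak{L}}_Q)$'' slightly obscures that the eigenvalues themselves are not preserved, only their signs and count. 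With that clarification the argument is tight.
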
\medskip

The proof of the Theorem \ref{unicidad1} borrows some ideas developed in \cite[Section 8]{Frank-Lenzmann-Silvestre} (see also \cite[Section 5]{Frank-Lenzmann}) by using Theorem \ref{nodegenerancia_extendido},  the Implicit Function Theorem and a global continuation property. 
Let us denote by $Q_s$ a solution of 
\begin{equation}\label{cc}
\mathcal{T}_{s}Q_s+ \mathcal{A}_{s,N} Q_s = |Q_s|^{p-1}Q_s\quad \mbox{ in }\mathbb{R},
\end{equation} 
where $p<\frac{1+2s}{1-2s}$.
Given a fixed $0<s_0<1$ and a solution $Q_{s_0}$ with $N_{even}(\widehat{\mathfrak{L}}_{Q_{s_0}})=1$ the scheme of the strategy will be
\begin{itemize}
\item To construct a locally unique branch $Q_s$ for $s$ {{close to $s_0$, with $s_0\leq s<1$}}.
\item To continue the local branch until $s=1$ as long as $Q_{s_{0}}>0$ by using a priori bounds.
\item To get the existence and uniqueness of the branch $Q_s$, that starts from a solution $Q_{s_0}$ by using the global uniqueness and non-degeneracy of the problem with $s=1$. We notice that the non-degeneracy in the local case comes from the results of \cite{Gladiali-Grossi-Neves} (that hold also for $\alpha>-2$) and also by the use of the uniqueness Poincar\'e-Bendixson Theorem and our non-degeneracy result (valid for $s=1$ yet).
\end{itemize}\medskip

It is worth to mention that in our proofs we may avoid the assumption on the potential $V\in C^\beta(\mathbb{R}^N)$, (see (V2) in \cite[Section 2]{Frank-Lenzmann-Silvestre}), that allows us to get the results also in the case $-2s<\alpha<0$. 
Moreover we can also avoid the homotopy argument in the potential $V$ (see \cite[Section 6]{Frank-Lenzmann-Silvestre}). 
Another, and maybe the main one, difference between our strategy and the one developed in \cite{Frank-Lenzmann, Frank-Lenzmann-Silvestre} is that the qualitative properties of the branch of solutions are obtained in a different and more direct way. 
In fact, the a priori bounds for the branch of solutions are obtained here by the Gidas-Spruck blow-up method. Remarkable is that the original problem does not have a priori bounds. 
The second main difference is that the positive preserving properties of the branch are obtained directly from the equation without using semigroups type arguments as occurs in \cite{Frank-Lenzmann} and \cite{Frank-Lenzmann-Silvestre}.
{We conclude this introduction mentioning that other uniqueness results for nonlocal equations can be found in  \cite{del Mar} and the references therein.}

The rest of the paper is organized as follows: In Section 2 we study the singular and non-singular eigenvalue and we get the proof our non-degeneracy result. 
Section 3 deals with the uniqueness issue. 
We remark here that through this paper we will denote by $C$ to a positive constant that may change from line to line and $B_R$ will be the ball of radius $R$ centered at the origin. 

\section{Non-degeneracy}

\subsection{Some Preliminaries}

We start by recalling the following result in \cite{Barrios-Quaas}, where the precise notion of solutions and the regularity 
issue between \eqref{transformado} and \eqref{P} were studied.

\begin{theorem} If
\begin{enumerate}
\item[i)] $\frac{1}{2}\leq s<1$ and $\alpha >-2s$, or
\item[ii)] $0<s<\frac{1}{2}$ and  $-2s<\alpha< \frac{2s(N-1)}{1-2s}$, 
\end{enumerate}
then the problem \eqref{P} has a nonnegative variational radially symmetric solution $ {U}_{\alpha,s}$. 
Moreover, since $f(t) = t^{p^*_{\alpha,s}}$ is a H\"older function, the solution is classical and, in fact, is positive. 
Furthermore, all {{positive radially symmetric}} solutions $u$ of \eqref{P} are fast decay, that is, there exist constants $c_i>0$, $i=1,\,2$, such that 
\begin{equation}\label{fastdecay}
c_1 r^{-N+2s}\leq u(r)\leq c_2 r^{-N+2s}, {\quad r\ge 1.}
\end{equation}
\end{theorem}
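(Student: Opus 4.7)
The statement splits into three parts: existence of a variational radial solution, regularity with positivity, and the two-sided decay \eqref{fastdecay}. I would address them in that order.

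For existence, the plan is to work with the weighted Sobolev--H\'enon inequality
$$
\Bigl(\int_{\mathbb{R}^N}|x|^{\alpha}|u|^{2^{*}_{s,\alpha}}\,dx\Bigr)^{2/2^{*}_{s,\alpha}}\ \le\ C\,\|(-\Delta)^{s/2}u\|_{L^2(\mathbb{R}^N)}^{2},
$$
and minimize the associated quotient on the radial subspace of $\dot H^{s}(\mathbb{R}^N)$. The role of the hypotheses on $\alpha$ is to restore compactness there: when $s\ge 1/2$ the H\'enon weight kills translations and leaves only the rescaling $u\mapsto \lambda^{(N-2s)/2}u(\lambda\cdot)$, which I would fix by a normalisation of the minimizing sequence; when $s<1/2$ the bound $\alpha<2s(N-1)/(1-2s)$ is precisely what keeps $p^{*}_{\alpha,s}$ strictly below the one-dimensional fractional critical exponent $\tfrac{1+2s}{1-2s}$, and then after the Emden--Fowler reduction \eqref{U_transformada} the corresponding minimization for \eqref{transformado} on even functions of $\mathbb{R}$ is genuinely subcritical, so a minimizing sequence converges strongly. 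Either way one obtains a nonnegative radial critical point $U_{\alpha,s}$ of \eqref{P}, normalised by an appropriate dilation.

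With a weak radial solution in hand, regularity follows from the standard fractional toolbox: a Moser/De Giorgi iteration adapted to the H\'enon weight yields local boundedness, and Schauder-type estimates for $(-\Delta)^s$ with H\"older data place $U_{\alpha,s}$ in $C^{2s+\gamma}_{\mathrm{loc}}(\mathbb{R}^N\setminus\{0\})$; strict positivity then follows from the strong maximum principle for $(-\Delta)^s$ applied to the nonnegative classical solution. For the decay claim \eqref{fastdecay}, I would first use the integral representation
$$
u(x)=c_{N,s}\int_{\mathbb{R}^N}\frac{|y|^{\alpha}u(y)^{p^{*}_{\alpha,s}}}{|x-y|^{N-2s}}\,dy,
$$
restricted to a small ball around the origin where the integrand is bounded below, to get the lower bound $u(r)\gtrsim r^{-(N-2s)}$ for $r\ge 1$. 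For the matching upper bound I would pass to the transformed profile $\widehat U_{\alpha,s}$ solving \eqref{transformado} and exploit that the nonlinearity $\widehat U_{\alpha,s}^{\,p^{*}_{\alpha,s}}$ is integrable on $\mathbb{R}$ (thanks to the finite energy built into the variational construction); combined with the positivity of $\mathcal{A}_{s,N}$ and a bootstrap on \eqref{transformado}, this forces $\widehat U_{\alpha,s}(\kappa)\to 0$ at a rate that, translated back through $r=e^{\kappa}$, yields $u(r)\le c_2 r^{-(N-2s)}$.

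The most delicate point is clearly the existence step: since $p^{*}_{\alpha,s}$ is critical for the weighted embedding, a direct concentration--compactness analysis loses mass at scales concentrating at the origin. The radial restriction together with the sharp range of $\alpha$ --- and, in the regime $s<1/2$, the extra one-dimensional reduction --- is what turns the problem into a compact one after fixing the residual scaling, and this is the key observation that makes the variational argument work.
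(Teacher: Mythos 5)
This theorem is not proved in the present paper; it is recalled verbatim from \cite{Barrios-Quaas}, where, as the authors indicate, the solution is constructed through the Emden--Fowler reduction \eqref{U_transformada} to the transformed equation \eqref{transformado}. So there is no internal proof to compare against, and your proposal must be judged on its own merits and against the route sketched in the surrounding text.

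Your sketch heads in the right direction but diverges from the cited strategy in two places worth flagging. First, on existence: you treat $s\ge\tfrac12$ by minimizing the weighted Sobolev quotient directly on $\dot H^s_{\mathrm{rad}}$ and claim to restore compactness ``by a normalisation of the minimizing sequence.'' That step is too quick. As your own computation of $2^*_{s,\alpha}=\tfrac{2(N+\alpha)}{N-2s}$ shows, the quotient is exactly invariant under $u\mapsto\lambda^{(N-2s)/2}u(\lambda\cdot)$, and the H\'enon weight does not break this dilation symmetry; normalizing a minimizing sequence fixes the scale but does not by itself prevent loss of mass at that scale. You would still need a genuine concentration--compactness step or, better, do what \cite{Barrios-Quaas} does for \emph{all} $s\in(0,1)$: pass to the transformed equation \eqref{transformado}, which is strictly subcritical in dimension one (for $s\ge\tfrac12$ there is no $1$d critical exponent at all, and for $s<\tfrac12$ your computation $\alpha<\tfrac{2s(N-1)}{1-2s}\iff p^*_{\alpha,s}<\tfrac{1+2s}{1-2s}$ is exactly right), and apply the concentration--compactness/Ekeland machinery that the paper references via \cite{Felmer}. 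The two-regime split in your write-up is therefore unnecessary and, in the $s\ge\tfrac12$ branch, leaves a real gap.

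Second, on the decay upper bound: ``positivity of $\mathcal{A}_{s,N}$ plus a bootstrap'' is not enough to isolate the exponent $(N-2s)/2$ for $\widehat U_{\alpha,s}$. The rate is fixed by the fact that the \emph{specific} constant $\mathcal{A}_{s,N}$ in \eqref{AA} makes $e^{\pm\frac{N-2s}{2}\kappa}$ the homogeneous solutions of $\mathcal{T}_s+\mathcal{A}_{s,N}$ (these are the Emden--Fowler images of the radial $s$-harmonic functions $1$ and $|x|^{-(N-2s)}$). A bootstrap can upgrade ``$\widehat U\to 0$'' to the sharp rate only once this spectral fact about $\mathcal{T}_s+\mathcal{A}_{s,N}$ is in hand, and your sketch does not supply it. Your lower-bound argument via the Riesz potential representation is sound modulo the (nontrivial, but standard) justification that a bounded decaying positive classical solution admits that representation. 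The regularity and positivity steps are fine as stated.
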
 


Let now begin with the study of eigenvalue problems. As was mentioned in the introduction in \cite[Theorem 2.3]{Frank-Lenzmann-Silvestre} was shown that the second radial eigenfunction of operators of the form {{$H=(-\Delta)^s+V$}} changes sign exactly once on the half-line $(0,+\infty)$. 
This result may be regarded as an analog of the classical oscillation bound for classical Sturm-Liouville problems. 
In particular, this optimal oscillation result generalizes the one in \cite{Frank-Lenzmann} to an arbitrary dimension $N > 1$. 
We highlight that  such an oscillation estimate is a central ingredient in the proof of the non-degeneracy of the {{positive radially symmetric solution}} $U_{\alpha,s}$ of the problem (\ref{P}).

Unless otherwise stated, and as long as there is no doubt of what type of operator we are referring to, throughout this section to simplify the notation, we will denote $ {\mathfrak{L}}= {\mathfrak{L}}_{U_{\alpha,s}}$ (resp. $\widehat{\mathfrak{L}}=\widehat{\mathfrak{L}}_{\widehat{U}_{\alpha,s}}$)

In order to prove Theorems \ref{main}-\ref{nodegenerancia_extendido}{\color{black}{, by}} following the ideas of \cite {Frank-Lenzmann, Frank-Lenzmann-Silvestre} we have to do a carefully analysis of the eigenvalues of the linearized operators $\mathfrak{L}$ and $\widehat{\mathfrak{L}}$. Moreover, as we will see, the relationship between the radial eigenfunctions of both operators will be fundamental to conclude the non-degeneracy result \textcolor{black}{(see Lemma \ref{lema4}).}

\subsection{First eigenvalue for the original singular problem}
 

\textcolor{black}{Let us consider $\mathfrak{L}$ defined on the space
$H^s_{\mathrm{rad}}(\mathbb{R}^N)$ endowed with the norm}  
$$
\|u\|^2_{H^s_{\mathrm{rad}}(\mathbb{R}^N )} ={{C_{N,s}}}\iint_{\mathbb{R}^N\times\mathbb{R}^N} \frac{|u(x)-u(y)|^2}{|x-y|^{N+2s}}dx\,dy+\|u\|^2_{L^2(\mathbb{R}^N)}.
$$
We notice that
$$ 
H^s_{\mathrm{rad}}(\mathbb{R}^N )\hookrightarrow L^{p}(\mathbb{R}^N),
$$
with continuous embedding for $1\leq p\leq {{2^*_{s}:=\frac{2N}{N-2s}}}$,\,  $N>2s$, see \cite[Theorem 6.5]{DiNezza-Palatucci-Valdinoci}. 
Moreover, since, by \eqref{fastdecay},  $|U_{\alpha,s}|^{p^*_{\alpha,s}-1}$ is bounded and decays at infinity as $|x|^{-4s-2\alpha}$, by using that $\alpha>-2s$, one gets
\begin{equation}\label{iemb}
\int_{\mathbb{R}^N} |x|^{\alpha} |U_{\alpha,s}|^{p^*_{\alpha,s}-1}\phi^2dx \leq C \int_{\mathbb{R}^N}  
  \frac{\phi^2}{|x|^{2s}}dx <+\infty\quad \mbox{for all }\phi\in H^s_{\mathrm{rad}}(\mathbb{R}^N),
\end{equation}
and, when $\alpha\geq 0$, 
\begin{equation}\label{iemb-2}
\int_{\mathbb{R}^N} |x|^{\alpha} |U_{\alpha,s}|^{p^*_{\alpha,s}-1}\phi^2dx \leq C \int_{\mathbb{R}^N}  
 {\phi^2}dx <+\infty\quad \mbox{for all }\phi\in H^{s}_{\mathrm{rad}} (\mathbb{R}^N).
\end{equation}
%
We consider now the Rayleigh functional associated to $\mathfrak{L}$, given by
\begin{eqnarray}
\mathcal{R}(v)\!\!&\!:=\!\!&\int_{\mathbb{R}^N}|(-\Delta)^{\frac{s}{2}}v|^2 -p^*_{\alpha,s} |x|^{\alpha} |U_{\alpha,s}|^{p^*_{\alpha,s}-1} v^2\,dx\label{Erre}
\\
\!\!&=\!\!&{{C_{N,s}}}\iint_{\mathbb{R}^{2N}}\frac{(v(x)-v(y))^2}{|x-y|^{N+2s}}\, dx\, dy-p^*_{\alpha,s}\int_{\mathbb{R}^N}|x|^{\alpha} |U_{\alpha,s}|^{p^*_{\alpha,s}-1} v^2\,dx\nonumber\\
\!\!&=\!\!&
\langle v,\, \mathfrak{L}v\rangle \nonumber,\quad v\in H^{s}_{\mathrm{rad}} (\mathbb{R}^N),
\end{eqnarray}
and set
\begin{equation}\label{la1}
\lambda_1:=\inf_{\scriptsize
{\begin{array}[c]{cc} 
v \in  H^s_{\mathrm{rad}}(\mathbb{R}^N)\setminus\{0\} 
\end{array}}}
\frac{\mathcal{R}(v)}{\|v\|^2_{L^2(\mathbb{R}^N,|x|^{-2s})}}.
\end{equation}
Notice that  $\lambda_1>-\infty$ due to (\ref{iemb}) and the Hardy-Sobolev inequality
\begin{equation}\label{Hardy}
\mathcal{A}_{N,s}\int_{\mathbb{R}^N}{\frac{u^2}{|x|^{2s}}\,dx}\leq \|(-\Delta)^{\frac{s}{2}}u\|^2_{L^2(\mathbb{R}^{N})},\quad u\in H^s_{\mathrm{rad}}(\mathbb{R}^N),
\end{equation}
where
\begin{equation}\label{lambda_de_hardy}
\mathcal{A}_{N,s}=2^{2s}\frac{\Gamma^2\left(\frac{N+2s}{4}\right)}{\Gamma^2\left(\frac{N-2s}{4}\right)},
\end{equation} 
(see \textcolor{black}{\cite[Remark 2.1]{Manuel}}, \cite[Proposition 2.7]{DelaTorre-DelMar} \textcolor{black}{and \cite[Remark 4.7 iii)]{Barrios-Quaas}}), is an optimal constant that cannot be achieved (see \cite{beckner, Frank-Seiringer, Herbst}). 
We obtain the next result.
\begin{proposition}\label{saleg-1}
The following statements hold
\begin{enumerate}
\item[{\em i})] $\lambda_1<0$.\smallskip
\item[{\em ii})] Every minimizing sequence of (\ref{la1}) has a subsequence which weakly converges in $H^{s}_{\mathrm{rad}}(\mathbb{R}^N)$, and strongly in $L^r(B_R))$ for $1\leq r<2^*_s$ and every $R>0$.\smallskip
\item[{\em iii})] There exists a unique positive minimizer ${\varphi}_{1}$ of (\ref{la1}). 
Moreover, $\lambda_1$ is a simple eigenvalue of $\mathfrak{L}$ and ${\varphi}_1$ is an eigenfunction associated to $\lambda_1$.
\end{enumerate}
\end{proposition}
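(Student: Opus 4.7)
The plan is to treat (\ref{la1}) as the minimization of a Rayleigh quotient with Hardy-type weight, and dispatch (i)--(iii) successively.

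For (i), I test the quotient with $U_{\alpha,s}$ itself: multiplying (\ref{P}) by $U_{\alpha,s}$ and integrating produces
$$\int_{\mathbb{R}^N}|(-\Delta)^{s/2}U_{\alpha,s}|^2\,dx=\int_{\mathbb{R}^N}|x|^{\alpha}U_{\alpha,s}^{p^*_{\alpha,s}+1}\,dx,$$
so plugging into (\ref{Erre}) yields $\mathcal{R}(U_{\alpha,s})=(1-p^*_{\alpha,s})\int_{\mathbb{R}^N}|x|^{\alpha}U_{\alpha,s}^{p^*_{\alpha,s}+1}\,dx<0$ since $p^*_{\alpha,s}>1$. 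The fast decay (\ref{fastdecay}) together with $N>2s$ places $U_{\alpha,s}$ in $H^s_{\mathrm{rad}}(\mathbb{R}^N)\cap L^2(\mathbb{R}^N,|x|^{-2s})$, so $U_{\alpha,s}$ is admissible in (\ref{la1}) and $\lambda_1<0$.

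For (ii), let $\{v_n\}$ be a minimizing sequence normalized by $\|v_n\|_{L^2(|x|^{-2s})}=1$. Estimate (\ref{iemb}) and the constraint show that the potential integral $p^*_{\alpha,s}\int|x|^{\alpha}U_{\alpha,s}^{p^*_{\alpha,s}-1}v_n^2\,dx$ is uniformly bounded, so the identity
$$\|(-\Delta)^{s/2}v_n\|_{L^2}^2=\mathcal{R}(v_n)+p^*_{\alpha,s}\int_{\mathbb{R}^N}|x|^{\alpha}U_{\alpha,s}^{p^*_{\alpha,s}-1}v_n^2\,dx$$
forces $\|(-\Delta)^{s/2}v_n\|_{L^2}$ to stay bounded as well. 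This, coupled with the fractional Sobolev inequality $\|v\|_{L^{2^*_s}}\leq C\|(-\Delta)^{s/2}v\|_{L^2}$, yields weak compactness in $H^s_{\mathrm{rad}}(\mathbb{R}^N)$, and the standard fractional Rellich--Kondrachov theorem delivers strong convergence in $L^r(B_R)$ for $1\leq r<2^*_s$ and each $R>0$.

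For (iii), I pass to the weak limit $v_n\rightharpoonup\varphi_1$. The Dirichlet form is weakly lower semicontinuous, and the potential integral is continuous under the obtained convergence: on $B_R$, strong local convergence suffices, whereas the tail $\int_{|x|>R}|x|^{\alpha}U_{\alpha,s}^{p^*_{\alpha,s}-1}v_n^2\,dx$ is controlled by pairing the decay $|x|^{\alpha}U_{\alpha,s}^{p^*_{\alpha,s}-1}\sim|x|^{-4s-\alpha}$ coming from (\ref{fastdecay}) with the uniform $L^{2^*_s}$-bound via H\"older. The strict inequality $\lambda_1<0$ rules out $\varphi_1\equiv 0$ (otherwise the potential integral along the sequence would vanish and $\liminf\mathcal{R}(v_n)\geq 0$), and the homogeneity of (\ref{la1}) normalizes $\|\varphi_1\|_{L^2(|x|^{-2s})}=1$, so $\varphi_1$ is a minimizer. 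The subadditivity of the Gagliardo seminorm (Kato inequality for the fractional Laplacian) gives $\mathcal{R}(|\varphi_1|)\leq\mathcal{R}(\varphi_1)$, so $|\varphi_1|$ is also a minimizer and satisfies, via Lagrange multipliers, the Euler--Lagrange equation $\mathfrak{L}\varphi_1=\lambda_1|x|^{-2s}\varphi_1$; the strong maximum principle for $(-\Delta)^s$ then rules out interior zeros and produces $\varphi_1>0$. Simplicity follows classically: two linearly independent positive eigenfunctions $\varphi_1,\widetilde{\varphi}_1$ would give a sign-changing combination $w=\varphi_1-c\widetilde{\varphi}_1$ that also minimizes, whose absolute value $|w|$ would again be a positive minimizer by Kato, contradicting the strong maximum principle.

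The principal obstacle sits inside (iii), namely the tail estimate that guarantees continuity of the potential integral along the minimizing sequence and non-vanishing of $\varphi_1$. Because radial functions with only $(-\Delta)^{s/2}v\in L^2$ need not be globally in $L^2(\mathbb{R}^N)$, this step cannot be reduced to a routine Sobolev argument; instead, the radial Strauss-type pointwise decay, the fast decay (\ref{fastdecay}) of $U_{\alpha,s}$, and the fractional Hardy inequality (\ref{Hardy}) have to be combined so that the tail becomes small uniformly in $n$. This is where the specific decay of the bubble solution enters in a crucial way.
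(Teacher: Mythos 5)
Your overall architecture matches the paper's (test with $U_{\alpha,s}$ for (i), direct method plus lower semicontinuity and Fatou for (ii)--(iii), reduction to nonnegative minimizers via the Kato/pointwise inequality), but there are three places where you take a genuinely different route and two of them leave real gaps.

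First, the normalization. You take $\|v_n\|_{L^2(\mathbb{R}^N,|x|^{-2s})}=1$, which makes the bound on the potential term immediate from \eqref{iemb} and lets you avoid the paper's rather involved cutoff argument (the truncation levels $D_k$ combined with the sharp Sobolev constant) that the authors introduce to handle $-2s<\alpha<0$. That is an appealing simplification, but the price is that you only control the Gagliardo seminorm $\|(-\Delta)^{s/2}v_n\|_{L^2}$, not $\|v_n\|_{L^2(\mathbb{R}^N)}$. Since $H^s_{\mathrm{rad}}(\mathbb{R}^N)$ is defined with the $L^2(\mathbb{R}^N)$ norm built in, a bound on the homogeneous seminorm alone does not yield weak sequential compactness in $H^s_{\mathrm{rad}}(\mathbb{R}^N)$, which is precisely what (ii) asserts. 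Indeed, $\int_{\{|x|>1\}}v_n^2\,dx$ is not controlled by $\int v_n^2|x|^{-2s}\,dx$ or by the seminorm. The paper normalizes by $\|\phi_n\|_{L^2(\mathbb{R}^N)}=1$ precisely so that boundedness of the seminorm finishes the $H^s_{\mathrm{rad}}$ estimate; the trade-off is that one then needs the $D_k$ argument to bound the potential integral when $\alpha<0$. As written, your proof establishes compactness only in the homogeneous sense.

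Second, the continuity of the potential integral. Your H\"older estimate for the tail $\int_{\{|x|>R\}}|x|^{-4s-\alpha}v_n^2\,dx$ against the uniform $L^{2^*_s}$ bound is correct (one checks that $\alpha>-2s$ makes $|x|^{-(4s+\alpha)}$ lie in $L^{N/2s}(\{|x|>R\})$). But when $-2s<\alpha<0$ the weight $|x|^{\alpha}U_{\alpha,s}^{p^*_{\alpha,s}-1}$ is also singular at the origin, and ``strong local convergence suffices'' on $B_R$ is not enough by itself: you must pair $|x|^\alpha\in L^q(B_R)$ for appropriate $q$ with $v_n\to\varphi_1$ in $L^{2q'}(B_R)$ and verify $2q'<2^*_s$, which is exactly where the condition $\alpha>-2s$ is used again. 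The paper does this explicitly with the exponent $\ell=2N/(N-2s+\xi/2)$; your proposal skips the origin entirely.

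Third, simplicity. You use a strong maximum principle for the operator $(-\Delta)^s+V-\lambda_1|x|^{-2s}$ to exclude interior zeros of $|\varphi_1|$ and then derive simplicity from a vanishing linear combination. That is a valid scheme for nonlocal operators once one knows the eigenfunction is regular enough away from the origin to evaluate $(-\Delta)^s$ pointwise (the nonlocal strong maximum principle is indifferent to the sign of the zero-order term), but you invoke it without addressing the singularity of the coefficients at $x=0$ nor the regularity of $\varphi_1$. The paper's route is more elementary and avoids the maximum principle altogether: it shows every minimizer has a fixed sign (strict subadditivity of the Gagliardo seminorm under $v\mapsto|v|$ when $v$ changes sign), observes that $\Psi=\varphi_1-\bar\varphi$ is again a minimizer, hence of a fixed sign, and concludes $\varphi_1=\bar\varphi$ from equality of the $L^2$ normalizations. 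If you want to keep the maximum-principle route, you should either supply the regularity needed to apply it, or replace it by the paper's comparison argument.
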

\begin{proof} 
We prove each part separately. 
\begin{enumerate}
\item[{\em i)}]  From \eqref{Ele} and the fact that $U_{\alpha,s}$ is a solution of \eqref{P}, it is clear {{that
$$
\mathfrak{L} (U_{\alpha,s})=  (1- p^*_{\alpha,s})|x|^{\alpha}U_{\alpha,s}^{p^*_{\alpha,s}}<0\quad \mbox{in }\mathbb{R}^N\setminus\{0\}.
$$
Hence,}} $\langle U_{\alpha,s}, \mathfrak{L}(U_{\alpha,s})\rangle<0$, so that $\lambda_1<0$.\smallskip
\item[{\em ii)}] Consider now a sequence $\{\phi_n\}\subseteq H^s_{\mathrm{rad}}(\mathbb{R}^N)$ with $\|\phi_n\|_{L^2(\mathbb{R}^N)}=1$, which minimizes (\ref{la1}), that is
\begin{equation}\label{dos}
\lim_{n\to\infty}\frac{\mathcal{R}(\phi_n)}{\|\phi_n\|^2_{L^2(\mathbb{R}^N,|x|^{-2s})}}=\lambda_1,
\end{equation}
with $\mathcal{R}$ given in \eqref{Erre}. 
We notice that $\phi_n$ is bounded in $H^s_{\mathrm{rad}}(\mathbb{R}^N)$. 
Indeed, given $\varepsilon>0$, fixed but arbitrary small, there exists $n_0$ such that
\begin{equation}\label{paratres}
\int_{\mathbb{R}^N}|(-\Delta)^{\frac{s}{2}} \phi_n|^2 \,dx\leq(\varepsilon+\lambda_1)\int_{\mathbb{R}^{N}}\frac{\phi_n^2}{|x|^{2s}}+p^*_{\alpha,s} \int_{\mathbb{R}^{N}}|x|^{\alpha} U_{\alpha,s}^{p^*_{\alpha,s}-1}\phi_n^2\, dx,\quad n\geq n_0.
\end{equation}
Thus, if $\alpha \geq 0$, by using (\ref{iemb-2}) and the fact that $\lambda_1<0$ there exists $n_0$ sufficiently large such that,
$$
\int_{\mathbb{R}^N}|(-\Delta)^{\frac{s}{2}} \phi_n|^2 \,dx \leq C\neq C(n),
$$
for every $n\geq n_0$. To obtain the same uniform bound for the case $-2s<\alpha<0$ is more involved. 
Indeed, let us consider $\{D_k\}_{k\in \mathbb{N}}$ an increasing sequence with $D_k>1$ for all $k\in\mathbb{N}$. 
Since $\lambda_1<0$ and $U_{\alpha,s}$ is radially decreasing in $|x|$, with $U_{\alpha,s}(0)$ bounded, choosing $k$ sufficiently large, by \eqref{paratres} and the fact that $|x|^{\alpha} U_{\alpha,s}^{p^*_{\alpha,s}-1}\rightarrow \infty$ when $|x|\rightarrow 0$ there exists $a_k>0$ such that  
$$
\int_{\mathbb{R}^N}|(-\Delta)^{\frac{s}{2}} \phi_n|^2 \,dx 
\begin{array}[t]{lll}
\displaystyle <\int_{\mathbb{R}^N}p^*_{\alpha,s} |x|^{\alpha} U_{\alpha,s}^{p^*_{\alpha,s}-1}\phi_n^2dx\medskip\\
\displaystyle \leq \int_{\mathbb{R}^N}( p^*_{\alpha,s} |x|^{\alpha} U_{\alpha,s}^{p^*_{\alpha,s}-1}-D_k)_+ \phi_n^2dx + \int_{\mathbb{R}^N} D_k  \phi_n^2dx\medskip\\
\displaystyle = \int_{|x|<a_k} ( p^*_{\alpha,s} |x|^{\alpha} U_{\alpha,s}^{p^*_{\alpha,s}-1}-D_k) \phi_n^2dx + \int_{\mathbb{R}^N} D_k\phi_n^2dx\medskip\\
\displaystyle\leq   \|  \,p^*_{\alpha,s} |x|^{\alpha} U_{\alpha,s}^{p^*_{\alpha,s}-1}-D_k\|_{L^{\frac{N}{2s}}(B_{a_k})}\|\phi_n\|^2_{L^{2^*_{s}}(\mathbb{R}^N)}+ D_k\medskip\\
\displaystyle\leq  S_{N,s} \|  \,p^*_{\alpha,s} |x|^{\alpha} U_{\alpha,s}^{p^*_{\alpha,s}-1}-D_k\|_{L^{\frac{N}{2s}}(B_{a_k})}\int_{\mathbb{R}^N}|(-\Delta)^{\frac{s}{2}} \phi_n|^2 \,dx+ D_k,
\end{array}
$$
where $S_{N,s}$ is the sharp constant of the fractional Sobolev embedding. We observe now that, since $D_k\rightarrow \infty$, as $k\rightarrow \infty$, implies that $a_{k}\rightarrow 0$, as $k\rightarrow \infty$, we have
\begin{eqnarray*}
0< \|  \,p^*_{\alpha,s} |x|^{\alpha} U_{\alpha,s}^{p^*_{\alpha,s}-1}-D_{k+1}\|_{L^{\frac{N}{2s}}(B_{a_{k+1}})}&\!\!\leq\!\!& \|  \,p^*_{\alpha,s} |x|^{\alpha} U_{\alpha,s}^{p^*_{\alpha,s}-1}-D_k\|_{L^{\frac{N}{2s}}(B_{a_{k}})}\\
&\!\!<\!\!&\|  \,p^*_{\alpha,s} |x|^{\alpha} U_{\alpha,s}^{p^*_{\alpha,s}-1}\|_{L^{\frac{N}{2s}}(\mathbb{R}^N)}<\infty,
\end{eqnarray*}
if $D_{k+1}>D_k$. 
Hence, we can fix $k_0$ sufficiently large in order to obtain
$$
S_{N,s} \|  \,p^*_{\alpha,s} |x|^{\alpha} U_{\alpha,s}^{p^*_{\alpha,s}-1}-D_{k_0}\|_{L^{\frac{N}{2s}}(B_{a_{k_0}})} \leq \frac{1}{2}, 
$$
so that
$$
\int_{\mathbb{R}^N}|(-\Delta)^{\frac{s}{2}} \phi_n|^2 \,dx\leq 2D_{k_0}<\infty\quad \mbox{for every }n\in\mathbb{N}.
$$
Therefore, up to a subsequence, 
\begin{eqnarray}
\displaystyle \phi_{n}&\rightharpoonup& \phi \qquad \mbox{ weakly in } H^{s}_{\mathrm{rad}}(\mathbb{R}^N), \nonumber\\
\displaystyle \phi_{n}&\to& \phi \qquad \mbox{ strongly in } L^{r}(B_R), \quad 1\leq r < 2^*_{s} \label{estrella1},\\
\displaystyle \phi_{n}&\to& \phi \qquad \mbox{ a.e. in } B_R \mbox{ for every $R>0$}.\nonumber
\end{eqnarray}
We will prove now that $\phi$, is, in fact, a minimizer of \eqref{la1}. For that 
%
we consider first the case $\alpha\geq 0$. Let fix $\eta>0$. 
By \eqref{estrella1} and the fast decay property of $U_{\alpha,s}$ given in \eqref{fastdecay}, there exists $\overline{R}>0$ and $n_0\in\mathbb{N}$ such that 
\begin{eqnarray}
\left| \int_{\mathbb{R}^N} V(x)(\phi_n^2-\phi^2)\,dx \right| \!&\!\!\leq\!\!&
\!\displaystyle \int_{B_R}|V(x)|\,|\phi_n^2-\phi^2|\,dx +\int_{\mathbb{R}^N\setminus B_R} |V(x)||\phi_n^2-\phi^2|\,dx\nonumber\\
\!&\!\!\leq\!\!&\!C\left(\int_{ B_R}   |\phi_n^2-\phi^2|   \,dx +\frac{1}{R^{4s+\alpha}}\int_{\mathbb{R}^N\setminus B_R} |\phi_n^2-\phi^2|\,dx \right)\nonumber\\
\!&\!\!\leq\!\!&\!\eta,\label{eta}
\end{eqnarray}
if $R\geq \overline{R}$, $n\geq n_0$ where $V$ was given in \eqref{Uve}. Again, when $-2s<\alpha<0$ the computation is more tricky. 
In that case we can assume, without loss of generality that there exists $\xi>0$ such that $\alpha=-2s+\xi<0$. 
Then for every $2<\ell\leq 2^{*}_{s}$ we get
\begin{equation}\label{bajon}
\left|\int_{\mathbb{R}^{N}}{V(x)(\phi_n^2-\phi^2)\,dx}\right|\leq\left(\int_{\mathbb{R}^{N}}|V(x)|^{\frac{\ell}{\ell-2}}\,dx\right)^{\frac{\ell-2}{\ell}}\left(\int_{\mathbb{R}^{N}}(\phi_n^2-\phi^2)^{\frac{\ell}{2}}\,dx\right)^{\frac{2}{\ell}}.
\end{equation}
Since, by using the boundedness of $U_{\alpha,s}$ and \eqref{fastdecay}, by taking 
$$
2<\ell=\frac{2N}{N-2s+\frac{\xi}{2}}<2^*_s,\quad q=\frac{\ell}{\ell-2}=\frac{2N}{4s-\xi},
$$
it follows that
\begin{eqnarray*}
\int_{\mathbb{R}^{N}}|V(x)|^{\frac{\ell}{\ell-2}}\,dx&\leq& C\left(\int_{0}^{1} r^{N-1+\alpha\frac{\ell}{\ell-2}}\,dr+\int_{1}^{\infty} r^{N-1-\frac{(4s+\alpha)\ell}{\ell-2}}\,dr\right)\leq C,
\end{eqnarray*}
by \eqref{estrella1} and \eqref{bajon} we get that \eqref{eta} is also true for the case $-2s<\alpha< 0$.  Thus,
\begin{equation}\label{objetivo}
\int_{\mathbb{R}^N}|x|^{\alpha} |U_{\alpha,s}|^{p^*_{\alpha,s}-1}\phi_n^2\,dx\rightarrow \int_{\mathbb{R}^N}|x|^{\alpha} |U_{\alpha,s}|^{p^*_{\alpha,s}-1}\phi^2\,dx,
\end{equation}
for every $-2s<\alpha$. 
Therefore, using the lower semicontinuity of the norm, \eqref{dos} and \eqref{objetivo} on one hand we get
\begin{equation}\label{bajon1}
\mathcal{R}(\phi)=\mathcal{R}(\liminf_{n\to\infty}\phi_n)\leq \liminf_{n\to\infty}\mathcal{R}(\phi_n)=\lambda_1\liminf_{n\to\infty}\|\phi_n\|^{2}_{L^{2}(\mathbb{R}^{N}, |x|^{-2s})},
\end{equation}
where $\mathcal{R}$ was given in \eqref{Erre}. 
On the other hand, by definition of the infimum, one has
\begin{equation}\label{bajon2}
\mathcal{R}(\liminf_{n\to\infty}\phi_n)=\mathcal{R}(\phi)\geq \lambda_1\|\phi\|^{2}_{L^{2}(\mathbb{R}^{N}, |x|^{-2s})}.
\end{equation}
Therefore by \eqref{bajon1}-\eqref{bajon2}, {and the fact that $\lambda_1<0$}, we obtain
\begin{equation}\label{flow3}
\frac{\mathcal{R}(\phi)}{\|\phi\|^2_{L^{2}(\mathbb{R}^{N},|x|^{-2s})}}=\lambda_1.
\end{equation}
That is, $\phi$, usually denoted by $\varphi_1$, is a minimizer of \eqref{la1} as wanted. 
\item[{\em iii)}] We will prove now that $\lambda_1$ is simple, namely, that $\varphi_1$ is the unique minimizer of \eqref{la1} up to a constant. 
First of all we notice that, by the minimality, 
\begin{equation}\label{helping}
\left.\frac{J(\varphi_1+\eta\Phi)}{d\eta}\right|_{\eta=0}=\lim_{\eta\to 0}\frac{J(\varphi_1+\eta\Phi)-J(\varphi_1)}{\eta}=0,
\end{equation}
for every $\Phi\in H^s_{\mathrm{rad}}(\mathbb{R}^N)$ where
$$
J(v):=\frac{\mathcal{R}(v)}{\|v\|^2_{L^{2}(\mathbb{R}^{N},|x|^{-2s})}},\quad v\in H^s_{\mathrm{rad}}(\mathbb{R}^N),
$$
with $\mathcal{R}$ given in \eqref{Erre}. 
Then, by \eqref{flow3} and \eqref{helping}, it follows that
\begin{eqnarray}
0\!\!&\!\!=\!\!&\!\! {{C_{N,s}}} \iint_{\mathbb{R}^{2N}}\frac{(\varphi_1(x)-\varphi_1(y))(\Phi(x)-\Phi(y))}{|x-y|^{N+2s}}\, dx\, dy-p^*_{\alpha,s}\int_{\mathbb{R}^{N}}|x|^{\alpha}|U_{\alpha,s}|^{p^*_{\alpha,s}-1}\varphi_1\Phi\, dx\nonumber\\
&&-\lambda_1\int_{\mathbb{R}^{N}}\frac{\varphi_1\Phi}{|x|^{2s}}\, dx,\quad \Phi\in H^s_{\mathrm{rad}}(\mathbb{R}^N).\label{flow}
\end{eqnarray}
Thus, {{in a week sense
\begin{equation}\label{eqphistar}
(-\Delta)^s\varphi_1-p^*_{\alpha,s}|x|^{\alpha} |U_{\alpha,s}|^{p^*_{\alpha,s}-1}\varphi_1=\frac{\lambda_1}{|x|^{2s}}\varphi_1.
\end{equation}
Moreover,}} since
$$
\left||\varphi_1(x)|-|\varphi_1(y)|\right|\leq |\varphi_1(x)-\varphi_1(y)|,\quad x,y\in\mathbb{R}^N,
$$
then $\mathcal{R}(|\varphi_1|)\leq \mathcal{R}(\varphi_1)$, so we could assume, without loss of generality, that $\varphi_1\geq 0$. 
Observe that, again without loss of generality, we can also consider $\|\varphi_1\|_{L^2(\mathbb{R}^{N})}=1$. Let us define now $\varphi\in H^s_{\mathrm{rad}}(\mathbb{R}^N)$ another minimizer of \eqref{la1} and
$$
\bar\varphi(x):=\frac{\varphi(x)}{\|\varphi\|_{L^{2}(\mathbb{R}^{N})}},\quad x\in\mathbb{R}^{N}.
$$
By the previous arguments, 
\begin{eqnarray}
&&C_{N,s}\iint_{\mathbb{R}^{2N}}\frac{(\bar\varphi(x)-\bar\varphi(y))(\Phi(x)-\Phi(y))}{|x-y|^{N+2s}}\, dx\, dy-p^*_{\alpha,s}\int_{\mathbb{R}^{N}}|x|^{\alpha}|U_{\alpha,s}|^{p^*_{\alpha,s}-1}\bar\varphi\Phi\,dx\nonumber\\
&&\,\,=\lambda_1\int_{\mathbb{R}^{N}}\frac{\bar\varphi\Phi}{|x|^{2s}}\, dx,\label{flow1}
\end{eqnarray}
for every $\Phi\in H^s_{\mathrm{rad}}(\mathbb{R}^N)$. Moreover we could consider that $\bar\varphi\geq0$ a.e. in $\mathbb{R}^N$. By \eqref{flow} and \eqref{flow1}, it is clear that
$$
\frac{\mathcal{R}(\varphi_1-\bar\varphi)}{\|\varphi_1-\bar\varphi\|_{L^{2}(\mathbb{R}^{N},|x|^{-2s})}}=\lambda_1,
$$
that is, $\Psi(x):=\varphi_1(x)-\bar\varphi(x)$, $x\in\mathbb{R}^{N}$ is also a minimizer associated to $\lambda_1$. 
Thus we could assume $\Psi\geq0$ a.e. in $\mathbb{R}^{N}$, so that $\varphi_1(x)\geq\bar\varphi(x)$, a.e. $x\in\mathbb{R}^{N}$ which also gives
\begin{equation}\label{positivity}
\varphi_1^2(x)\geq\bar\varphi^2(x),\mbox{ a.e. $x\in\mathbb{R}^{N}$}.
\end{equation}
Since on the other hand
$$
\int_{\mathbb{R}^{N}}\big(\varphi_1^2(x)-\bar\varphi^2(x)\big)\, dx=0,
$$
then, by \eqref{positivity} we conclude that $\varphi_1=\bar\varphi$ a.e. in $\mathbb{R}^{N}$ as wanted.
\end{enumerate}
\end{proof}

The same result holds if we also minimize the quadratic form that appears in \eqref{la1} with some orthogonality  conditions. 
To this end we say that $\phi$ and $\varphi$ are orthogonal if they satisfy 
\begin{equation}\label{perp}
\phi\perp\varphi=\int_{\mathbb{R}^N} \frac{\phi\varphi}{|x|^{2s}} dx = 0,
\end{equation}
and we consider
\begin{equation}\label{la2}
\lambda_2:=\inf_{\scriptsize
{\begin{array}[c]{cc} 
v \in  H^s_{\mathrm{rad}}(\mathbb{R}^N)\setminus\{0\} \\
v\perp \mathrm{span}\{\varphi_1\} 
\end{array}}}
\frac{\mathcal{R}(v)}{\|v\|^2_{L^2(\mathbb{R}^N,|x|^{-2s})}},
\end{equation}
where $\varphi_1$ was given Proposition \ref{saleg-1}. 
Then we have the following
\begin{proposition}\label{propovalprop2}
Assume that $\lambda_2<\mathcal{A}_{N,s}$, with $\mathcal{A}_{N,s}$ given by \eqref{lambda_de_hardy}. 
Then:
\begin{enumerate}
\item[{\em i})] 
Every minimizing sequence of (\ref{la2}) has a subsequence which weakly converges in $H^{s}_{\mathrm{rad}}(\mathbb{R}^N)$, and strongly in $L^r(B_R))$ for $1\leq r<2^*_s$ for every $R>0$.\smallskip
\item[{\em ii})] There exists a minimizer ${\varphi}_{2}$ of (\ref{la2}). 
Moreover, $\lambda_2$ is an eigenvalue of $\mathfrak{L}$ and ${\varphi}_2$ is an eigenfunction associated to $\lambda_2$.
\end{enumerate}
\end{proposition}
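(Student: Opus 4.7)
My plan is to mirror the proof of Proposition \ref{saleg-1}, with one essential new difficulty to overcome. The denominator in \eqref{la2} carries the singular Hardy weight $|x|^{-2s}$, so mass of a minimizing sequence could concentrate at the origin; the strict inequality $\lambda_2<\mathcal{A}_{N,s}$ relative to the (non-attained) sharp Hardy constant should be precisely what rules out this concentration, and it is exactly where the hypothesis will enter.

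First I would take a minimizing sequence $\{\phi_n\}\subset H^s_{\mathrm{rad}}(\mathbb{R}^N)$ of \eqref{la2} with $\|\phi_n\|^2_{L^2(\mathbb{R}^N,|x|^{-2s})}=1$ and $\phi_n\perp\varphi_1$. Since the orthogonality constraint does not affect the energy estimate, the same dichotomy used in part (ii) of Proposition \ref{saleg-1} (easy case $\alpha\geq 0$ via \eqref{iemb-2}; subtle case $-2s<\alpha<0$ via truncation of the potential together with fractional Sobolev) should yield a uniform bound in $H^s_{\mathrm{rad}}(\mathbb{R}^N)$. Compactness of the radial embedding then gives, up to a subsequence, $\phi_n\rightharpoonup\phi$ weakly in $H^s_{\mathrm{rad}}$ and strongly in $L^r(B_R)$ for $1\leq r<2^*_s$ and every $R>0$, which proves (i). The calculation \eqref{eta}--\eqref{objetivo} transfers verbatim to give
\[
\int_{\mathbb{R}^N}|x|^\alpha U_{\alpha,s}^{p^*_{\alpha,s}-1}\phi_n^2\,dx\longrightarrow \int_{\mathbb{R}^N}|x|^\alpha U_{\alpha,s}^{p^*_{\alpha,s}-1}\phi^2\,dx,
\]
while \eqref{Hardy} makes the linear functional $\Phi\mapsto\int\varphi_1\Phi/|x|^{2s}\,dx$ continuous on $H^s_{\mathrm{rad}}(\mathbb{R}^N)$, so the orthogonality $\phi\perp\varphi_1$ will pass to the limit.

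Next I would combine \eqref{Hardy} with the above passage to the limit in the potential term to obtain
\[
\lambda_2 = \lim_{n\to\infty}\mathcal{R}(\phi_n)\geq \mathcal{A}_{N,s}-p^*_{\alpha,s}\int_{\mathbb{R}^N}|x|^\alpha U_{\alpha,s}^{p^*_{\alpha,s}-1}\phi^2\,dx;
\]
since $\lambda_2<\mathcal{A}_{N,s}$, the potential integral must be strictly positive, so $\phi\not\equiv 0$. To upgrade non-triviality to attainment I would split $\phi_n=\phi+\eta_n$ with $\eta_n\rightharpoonup 0$ in $H^s_{\mathrm{rad}}$ and apply the Brezis--Lieb lemma both to the Gagliardo seminorm and to the weighted $L^2$ norm (both valid since $\phi_n\to\phi$ a.e.); combined with the vanishing of the potential term on $\eta_n$ (same estimate as for the original sequence), this yields
\[
\mathcal{R}(\phi_n)=\mathcal{R}(\phi)+\|(-\Delta)^{s/2}\eta_n\|^2_{L^2(\mathbb{R}^N)}+o(1),\qquad 1=\mu+\|\eta_n\|^2_{L^2(\mathbb{R}^N,|x|^{-2s})}+o(1),
\]
where $\mu:=\|\phi\|^2_{L^2(\mathbb{R}^N,|x|^{-2s})}\in(0,1]$. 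Applying \eqref{Hardy} to $\eta_n$ and using that $\phi/\sqrt{\mu}$ is admissible in \eqref{la2}, so $\mathcal{R}(\phi)\geq \lambda_2\mu$, passage to the limit gives
\[
\lambda_2 \geq \lambda_2\mu + \mathcal{A}_{N,s}(1-\mu)=\lambda_2+(1-\mu)(\mathcal{A}_{N,s}-\lambda_2),
\]
which forces $\mu=1$ and $\mathcal{R}(\phi)=\lambda_2$. Setting $\varphi_2:=\phi$ delivers a minimizer, and the equation $\mathfrak{L}\varphi_2=\lambda_2\varphi_2/|x|^{2s}$ follows from the first variation: the Lagrange multiplier in front of $\varphi_1/|x|^{2s}$ must vanish because testing against $\varphi_1$ and invoking \eqref{eqphistar} together with $\lambda_1<\lambda_2$ (which holds since $\lambda_1$ is simple) forces it to be zero.

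The hard part will be the final chain of inequalities: the only possible loss of compactness is concentration of $|\eta_n|^2/|x|^{2s}$ at the origin, and the sharp Hardy threshold $\mathcal{A}_{N,s}$ enters the Brezis--Lieb bookkeeping at exactly the critical rate that allows one to convert the hypothesis $\lambda_2<\mathcal{A}_{N,s}$ into the equality $\mu=1$.
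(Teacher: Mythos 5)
Your proposal is correct, but it takes a genuinely different route from the paper's. After establishing the uniform $H^s_{\mathrm{rad}}$ bound and extracting a subsequence, the paper handles the potential loss of compactness from the singular denominator by a renormalization trick: writing $\lambda_2 = \mathcal{A}_{N,s}-\delta$ with $\delta>0$ and defining $\|v\|^2_{*}:=\|(-\Delta)^{s/2}v\|^2_{L^2}-\left(\mathcal{A}_{N,s}-\tfrac{\delta}{2}\right)\|v\|^2_{L^2(|x|^{-2s})}$, which by the sharp Hardy inequality \eqref{Hardy} is an equivalent norm to the Gagliardo seminorm (hence weakly lower semicontinuous). The shifted Rayleigh quotient then carries the strictly negative coefficient $\lambda_2-(\mathcal{A}_{N,s}-\delta/2)=-\delta/2$ in front of the weighted $L^2$ norm, so Fatou's lemma and lower semicontinuity run in the same direction as in Proposition~\ref{saleg-1}, yielding attainment in one stroke. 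You instead perform the decomposition $\phi_n=\phi+\eta_n$ with $\eta_n\rightharpoonup 0$, split both the Gagliardo form and the weighted $L^2$ norm (for quadratic forms this is just the Hilbert-space expansion with vanishing cross terms, so the appeal to a.e.\ convergence and the actual Brezis--Lieb lemma is unnecessary), and close by the inequality $\lambda_2 \geq \lambda_2\mu+\mathcal{A}_{N,s}(1-\mu)$, using the hypothesis to force $\mu=1$. Both arguments hinge on exactly the same spectral gap $\lambda_2<\mathcal{A}_{N,s}$, but your route makes the concentration mechanism explicit and, as a bonus, gives a clean a priori argument that $\phi\not\equiv 0$ (via $\lambda_2\geq \mathcal{A}_{N,s}+\int V\phi^2$) as well as an explicit verification that the Lagrange multiplier on $\varphi_1$ vanishes — two points the paper passes over without comment. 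The paper's approach is shorter and cleverly packages the compactness bookkeeping into the equivalent norm $\|\cdot\|_*$; yours is more transparent about where the Hardy threshold enters.
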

\begin{proof}
Let us consider a minimizing sequence $\phi_n \in{ {H^s_{\mathrm{rad}}(\mathbb{R}^N)}}$, {{$\phi_n\perp \mathrm{span}\{\varphi_1\}$}}, such that $\|\phi_n \|_{L^2(\mathbb{R}^{N})}= 1$, which minimizes (\ref{la2}). 
Note that $\phi_n$ is bounded in {{$H^s_{\mathrm{rad}}(\mathbb{R}^N)$}} for every $\alpha>-2s$. 
Indeed, doing similar computations as in \eqref{paratres} we get that, for every, fixed but arbitrarily small $\varepsilon>0$,
$$
\left(1-\frac{\lambda_2+\varepsilon}{\mathcal{A}_{N,s}}\right)\int_{\mathbb{R}^N}|(-\Delta)^{\frac{s}{2}} \phi_n|^2 \,dx\leq p^*_{\alpha,s} \int_{\mathbb{R}^{N}}|x|^{\alpha} U_{\alpha,s}^{p^*_{\alpha,s}-1}\phi_n^2\, dx,
$$
if $n\geq n_0(\varepsilon)$. 
Thus, by \eqref{iemb-2} we conclude that there exists $n_0$ sufficiently large such  that 
$$
\int_{\mathbb{R}^N}|(-\Delta)^{\frac{s}{2}} \phi_n|^2 \,dx  <\infty,\quad n\geq n_0.
$$
Thus the minimizing sequence $\phi_n$ converges, up to a subsequence, to a function $\phi$ weakly in $H^s_{\mathrm{rad}}(\mathbb{R}^N)$, strongly in $L^2(B_R)$ for all $R>0$ and pointwise a.e. Since, subsequently, $\phi_n$ also converges weakly in $L^{2}(\mathbb{R}^N,|x|^{-2s})$,
$$
0 = \lim_{k\rightarrow \infty} \int_{\mathbb{R}^N} |x|^{-2s} \phi_{n_k} \varphi_1\,dx =\int_{\mathbb{R}^N} |x|^{-2s}\phi \varphi_{1}\,dx\quad \mbox{as } k\rightarrow \infty, 
$$
obtaining that {{$\phi\perp \mathrm{span}\{\varphi_1\}$}}. 
Then, doing as in Proposition \ref{saleg-1} ii), it follows that $\lambda_2$ is attained. 
In fact we can assume, without loss of generality, that $\lambda_2=\mathcal{A}_{N,s}-\delta$ for some $\delta>0$. 
Then it is clear that
$$
0>\lambda_2-\left(\mathcal{A}_{N,s}-\frac{\delta}{2}\right)=\lim_{n\rightarrow \infty}\frac{\mathcal{R}^*(\phi_n)}{\|v_n\|^2_{L^2(\mathbb{R}^N,|x|^{-2s})}},
$$
where {{
$$
\mathcal{R}^*(\phi_n):=\|\phi_n\|^{2}_{*}+\int_{\mathbb{R}^N}V(x)\phi_n^2\,dx,
$$
with}} $V$ given in \eqref{Uve} and 
$$
\|v\|^2_{*}:=\|(-\Delta)^{\frac{s}{2}}v\|^{2}_{L^{2}(\mathbb{R}^{N}}-\left(\mathcal{A}_{N,s}-\frac{\delta}{2}\right)\|v\|^2_{L^2(\mathbb{R}^N,|x|^{-2s})},\, v\in H^{s}(\mathbb{R}^{N}).
$$
Since by \eqref{Hardy}, $\|v\|^2_{*}$ is equivalent to $\|(-\Delta)^{\frac{s}{2}}v\|^{2}_{L^{2}(\mathbb{R}^{N})}$, by the lower semicontinuity and \eqref{objetivo} on one hand it follows that
$$
\mathcal{R}^*(\phi)\leq\liminf_{n\to\infty}\mathcal{R}^*(\phi_n)=\left(\lambda_2-\left(\mathcal{A}_{N,s}-\frac{\delta}{2}\right)\right)\liminf_{n\to\infty}\|\phi_n\|^{2}_{L^2(\mathbb{R}^N,|x|^{-2s})}.
$$
On the other hand, by definition of the infimum, we also have
$$
\mathcal{R}^*(\phi)\geq \left(\lambda_2-\left(\mathcal{A}_{N,s}-\frac{\delta}{2}\right)\right)\|\phi\|^{2}_{L^2(\mathbb{R}^N,|x|^{-2s})},
$$
we conclude by using the Fatou's Lemma.
\end{proof}
The previous result can be generalized. In fact, if we consider \textcolor{black}
{$n\geq 1$ an arbitrary natural number and} $M$ a $(n-1)$-dimensional subspace spanned by the eigenfunctions corresponding to the eigenvalues $\lambda_1, \dots, \lambda_{n-1}$, {\it i.e.} {{$ M=\mathrm{span}\{\varphi_1,\ldots ,\varphi_{n-1}\} $}}, and we define
\begin{equation}\label{la-i}
\lambda_n:=\inf_{\scriptsize
{\begin{array}[c]{cc} 
v \in  H^s_{\mathrm{rad}}(\mathbb{R}^N)\setminus\{0\} \\
v\perp M 
\end{array}}}
\frac{\mathcal{R}(v)}{\|v\|^2_{L^2(\mathbb{R}^N,|x|^{-2s})}},
\end{equation}
following the ideas used in the proof of the  previous proposition, we get the next.
\begin{proposition} \label{propola-i}
Assume that $\textcolor{black}{\lambda_n}<\mathcal{A}_{N,s}$. 
Then:
\begin{enumerate}
\item[{\em i})] Every minimizing sequence of (\ref{la-i}) has a subsequence which weakly converges in $H^{s}_{\mathrm{rad}}(\mathbb{R}^N)$, and strongly in $L^r(B_R))$ for $1\leq r<2^*_s$ for every $R>0$.\smallskip\smallskip
\item[{\em ii})] There exists a minimizer $\textcolor{black}{\varphi}_{n}$ of (\ref{la-i}). 
Moreover, $\textcolor{black}{\lambda_n}$ is an eigenvalue of $\mathfrak{L}$ and $\textcolor{black}{\varphi}_n$ is an eigenfunction associated to $\textcolor{black}{\lambda_n}$.
\end{enumerate}
\end{proposition}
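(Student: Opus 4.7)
The plan is to follow verbatim the scheme used in the proof of Proposition \ref{propovalprop2}, the only modification being that the single orthogonality constraint $\phi \perp \mathrm{span}\{\varphi_1\}$ is replaced by the $(n-1)$-dimensional constraint $\phi \perp M$ with $M=\mathrm{span}\{\varphi_1,\ldots,\varphi_{n-1}\}$. The hypothesis $\lambda_n < \mathcal{A}_{N,s}$ plays exactly the same role as $\lambda_2 < \mathcal{A}_{N,s}$ did, namely it supplies, via the Hardy--Sobolev inequality \eqref{Hardy}, the spectral gap needed to absorb the kinetic term in the a priori bound on the minimizing sequence.

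First I would pick a minimizing sequence $\{\phi_k\}\subset H^s_{\mathrm{rad}}(\mathbb{R}^N)$ with $\|\phi_k\|_{L^2(\mathbb{R}^N)}=1$ and $\phi_k\perp M$. For any $\varepsilon>0$ small enough that $\lambda_n+\varepsilon<\mathcal{A}_{N,s}$, proceeding as in \eqref{paratres} and combining \eqref{Hardy} with \eqref{iemb-2} in the case $\alpha\geq 0$, or with the $L^{N/2s}$ cut-off device borrowed from Proposition \ref{saleg-1}(ii) when $-2s<\alpha<0$, gives, for $k$ large,
$$\left(1-\frac{\lambda_n+\varepsilon}{\mathcal{A}_{N,s}}\right)\int_{\mathbb{R}^N}|(-\Delta)^{s/2}\phi_k|^2\,dx\leq p^*_{\alpha,s}\int_{\mathbb{R}^N}|x|^{\alpha}U_{\alpha,s}^{p^*_{\alpha,s}-1}\phi_k^2\,dx\leq C,$$
exactly as in the case $n=2$. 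A standard subsequence extraction then yields a weak limit $\phi\in H^s_{\mathrm{rad}}(\mathbb{R}^N)$ and strong convergence in $L^r(B_R)$ for $1\leq r<2^*_s$ and every $R>0$ by the compact embedding for radial functions, which proves statement (i).

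For (ii), I would verify that the constraint $\phi\perp M$ transfers to the weak limit: by Cauchy--Schwarz and Hardy, for each $j<n$ the map $v\mapsto\int v\varphi_j/|x|^{2s}\,dx$ is a bounded linear functional on $H^s_{\mathrm{rad}}(\mathbb{R}^N)$, since $\varphi_j\in L^2(\mathbb{R}^N,|x|^{-2s})$, so weak convergence of $\phi_k$ gives $\phi\perp\varphi_j$ for every $j<n$, hence $\phi\perp M$. The convergence of the potential term \eqref{objetivo} and the lower semicontinuity of the modified norm $\|\cdot\|_*$ carry over verbatim, so $\phi=:\varphi_n$ attains the infimum. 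Finally, the Euler--Lagrange equation with $n-1$ Lagrange multipliers reads
$$\mathfrak{L}\varphi_n=\lambda_n\,\frac{\varphi_n}{|x|^{2s}}+\sum_{j=1}^{n-1}\mu_j\,\frac{\varphi_j}{|x|^{2s}};$$
testing against each $\varphi_j$ and using the eigenvalue equation satisfied by $\varphi_j$, together with the orthogonality relations $\varphi_n\perp\varphi_j$ and $\varphi_i\perp\varphi_j$ for $i\neq j$, forces $\mu_j=0$, so $\varphi_n$ is a genuine eigenfunction of $\mathfrak{L}$ with eigenvalue $\lambda_n$.

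The main obstacle is, as in the case $n=2$, the singular regime $-2s<\alpha<0$ in the a priori bound, which requires importing the $L^{N/(2s)}(B_{a_k})$ cut-off device from Proposition \ref{saleg-1}(ii) rather than the clean estimate \eqref{iemb-2} available when $\alpha\geq 0$; once this is handled, the bookkeeping of the $n-1$ orthogonality constraints and Lagrange multipliers is a routine induction on the $n=2$ argument and introduces no new analytical difficulty.
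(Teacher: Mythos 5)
Your proposal is correct and follows essentially the same route the paper intends: the paper gives no separate argument for this proposition, stating only that it follows the ideas of Proposition \ref{propovalprop2}, i.e.\ the same spectral-gap bound via \eqref{Hardy} under $\lambda_n<\mathcal{A}_{N,s}$, weak/local-strong compactness, passage of the orthogonality constraints to the limit, and the Euler--Lagrange equation (your explicit verification that the Lagrange multipliers vanish, and your careful treatment of the case $-2s<\alpha<0$ via the cut-off device of Proposition \ref{saleg-1}(ii), only spell out steps the paper leaves implicit).
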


Observe that, by construction, and the simplicity of the first eigenfunction, we have
$$
\lambda_1<\lambda_2\leq \ldots \leq \lambda_n\leq \ldots
$$
Now we review some results that follow the ideas developed in \cite[Section 5]{Frank-Lenzmann-Silvestre} (see also \cite{Frank-Lenzmann}). 
These results, in particular, give us an upper bound for the number of sign-change of the second eigenfunction of the linearized operator $\mathfrak{L}$ that, as we commented at {{the beginning of this section, will}} be one of the key point to establish our non-degeneracy Theorem.

The main difference of the computations that we will show with respect to the ones done in \cite{Frank-Lenzmann-Silvestre}, is that here we consider singular eigenvalue. 
As we mentioned in the introduction of this work, the key tool to establish the fundamental results needed to proved the nondegeneracy Theorem, are based in a Courant type nodal domain Theorem for the extended problem introduced in \cite{Caf-Sil}. To work with it let us consider 

$$
a=1-2s\in (-1,1),\quad d_s=2^{2s-1} \frac{\gamma(s)}{\gamma(1-s)} > 0,\quad 0<s<1,
$$
and the energy functional 
$$
\mathcal{F}(u) := d_s    \iint_{\mathbb{R}_+^{N+1}} |\nabla u(x,t)|^2t^a\, dx\, dt +  \int_{\mathbb{R}^N} V (x)|u(x, 0)|^2 \,dx,
$$
defined for {{
$$
u\in \mathcal{H}^{1,a}(\mathbb{R}_+^{N+1}):=\{u\in \dot{\mathcal{H}}^{1,a}(\mathbb{R}_+^{N+1}) : u(x, 0)\in L^2(\mathbb{R}^N)\},
$$
where $\dot{\mathcal{H}}^{1,a}(\mathbb{R}_+^{N+1})$}} is the completion of $C_0^{\infty}(\mathbb{R}^{N+1}_+)$  with respect to the homogeneous Sobolev norm
$$ 
\|u\|^{2}_{\mathcal{H}^{1,a}(\mathbb{R}_+^{N+1})}= \iint_{\mathbb{R}_+^{N+1}}  |\nabla u|^2 t^a \,dx \,dt,
$$
and $V$ {{is }} given in \eqref{Uve}. Since it can be seen that there exists a well-defined trace operator $T : \mathcal{H}^{1,a}(\mathbb{R}_+^{N+1})\rightarrow \mathcal{H}^s(\mathbb{R}^N)$, by $u(x,0)$ we denote the trace of $u(x,t)$ on $\partial \mathbb{R}_+^{N+1}$, that is, as usual $u(x,0):=(Tu)(x)$. It is well-known (see for instance \cite{BCdPS}) that
\begin{equation}\label{FL}
\iint_{\mathbb{R}^{N+1}} |\nabla u|^2t^a \,dx\, dt  \geq \frac{1}{d_s} \int_{\mathbb{R}^{N}}  |(-\Delta)^{\frac{s}{2}} Tu|^2 dx,
\end{equation}
where $\mbox{the equality holds if and only if }u = \mathcal{E}_sf\mbox{ for some }f\in {H}^s(\mathbb{R}^N)$, with
%
$$
\mathcal{E}_sf(x,t):= (\mathcal{P}_s(\cdot,t)\ast f)(x). 
$$
Here 
$$
\mathcal{P}_s(x,t)=\frac{1}{x}P_s\left(\frac{x}{t}\right)=c_s\frac{1}{x}\left(\frac{t^2}{t^2+x^2}\right)^{\frac{2-a}{2}},\, x\in\mathbb{R},\, t>0,
$$
denotes the $s$-Poisson extension of $f : \mathbb{R}^N\rightarrow\mathbb{R}$ to the upper halfspace $\mathbb{R}_+^{N+1}$.  

We also want to highlight that, since
$$
V\in
\begin{cases}
L^{p}(\mathbb{R}^N),\mbox{ for every $p>\frac{N}{2s}$} &\text{if }\alpha>0,\\[8pt]
L^{\frac{2N}{2s-\alpha}}(\mathbb{R}^N)&\text{if }-2s<\alpha<0,
\end{cases}
$$
then $V$ is in the so called Kato class, see \cite{Frank-Lenzmann-Silvestre}.

Finally we ant to mention that, since we are interested in ${H} = (-\Delta)^s+V$ with $V$ with a {{radially symmetric}} potential, will be natural to work with the closed subspace
$$
\mathcal{H}^{1,a}_{\mathrm{rad}}(\mathbb{R}_+^{N+1}):=  \{u\in \mathcal{H}^{1,a}(\mathbb{R}_+^{N+1}) : u(x,t)\mbox{ is radial in }x\in\mathbb{R}^N\mbox{ for a.e. }t > 0 \}.
$$
We can show now the auxiliary fundamental results. The first one is the following. 
\begin{proposition} \label{Propo1}
Let $N\geq 1$, $0 < s < 1$ and $V$ given in \eqref{Uve}. Assume that 
$$
H = (-\Delta)^s {+} V,
$$
acting on $L^2_{\mathrm{rad}}(\mathbb{R}^N,|x|^{-2s})$ has at least $n$ singular eigenvalues
$$
\lambda_1\leq \lambda_2 \leq  \ldots \leq \lambda_n < \mathcal{A}_{N,s},
$$
where $\mathcal{A}_{N,s}$ was given in \eqref{lambda_de_hardy}.
Furthermore, let $M$ be an $(n-1)$-dimensional subspace of $L^2_{\mathrm{rad}}(\mathbb{R}^N,|x|^{-2s})$ spanned by the eigenfunctions corresponding to the eigenvalues $\lambda_1, \dots, \lambda_{n-1}$. 
Then we have 
$$
\lambda_n= \inf\bigg\{\mathcal{F}(u) : u\in\mathcal{H}_{\mathrm{rad}}^{1,a}(\mathbb{R}^{N+1}), \int_{\mathbb{R}^N} |x|^{-2s}|u(x,0)|^2 \,dx = 1, u(\cdot,0)\perp M \bigg\},
$$
where the orthogonality is  given by \eqref{perp}.

Moreover, the infimum is attained if and only if $u = \mathcal{E}_a f$ with $f\in H_{\mathrm{rad}}^s(\mathbb{R}^N )$, where $ \|f\|_{L^2(\mathbb{R}^{N}, |x|^{-2s})}^2= 1$ and $f\in M^{\perp} $ is a linear combination of eigenfunctions of $H$ corresponding to the eigenvalue $\lambda_n$.
\end{proposition}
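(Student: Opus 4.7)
\medskip

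\noindent\textbf{Proof plan for Proposition \ref{Propo1}.}
The strategy is to transport the variational problem already solved in Proposition \ref{propola-i} to the extension half-space by means of the trace--extension inequality \eqref{FL}, and then to read off attainment from the equality case. Take any admissible $u\in\mathcal{H}^{1,a}_{\mathrm{rad}}(\mathbb{R}^{N+1}_+)$ with trace $f:=Tu=u(\cdot,0)$ satisfying $\|f\|^2_{L^2(\mathbb{R}^N,|x|^{-2s})}=1$ and $f\perp M$. Applying \eqref{FL} to the gradient term gives
\begin{equation*}
\mathcal{F}(u)\;=\;d_s\iint_{\mathbb{R}^{N+1}_+}|\nabla u|^2 t^a\,dx\,dt+\int_{\mathbb{R}^N}V(x)|f|^2\,dx\;\geq\;\int_{\mathbb{R}^N}|(-\Delta)^{s/2}f|^2\,dx+\int_{\mathbb{R}^N}V(x)f^2\,dx\;=\;\mathcal{R}(f).
\end{equation*}
Since $f\in H^s_{\mathrm{rad}}(\mathbb{R}^N)$ is an admissible competitor in \eqref{la-i}, Proposition \ref{propola-i} delivers $\mathcal{R}(f)\geq \lambda_n$. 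Taking the infimum over all such $u$ gives the lower bound $\geq \lambda_n$.

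For the matching upper bound I would plug in the Poisson extension of a minimizer. Let $\varphi_n$ be the eigenfunction of $H$ provided by Proposition \ref{propola-i}, normalized so that $\|\varphi_n\|_{L^2(\mathbb{R}^N,|x|^{-2s})}=1$, and set $u_\star:=\mathcal{E}_s\varphi_n$. Radial symmetry of $\varphi_n$ and of the Poisson kernel $\mathcal{P}_s$ imply $u_\star\in\mathcal{H}^{1,a}_{\mathrm{rad}}(\mathbb{R}^{N+1}_+)$, while $Tu_\star=\varphi_n\perp M$. By the equality statement in \eqref{FL} for the Poisson extension,
\begin{equation*}
\mathcal{F}(u_\star)\;=\;\int_{\mathbb{R}^N}|(-\Delta)^{s/2}\varphi_n|^2\,dx+\int_{\mathbb{R}^N}V\,\varphi_n^2\,dx\;=\;\mathcal{R}(\varphi_n)\;=\;\lambda_n,
\end{equation*}
which proves that the infimum is $\lambda_n$ and is attained at $u_\star$.

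The structural description of the minimizing set follows by tracing equality cases in the chain displayed above. If $u$ attains the infimum, then both $\mathcal{F}(u)\geq \mathcal{R}(Tu)$ and $\mathcal{R}(Tu)\geq \lambda_n$ must be equalities. The first equality together with the ``only if'' part of \eqref{FL} forces $u=\mathcal{E}_s f$ with $f=Tu$; the second equality, combined with the constraints $\|f\|_{L^2(|x|^{-2s})}=1$ and $f\perp M$, says that $f$ is itself a minimizer of \eqref{la-i}. A standard Lagrange-multiplier argument applied to the constrained quadratic functional then identifies the set of such minimizers $f$ with the unit sphere of $\mathrm{Ker}(H-\lambda_n)\cap M^\perp$ in $L^2(\mathbb{R}^N,|x|^{-2s})$, i.e. with linear combinations of eigenfunctions of $H$ at the eigenvalue $\lambda_n$ orthogonal to $M$. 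The converse is immediate, since any such $f$ yields $\mathcal{F}(\mathcal{E}_s f)=\mathcal{R}(f)=\lambda_n$ by the same computation.

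The main technical obstacle, to my mind, is not the variational manipulation itself but the \emph{uniqueness} part of the equality statement in \eqref{FL}: one needs that the Poisson extension is the only $\mathcal{H}^{1,a}$ function with a given trace minimizing the weighted Dirichlet integral, otherwise only the value of the infimum is pinned down and not its extremizers. In our setting this is available from the Caffarelli--Silvestre framework, and once it is in hand the rest of the argument only uses Proposition \ref{propola-i}, the radial invariance of $\mathcal{P}_s$, and the assumption $\lambda_n<\mathcal{A}_{N,s}$, which was already needed there to ensure the existence of $\varphi_n$ and to prevent loss of mass into the Hardy singularity at the origin.
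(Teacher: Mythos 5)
Your proof is correct and follows essentially the same route as the paper's: apply the trace inequality \eqref{FL} to bound $\mathcal{F}(u)$ from below by the Rayleigh quotient $\mathcal{R}(Tu)$ on $\mathbb{R}^N$, invoke Proposition \ref{propola-i} to identify the infimum as $\lambda_n$, realize the value with the Poisson extension of the eigenfunction $\varphi_n$, and characterize minimizers via the equality case of \eqref{FL}. The only difference is that you make the ``only if'' part of the attainment claim explicit and flag the dependence on uniqueness in the equality case of \eqref{FL}, which the paper leaves implicit.
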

\begin{proof} 
By \eqref{FL} we get that the infimum is bounded from bellow by
$$
\inf\bigg\{\int_{\mathbb{R}^{N}}\big(|(-\Delta)^{\frac{s}{2}}f|^2{+} V (x)|f(x)|^2\big) dx : f\in H_{\mathrm{rad}}^s(\mathbb{R}^N ), \int_{\mathbb{R}^N} |x|^{-2s}|f(x)|^2 \,dx = 1, f\perp M \bigg\},
$$
where the equality is attained if $u =  \mathcal{E}_af$. 
Thus we conclude by Proposition \ref{propola-i}.
\end{proof}

Notice now that, since, by standard regularity results and the fact that the normal derivative exists and is $(-\Delta)^s\psi$, any radial eigenfunction $\psi$  associated to the singular eigenvalue of $H = (-\Delta)^s {+} V$ is continuous in $\mathbb{R}^N\setminus\{0\}$, its extension $\mathcal{E}_a\psi$ belongs to $C^0(\overline{\mathbb{R}^{N+1}_+}\setminus\{0\})$, we can consider its nodal domains.
These nodal domains are defined as the connected components of the open set 
$$
\{(x,t)\in \mathbb{R}_+^{N+1} : (\mathcal{E}_a\psi)(x,t)\neq 0\},
$$
in the upper half-space $\mathbb{R}_+^{N+1}\setminus\{0\}$. By using Proposition \ref{Propo1} we proceed now to derive upper bounds on the number of nodal domains for extension of the eigenfunctions of $H$ to the upper half-space $\mathbb{R}_+^{N+1}$. 
That is, we get the following result that corresponds to \cite[Proposition 5.2]{Frank-Lenzmann-Silvestre} adapted to our setting.

\begin{proposition}  \label{prop53} 
Let $N\geq 1$, $0<s<1$ and $V$ given in \eqref{Uve}. Suppose that $H = (-\Delta)^s {+} V$, acting on $L^2_{\mathrm{rad}}(\mathbb{R}^N,|x|^{-2s})$, has at least $n$ eigenvalues
$$\lambda_1< \lambda_2 \leq \ldots \lambda_n <  \mathcal{A}_{N,s}.$$
If $\psi_n\in H^s_{\mathrm{rad}}(\mathbb{R}^N )$ is a real eigenfunction of $H$ with eigenvalue $\lambda_n$, then its extension $\mathcal{E}_a\psi_n$, with $a = 1-2s$, has at most $n$ nodal domains on $\mathbb{R}_+^{N+1}\setminus\{0\}$. 
\end{proposition}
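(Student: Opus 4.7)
The plan is to argue by contradiction following the classical Courant nodal domain argument, adapted to the extension problem of Caffarelli--Silvestre. Suppose, toward a contradiction, that $\mathcal{E}_a \psi_n$ has $k \geq n+1$ nodal domains $\Omega_1,\dots,\Omega_k$ in $\mathbb{R}_+^{N+1}\setminus\{0\}$. For each $i=1,\dots,n$, set $u_i := (\mathcal{E}_a\psi_n)\,\chi_{\Omega_i}$. A standard Stampacchia-type truncation argument yields $u_i\in \mathcal{H}^{1,a}_{\mathrm{rad}}(\mathbb{R}_+^{N+1})$, and the Hardy inequality \eqref{Hardy} ensures $u_i(\cdot,0)\in L^2(\mathbb{R}^N,|x|^{-2s})$.

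Since $M=\mathrm{span}\{\varphi_1,\dots,\varphi_{n-1}\}$ has dimension $n-1$, a linear dimension count provides real coefficients $c_1,\dots,c_n$, not all zero, such that the trace of $u:=\sum_{i=1}^{n}c_i u_i$ is orthogonal to $M$ in the sense of \eqref{perp}. As the supports $\Omega_i$ are pairwise disjoint, one has $\mathcal{F}(u)=\sum_i c_i^2\, \mathcal{F}(u_i)$ and $\|u(\cdot,0)\|^2_{L^2(|x|^{-2s})}=\sum_i c_i^2\,\|u_i(\cdot,0)\|^2_{L^2(|x|^{-2s})}$. On each $\Omega_i$ the function $w:=\mathcal{E}_a\psi_n$ solves $\mathrm{div}(t^a\nabla w)=0$ and meets the boundary condition $-d_s\lim_{t\to 0^+}t^a\partial_t w=(-\Delta)^s\psi_n=(\lambda_n|x|^{-2s}-V)\psi_n$ on the trace. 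Integrating by parts on $\Omega_i$ (with vanishing contribution on $\partial\Omega_i\cap\mathbb{R}_+^{N+1}$ since $w=0$ there) gives
$$\mathcal{F}(u_i) = \lambda_n\int_{\Omega_i\cap\{t=0\}}\frac{|w(x,0)|^2}{|x|^{2s}}\,dx = \lambda_n\|u_i(\cdot,0)\|^2_{L^2(|x|^{-2s})},$$
and summing in $i$ yields $\mathcal{F}(u)=\lambda_n\|u(\cdot,0)\|^2_{L^2(|x|^{-2s})}$.

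Thus $u$ attains the infimum characterizing $\lambda_n$ in Proposition \ref{Propo1}. The equality-case statement of that proposition then forces $u=\mathcal{E}_a f$ for some real eigenfunction $f\in H^s_{\mathrm{rad}}(\mathbb{R}^N)$ of $H$ with eigenvalue $\lambda_n$. By construction, however, $u$ vanishes identically on the nonempty open set $\Omega_{n+1}$, so $\mathcal{E}_a f$ vanishes on a nontrivial open subset of $\mathbb{R}_+^{N+1}$. Invoking a unique continuation principle for the degenerate elliptic operator $\mathrm{div}(t^a\nabla\cdot)$ (the weight $t^a$ lies in the Muckenhoupt class $A_2$, a setting in which unique continuation is available) then forces $u\equiv 0$ in $\mathbb{R}_+^{N+1}$, contradicting the nontriviality of $u$.

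The most delicate step will be the rigorous justification of the identity $\mathcal{F}(u_i)=\lambda_n\|u_i(\cdot,0)\|^2_{L^2(|x|^{-2s})}$: a nodal domain $\Omega_i$ may touch the singular point $\{(0,0)\}$ or be unbounded, so one has to show that no boundary contribution is dropped either at the origin or at infinity. This requires the fast-decay estimate \eqref{fastdecay}, the Hardy inequality \eqref{Hardy}, and a suitable cutoff exhaustion together with a control of the weighted normal derivative on approximating surfaces. Once this technical point, the equality-case characterization from Proposition \ref{Propo1}, and the $A_2$-weighted unique continuation are in place, the remainder of the argument reduces to the linear algebra described above.
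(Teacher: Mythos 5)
Your proof reconstructs the Courant nodal domain argument of \cite[Proposition 5.2]{Frank-Lenzmann-Silvestre}, which is precisely what the paper defers to, with exactly the two modifications the paper signals: normalizing in $L^2_{\mathrm{rad}}(\mathbb{R}^N,|x|^{-2s})$ and imposing the orthogonality \eqref{perp} in the dimension count. The approach matches; the technical delicacies you flag at the end (boundary contributions near the origin and at infinity in the integration by parts, and unique continuation for $\mathrm{div}(t^a\nabla\cdot)$, for which it in fact suffices to note the equation is uniformly elliptic on any ball away from $\{t=0\}$) are the same ones handled in \cite{Frank-Lenzmann-Silvestre}.
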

\begin{proof} 
We follows verbatim \cite[Proposition 5.2]{Frank-Lenzmann-Silvestre} with $\gamma_i$ taked such that $\|u\|_ {L^2_{\mathrm{rad}}(\mathbb{R}^N,|x|^{-2s})}=1$  and with the orthogonality $u \perp M$ defined in \eqref{perp}.
\end{proof} 

We conclude this subsection giving an estimate of the number of the sign changes of the second eigenfunction of $H$. Before that we introduce the following useful definition.  
%
\begin{definition}
Let $\psi\in C^0(\mathbb{R}^N)$ be radial and let $m\geq 1$ be an integer. We say that $\psi(r)$ changes its sign $m$ times on $(0,+\infty)$, if there exist $0 < r_1 < \ldots< r_{m+1}$ such that $\psi(r_i)\neq  0$ for $i = 1,\ldots,m + 1$ and $\mathrm{sign}(\psi(r_i)) = -\mathrm{sign}(\psi(r_{i+1}))$ for $i = 1,\ldots,m$.
\end{definition}
%
We can state now the following result.

\begin{proposition}\label{clave}
Let $N\geq 1$, $0 < s < 1$ and $V$ given in \eqref{Uve}. Consider $H = (-\Delta)^s \textcolor{black}{+} V$ acting on $L^2_{\mathrm{rad}} (\mathbb{R}^N,|x|^{-2s})$. Let $\lambda_2 < \mathcal{A}_{N,s}$ be the second eigenvalue of $H$ acting on $L^2_{\mathrm{rad}}(\mathbb{R}^N )$ and $\varphi_2\in L^2_{\mathrm{rad}} (\mathbb{R}^N,|x|^{-2s})$ is a real-valued solution of $H\varphi_2 = \lambda_2\varphi_2$. Then $\varphi_2$ changes its sign at most twice on $(0, +\infty)$.
\end{proposition}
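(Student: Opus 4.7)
The plan is to argue by contradiction via the nodal domain bound of Proposition \ref{prop53} together with a planar topological obstruction for the extension. Suppose that $\varphi_2$ changes sign at least three times on $(0,+\infty)$, so there exist $0<r_1<r_2<r_3<r_4$ with $\varphi_2(r_i)\neq 0$ and $\mathrm{sign}\,\varphi_2(r_i)=-\mathrm{sign}\,\varphi_2(r_{i+1})$, say following the pattern $+,-,+,-$. The aim is to produce at least three nodal domains of $\widetilde U:=\mathcal{E}_a\varphi_2$ in $\mathbb{R}^{N+1}_+$, contradicting Proposition \ref{prop53} applied with $n=2$ (applicable since by hypothesis $\lambda_2<\mathcal{A}_{N,s}$ and $\lambda_1<\lambda_2$ by simplicity of $\lambda_1$).

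By the regularity statement recalled just before Proposition \ref{prop53}, $\widetilde U$ belongs to $C^0(\overline{\mathbb{R}^{N+1}_+}\setminus\{0\})$ with $\widetilde U(\cdot,0)=\varphi_2$ on $\mathbb{R}^N\setminus\{0\}$. The radial symmetry of $\varphi_2$ forces $\widetilde U$ to be radial in $x$, so $\widetilde U(x,t)=u(|x|,t)$ for a continuous $u$ on $\{(r,t):r\ge 0,\,t\ge 0\}\setminus\{(0,0)\}$. By continuity, one fixes small neighborhoods $U_i$ of $(r_i,0)$ inside which $u$ carries the same sign as $\varphi_2(r_i)$, and denotes by $\Omega_i$ the nodal domain of $u$ in $\{r\ge 0,\,t>0\}$ containing $U_i\cap\{t>0\}$.

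The heart of the argument is the planar claim that $\{\Omega_1,\Omega_2,\Omega_3,\Omega_4\}$ cannot reduce to just two distinct nodal domains. If they did, sign considerations would force $\Omega_1=\Omega_3$ (positive) and $\Omega_2=\Omega_4$ (negative). Then there would exist disjoint continuous arcs $\gamma_+\subset\Omega_1$ joining $(r_1,\varepsilon)$ to $(r_3,\varepsilon)$ and $\gamma_-\subset\Omega_2$ joining $(r_2,\varepsilon)$ to $(r_4,\varepsilon)$ for sufficiently small $\varepsilon>0$. Closing $\gamma_+$ with the vertical segments $\{r_1\}\times[0,\varepsilon]$ and $\{r_3\}\times[0,\varepsilon]$ together with the horizontal segment $[r_1,r_3]\times\{0\}$ yields a Jordan curve $\mathcal{C}$ in the closed upper half-plane whose bounded complementary region contains $(r_2,\varepsilon)$ while $(r_4,\varepsilon)$ sits in the unbounded region. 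Hence $\gamma_-$ must intersect $\mathcal{C}$; but it cannot cross $\gamma_+$ (different nodal domains), cannot cross the two vertical segments (where $u>0$ for small $\varepsilon$ by continuity), and cannot meet $\{t=0\}$ (since $\gamma_-\subset\{t>0\}$). The contradiction shows that $u$ has at least three distinct nodal domains in $\{r\ge 0,\,t>0\}$.

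For $N\ge 2$, the radial lift of each such planar nodal domain is connected (the fibers $\{|x|=r\}$ are connected spheres for $r>0$), so $\widetilde U$ inherits at least three nodal domains in $\mathbb{R}^{N+1}_+$; for $N=1$ the same argument is carried out directly in $\{(x,t):t>0\}$ using the evenness of $\widetilde U$ in $x$, and each planar component still produces at least one full nodal domain of $\widetilde U$. Either way $\widetilde U$ has at least three nodal domains, contradicting Proposition \ref{prop53}. I expect the main obstacle to be the rigorous execution of the planar topological step, namely verifying that the arcs $\gamma_\pm$ can indeed be chosen to stay inside the alleged nodal domains all the way down to the boundary $\{t=0\}$ away from the singular point at the origin, and that the Jordan-curve construction remains valid despite the potentially intricate geometry of the nodal set of $u$ near $\{t=0\}$ where the Hardy-type singular weight is concentrated.
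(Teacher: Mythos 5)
Your argument is correct and is essentially the paper's own proof: the paper simply invokes the argument of \cite[Proposition 5.3]{Frank-Lenzmann-Silvestre} with \cite[Proposition 5.2]{Frank-Lenzmann-Silvestre} replaced by Proposition \ref{prop53}, and that argument is exactly your contradiction scheme (three sign changes of $\varphi_2$ force at least three nodal domains of the extension $\mathcal{E}_a\varphi_2$ in the half-plane/half-space via the Jordan-curve interlacing of the arcs $\gamma_\pm$, contradicting the bound with $n=2$, which applies since $\lambda_1<\lambda_2<\mathcal{A}_{N,s}$). The technical points you flag (arcs staying in $\{t>0\}$, continuity of the extension away from the origin, and the reduction to the radial half-plane) are handled exactly as in \cite{Frank-Lenzmann-Silvestre}, so no new difficulty arises from the Hardy-weight setting.
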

\begin{proof}
The proof follows the ideas developed in \cite[Proposition 5.3]{Frank-Lenzmann-Silvestre} by replacing \cite[Proposition 5.2]{Frank-Lenzmann-Silvestre} by Proposition \ref{prop53}. 
\end{proof}

\subsection{First eigenvalue for the transformed problem}

We study now the linearized limit problem by introducing the linear operator 

$$
\widehat{\mathfrak{L}}:   {\widehat{\mathcal{H}}^s}(\mathbb{R})\rightarrow L^{2}(\mathbb{R}),
$$

where $\widehat{\mathfrak{L}}=\widehat{\mathfrak{L}}_{\widehat{U}_{\alpha,s}}$ was defined in  \eqref{L_transformado} and
\begin{equation}\label{el_espacio}
\widehat{\mathcal{H}^s}(\mathbb{R}):=\left\{u:\mathbb{R}\to\mathbb{R}:\,\|u\|_{\widehat{\mathcal{H}^s}(\mathbb{R})}<\infty\right\},
\end{equation}
where 
$$
\|u\|^2_{\widehat{\mathcal{H}^s}(\mathbb{R})}:=C_{N,s}\int_{\mathbb{R}}\int_{\mathbb{R}}(u(\kappa)-u(\tau))^2\mathcal{K}(\kappa-\tau)\, d\tau\, d\kappa +\|u\|_{L^{2}(\mathbb{R})}^2,
$$
with $\mathcal{K}$ given in \eqref{kernel}. \textcolor{black}{Along all this section, since there is not any doubt about it, we will denote $\widehat{\mathcal{H}}(\mathbb{R})$ instead of $\widehat{\mathcal{H}^s}(\mathbb{R})$ in order to simplify the notation.}
By \cite[Proposition 4.2-4.3]{Barrios-Quaas} in the case $0<s<\frac{1}{2}$, we know that the space $\widehat{\mathcal{H}}(\mathbb{R})$ is continuously embedded in $L^q(\mathbb{R})$ for every $1\leq q\leq {{\frac{2}{1-2s}:=2^{**}_{s}}}$ and compactly if $1\leq q < 2_s^{**}$. If $s\geq\frac{1}{2}$ we highlight that there is no critical Sobolev exponent in dimension one. 
Defining now the transformed functional
\begin{equation}\label{la00}
\begin{array}[t]{lll}
\widehat{\mathcal{R}}(\varphi)\!\!&\!\!:= \displaystyle {{C_{N,s}}} \iint_{\mathbb{R}^2} (\varphi(\kappa)-\varphi(\tau))^2 \mathcal{K}(\kappa-\tau)\,d\tau\,d\kappa+\int_{\mathbb{R}}\left({\mathcal{A}_{s,N}}-p^*_{\alpha,s} |\widehat{U}_{\alpha,s}|^{p^*_{\alpha,s}-1}\right)\varphi^2(\kappa)\,d\kappa\medskip\\
\!\!&\!\!\,\,=\langle \varphi, \widehat{\mathfrak{L}}(\varphi)\rangle,\quad \varphi\in \widehat{\mathcal{H}}(\mathbb{R}).
\end{array}
\end{equation}
and the corresponding transformed first eigenvalue
\begin{equation}\label{la11}
\widehat{\lambda}_1:=\inf_{\widehat{v}\in \widehat{\mathcal{H}}(\mathbb{R})\setminus\{0\}} \frac{\widehat{\mathcal{R}}(\widehat{v})}{\|\widehat{v}\|_{L^2(\mathbb{R})}^{2}},
\end{equation}
proceeding similarly to the proof of Proposition \ref{saleg-1}, we get the next
\begin{proposition}\label{saleg-00}
The following statements hold:
\begin{enumerate}
\item[{\em i})] $\widehat{\lambda}_1<0$.\smallskip
\item[{\em ii})] Every minimizing sequence for (\ref{la11}) has a subsequence
which weakly converges in $\widehat{\mathcal{H}}(\mathbb{R})$, and strongly in $L^r(-R,R)$ for $1\leq r<2^{**}_s$ and every $R>0$.\smallskip 
\item[{\em iii})] There exists a unique positive minimizer $\widehat{\varphi}_{1}$ for (\ref{la11}). Moreover, $\widehat{\lambda}_1$ is an eigenvalue of $\widehat{\mathfrak{L}}$ and $\widehat{\varphi}_1$ is an eigenfunction associated to $\widehat{\lambda}_1$.
\end{enumerate}
\end{proposition}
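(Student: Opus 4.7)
The plan is to follow the three-step scheme of Proposition \ref{saleg-1} in the transformed one-dimensional setting, exploiting two simplifications: the Rayleigh quotient \eqref{la11} has no Hardy weight in its denominator, and the transformed potential carries no singularity. Indeed, combining the change of variables \eqref{U_transformada} with the fast decay \eqref{fastdecay}, one checks that $\widehat{U}_{\alpha,s}$ is bounded on $\mathbb{R}$ and decays like $e^{-|\kappa|(N-2s)/2}$ as $|\kappa|\to\infty$, so that $p^*_{\alpha,s}\widehat{U}_{\alpha,s}^{p^*_{\alpha,s}-1}\in L^{\infty}(\mathbb{R})\cap L^{q}(\mathbb{R})$ for every $q\ge 1$.

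For item (i), I would test the identity $\widehat{\mathfrak{L}}(\widehat{U}_{\alpha,s})=(1-p^*_{\alpha,s})\widehat{U}_{\alpha,s}^{p^*_{\alpha,s}}<0$, which follows from \eqref{transformado} and \eqref{L_transformado}, against $\widehat{U}_{\alpha,s}$ itself to obtain $\widehat{\mathcal{R}}(\widehat{U}_{\alpha,s})<0$, hence $\widehat{\lambda}_1<0$. For item (ii), take a minimizing sequence $\{\widehat{\phi}_{n}\}$ normalized by $\|\widehat{\phi}_{n}\|_{L^{2}(\mathbb{R})}=1$. A direct identity expressing $\|\widehat{\phi}_{n}\|^{2}_{\widehat{\mathcal{H}}(\mathbb{R})}$ in terms of $\widehat{\mathcal{R}}(\widehat{\phi}_n)$ and a term bounded by $\|p^*_{\alpha,s}\widehat{U}_{\alpha,s}^{p^*_{\alpha,s}-1}\|_{L^{\infty}(\mathbb{R})}\|\widehat{\phi}_{n}\|^{2}_{L^{2}(\mathbb{R})}$ immediately produces a uniform bound in $\widehat{\mathcal{H}}(\mathbb{R})$, bypassing the delicate truncation argument needed in Proposition \ref{saleg-1} for the case $-2s<\alpha<0$. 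The weak convergence in $\widehat{\mathcal{H}}(\mathbb{R})$ and the local strong convergence in $L^{r}(-R,R)$ for $1\le r<2^{**}_{s}$ follow, up to subsequence, from the compact embedding result \cite[Proposition 4.2--4.3]{Barrios-Quaas}.

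For item (iii), weak lower semicontinuity of the seminorm together with the convergence
\[
\int_{\mathbb{R}}\widehat{U}_{\alpha,s}^{p^*_{\alpha,s}-1}\widehat{\phi}_{n}^{2}\,d\kappa \;\longrightarrow\; \int_{\mathbb{R}}\widehat{U}_{\alpha,s}^{p^*_{\alpha,s}-1}\widehat{\phi}^{2}\,d\kappa
\]
produces a minimizer $\widehat{\varphi}_{1}$, which is therefore an eigenfunction of $\widehat{\mathfrak{L}}$ with eigenvalue $\widehat{\lambda}_{1}$. For the positivity, I would use the pointwise inequality $\bigl||\widehat{v}(\kappa)|-|\widehat{v}(\tau)|\bigr|\le|\widehat{v}(\kappa)-\widehat{v}(\tau)|$ together with the positivity of the kernel $\mathcal{K}$ defined in \eqref{kernel}, which yield $\widehat{\mathcal{R}}(|\widehat{\varphi}_{1}|)\le \widehat{\mathcal{R}}(\widehat{\varphi}_{1})$, so $\widehat{\varphi}_{1}$ may be taken nonnegative. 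Uniqueness is then a verbatim transcription of the argument in Proposition \ref{saleg-1}: if $\widehat{\varphi}$ is any other normalized nonnegative minimizer, the difference $\widehat{\varphi}_{1}-\widehat{\varphi}$ is again a minimizer, hence can be taken nonnegative, forcing $\widehat{\varphi}_{1}\ge\widehat{\varphi}$ a.e., and equal $L^{2}$-norms yield equality.

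The only slightly delicate point, and hence the main (minor) obstacle, is the convergence of the potential integral displayed above. It is handled by splitting into $[-R,R]$ and its complement: on the bounded interval one uses the local strong $L^{2}$-convergence from (ii), while on $|\kappa|>R$ one exploits the genuine exponential decay of $\widehat{U}_{\alpha,s}^{p^*_{\alpha,s}-1}$ to make the tail uniformly small in $n$ for $R$ large enough. This exponential decay is precisely the qualitative feature that the Emden--Fowler substitution buys us over the original singular setting, and it is what allows the transformed proof to proceed cleanly without any Hardy-type inequality.
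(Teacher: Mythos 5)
Your proposal is correct and follows essentially the same route as the paper: the same test function $\widehat{U}_{\alpha,s}$ for $\widehat{\lambda}_1<0$, the same coercivity argument via the boundedness of $p^*_{\alpha,s}\widehat{U}_{\alpha,s}^{p^*_{\alpha,s}-1}$ (which is exactly why the paper's proof here is simpler than that of Proposition \ref{saleg-1}), the same split into $(-R,R)$ and its complement using the exponential decay of $\widehat{U}_{\alpha,s}$ inherited from \eqref{fastdecay} via \eqref{U_transformada}, and the same uniqueness argument transplanted verbatim from Proposition \ref{saleg-1} \emph{iii)}. The only step you leave implicit, ruling out that the weak limit vanishes, is handled in the paper by combining $\widehat{\lambda}_1<0$ with Fatou's Lemma and the convergence of the potential integral, and this follows directly from the ingredients you already have.
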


\begin{remark}\label{auxi}
{{We notice that, by using a symmetric decreasing rearrangement, see \cite[Collorary 2.3]{lieb},  we can always assume that $\widehat{\varphi}_{1}$ is an even function.}}
\end{remark}
\begin{proof} 
As we did in Proposition \ref{saleg-1} we prove each part separately. 
\begin{enumerate}
\item[{\em i)}] Since
\begin{equation}\label{flower}
\widehat{\mathfrak{L}}(\widehat{U}_{\alpha,s})=  (1- p^*_{\alpha,s})\widehat{U}_{\alpha,s}^{p^*_{\alpha,s}}<0,
\end{equation}
we clearly get that $\widehat{\lambda}_1<0$.\smallskip
\item[{\em ii)}] Consider now a sequence $\{\phi_n\}$ in $\widehat{\mathcal{H}}(\mathbb{R})$ with $\|\phi_n\|_{L^2(\mathbb{R})}=1$, which minimizes (\ref{la11}). 
Thus, given $\varepsilon>0$, fixed but arbitrary small, there exists $n_0$ such that 
%
\begin{eqnarray*}
{{C_{N,s}}}\iint_{\mathbb{R}^2} (\phi_n(\kappa)-\phi_n(\tau))^2 \mathcal{K}(\kappa-\tau)\,d\tau\,d\kappa+\mathcal{A}_{s,N}&\leq&(\varepsilon+\widehat{\lambda}_1)+p^*_{\alpha,s}\int_{\mathbb{R}} |\widehat{U}_{\alpha,s}|^{p^*_{\alpha,s}-1}\phi_n^2\, d\kappa\\
&\leq& C\neq C(n),
\end{eqnarray*}
if  $n\geq n_0$ by using the fact that $\widehat{U}_{\alpha,s}$ is bounded and $\lambda_1<0$. 
Therefore, up to a subsequence, 
\begin{eqnarray*}
\displaystyle \phi_{n}&\rightharpoonup& \phi \qquad \mbox{ weakly in } \widehat{\mathcal{H}}(\mathbb{R}), \nonumber\\
\displaystyle \phi_{n}&\to& \phi \qquad \mbox{ strongly in } L^{r}(-R,R), \quad 1\leq r < 2^{**}_{s},\\
\displaystyle \phi_{n}&\to& \phi \qquad \mbox{ a.e. in } (-R,R),\mbox{ for every $R>0$}.\nonumber
\end{eqnarray*}
Since, doing in a similar way as in \eqref{eta}, we also get that
$$
\int_{\mathbb{R}} p^*_{\alpha,s} |\widehat{U}_{\alpha,s}|^{p^*_{\alpha,s}-1} \phi_n^2d\kappa \rightarrow \int_{\mathbb{R}} p^*_{\alpha,s} |\widehat{U}_{\alpha,s}|^{p^*_{\alpha,s}-1} \phi^2d\kappa,\quad n\rightarrow +\infty,
$$
by the hypothesis $\|\phi_n\|_{L^2(\mathbb{R})}=1$ and the lower semicontinuity, the previous limit implies
$$
\widehat{\lambda}_1\|\phi\|^2_{L^{2}(\mathbb{R})}\leq \widehat{\mathcal{R}}(\phi)=\widehat{\mathcal{R}}(\liminf_{n\to\infty}\phi_n)\leq \liminf_{n\to\infty}\widehat{\mathcal{R}}(\phi_n)= \widehat{\lambda}_1.
$$
Thus, by using Fatou's Lemma and the fact that $\widehat{\lambda}_1<0$, we get that $\phi$, denoted from now on $\widehat{\varphi}_1$, is the desirable minimizer of \eqref{la11}.\smallskip
\item[{\em iii)}] Let $\widehat{\varphi}$ be another a minimizer of \eqref{la11}. Follows, almost verbatim, the computations done for proving Proposition \ref{saleg-1} {\it iii)} in the transformed problem, we get that $\widehat{\varphi}(x)=\widehat{\varphi}_1(x)$ a.e. $x\in\mathbb{R}$, and, therefore, we conclude that $\widehat{\lambda}_1$ is simple as wanted.
\end{enumerate}
\end{proof}
{{
\textcolor{black}{To conclude this subsection we want to highlight that if} we consider \textcolor{black}{$n\geq 1$ an arbitrary natural number and} $\widehat{M}$ a $(n-1)$-dimensional subspace spanned by the eigenfunctions corresponding to the eigenvalues $\widehat{\lambda}_1, \dots, \widehat{\lambda}_{n-1}$, {\it i. e.} {{$ \widehat{M}=\mathrm{span}\{\widehat{\varphi}_1,\ldots ,\widehat{\varphi}_{n-1}\} $}}, \textcolor{black}{with}%
\begin{equation*}
\widehat{\lambda}_n:=\inf_{\scriptsize
{\begin{array}[c]{cc} 
v \in  \widehat{\mathcal{H}}(\mathbb{R})\setminus\{0\} \\
v\perp \widehat{M} 
\end{array}}}
\frac{\widehat{\mathcal{R}}(v)}{\|v\|^2_{L^2(\mathbb{R})}},
\end{equation*}
\textcolor{black}{similar results to those obtained in Propositions \ref{propovalprop2}-\ref{propola-i} can be proved for $\widehat{\mathfrak{L}}$.} That is, \textcolor{black}{if $\widehat{\lambda}_n<\mathcal{A}_{N,s}$ then $\widehat{\lambda}_n$ is an eigenvalue associated to an eigenfunction}.

For that we show \textcolor{black}{the following fundamental auxiliary.}
\begin{lemma}\label{lema4}
Let $\lambda\in \mathbb{R}$. 
Then, there exists a function $\widehat{v}$ that solves
$$
\widehat{\mathfrak{L}}\widehat{v}= \lambda \widehat{v}\quad \mbox{ in }\mathbb{R},
$$
if and only if there exists  a radial function $v$ that solves 
$$
\mathfrak{L} v=\frac{\lambda}{|x|^{2s}} v\quad \mbox{in } \mathbb{R}^N.
$$
\end{lemma}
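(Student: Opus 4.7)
The plan is to prove the equivalence by direct computation using the Emden--Fowler change of variables \eqref{U_transformada}, which defines the whole setup of the transformed problem \eqref{transformado}. Specifically, I would set $\kappa = \ln |x|$ and associate to any radial function $v$ on $\mathbb{R}^N$ the one-dimensional function $\widehat{v}$ determined by
\begin{equation*}
v(x) = |x|^{-\frac{N-2s}{2}} \, \widehat{v}(\ln |x|),
\end{equation*}
and vice versa. The pairing $v \leftrightarrow \widehat{v}$ is clearly a bijection between radial functions on $\mathbb{R}^N \setminus \{0\}$ and functions on $\mathbb{R}$, so it suffices to prove the equivalence of the eigenvalue equations pointwise.

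The key analytical input is the identity
\begin{equation*}
(-\Delta)^s v(x) \; = \; |x|^{-\frac{N+2s}{2}} \bigl[ \mathcal{T}_s \widehat{v}(\ln |x|) + \mathcal{A}_{s,N}\, \widehat{v}(\ln |x|) \bigr],
\end{equation*}
which is precisely the Emden--Fowler transform of the fractional Laplacian on radial functions and is the very reason the kernel $\mathcal{K}$ in \eqref{kernel} and the constant $\mathcal{A}_{s,N}$ in \eqref{AA} are defined as they are; this identity is established in the references cited in the paper (see \cite{Manuel, beckner, Frank-Seiringer, Herbst}) and was already used to pass from \eqref{P} to \eqref{transformado}. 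Granted this, the bulk of the proof reduces to a short bookkeeping of exponents.

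Next I would compute the potential term. Using the transformation rule applied to $U_{\alpha,s}$ itself, one finds
\begin{equation*}
U_{\alpha,s}^{p^*_{\alpha,s}-1}(r) \; = \; r^{-(2s+\alpha)} \, \widehat{U}_{\alpha,s}^{p^*_{\alpha,s}-1}(\ln r),
\end{equation*}
since $\tfrac{N-2s}{2}(p^*_{\alpha,s}-1) = 2s+\alpha$. Multiplying by $|x|^{\alpha}$ and by $v(x)$, the factors $|x|^{\alpha}$ and $r^{-(2s+\alpha)}$ combine with $r^{-\frac{N-2s}{2}}$ to give the universal prefactor $|x|^{-\frac{N+2s}{2}}$, so that
\begin{equation*}
p^*_{\alpha,s}\, |x|^{\alpha} U_{\alpha,s}^{p^*_{\alpha,s}-1}\, v(x) \; = \; |x|^{-\frac{N+2s}{2}} \, p^*_{\alpha,s}\, \widehat{U}_{\alpha,s}^{p^*_{\alpha,s}-1}(\ln|x|)\, \widehat{v}(\ln |x|).
\end{equation*}
Subtracting this from the previous display yields the master identity
\begin{equation*}
\mathfrak{L} v(x) \; = \; |x|^{-\frac{N+2s}{2}} \, \widehat{\mathfrak{L}}\, \widehat{v}(\ln |x|).
\end{equation*}

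Finally, I would observe that $\tfrac{\lambda}{|x|^{2s}} v(x) = \lambda\, |x|^{-\frac{N+2s}{2}} \widehat{v}(\ln |x|)$, so the two equations match pointwise: $\mathfrak{L} v = \tfrac{\lambda}{|x|^{2s}} v$ on $\mathbb{R}^N \setminus \{0\}$ if and only if $\widehat{\mathfrak{L}} \widehat{v} = \lambda \widehat{v}$ on $\mathbb{R}$. The main (essentially unique) obstacle is the verification of the key transformation identity for $(-\Delta)^s$ on radial functions; since this identity is the defining property of the operator $\mathcal{T}_s + \mathcal{A}_{s,N}$ and is already invoked at the level of the nonlinear problem to pass from \eqref{P} to \eqref{transformado}, I would simply cite it and treat the remainder as a direct algebraic check.
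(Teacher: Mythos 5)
Your proposal is correct and is essentially the paper's own argument: the paper likewise relies on the cited identity transforming $(-\Delta)^s$ on radial functions (written there in two stages, first $v=r^{-\frac{N-2s}{2}}w(r)$ producing the operator $\mathcal{S}+\mathcal{A}_{s,N}$ with the common factor $r^{-\frac{N+2s}{2}}$, then the substitution $r=e^{\kappa}$ turning $\mathcal{S}$ into $\mathcal{T}_s$), together with the same exponent bookkeeping $\frac{N-2s}{2}(p^*_{\alpha,s}-1)=2s+\alpha$ for the potential and the factor $|x|^{-2s}$ on the right-hand side. Your one-step version of the change of variables is just a composition of these two stages, so there is no substantive difference.
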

\begin{proof}
Let $v$ be a radial function such that $\mathfrak{L}v=(-\Delta )^s v -p^*_{\alpha,s} |x|^{\alpha}U_{\alpha,s}^{p^*_{\alpha,s}-1}v=\frac{\lambda}{|x|^{2s}}v$ for every $x\in\mathbb{R}^{N}$. 
Without loss of generality, for every $r>0$ we may consider 
$$
v(x)=r^{\beta}w(r)\quad \mbox{and}\quad U_{\alpha,s}(r)=r^{\beta}W_{\alpha,s}(r),
$$
where $\beta=-\frac{N-2s}{2}$. 
Therefore, since
$$
(-\Delta )^s v= r^{\beta-2s} (\mathcal{S}w(r)+\mathcal{A}_{s,N} w(r)),\quad r>0,
$$
where{{
$$
\mathcal{S} v(r):=C_{N,s}\int_0^{\infty} \int_{\mathbb{S}^{N-1}}  {\rho}^{N-1+\beta} \frac{v(r)-v(r {\rho})}{|1+ {\rho}^{\,2}-2  {\rho} 
\langle \theta,\sigma\rangle |^{\frac{N+2s}{2}}} \, d\sigma\,d {\rho},\quad r>0,
$$
and}} $\mathcal{A}_{s,N}$ is given by (\ref{AA}), we get
$$
r^{\beta-2s} (\mathcal{S}w(r)+\mathcal{A}_{s,N} w(r))-p^*_{\alpha,s} r^{\alpha+\beta+\beta(p^*_{\alpha,s}-1)}W_{\alpha,s}^{p^*_{\alpha,s}-1}w(r) =\lambda r^{\beta-2s} w(r),\quad r>0.
$$
That is,
$$
\mathcal{S}w(r)+\mathcal{A}_{s,N} w(r)-p^*_{\alpha,s} W_{\alpha,s}^{p^*_{\alpha,s}-1}w(r) =\lambda  w(r),\quad r>0.
$$
Doing now the Emden-Fowler transformation $r=e^{\kappa}$ we obtain for $w(e^{\kappa})=\widehat{v}(\kappa)$, 
$$
\mathcal{T}_{s} \widehat{v}(\kappa) +(\mathcal{A}_{s,N} -p^*_{\alpha,s} \widehat{U}_{\alpha,s}^{p^*_{\alpha,s}-1} )\widehat{v}(\kappa) =\lambda  \widehat{v}(\kappa),\quad r>0,
$$
where the operator $\mathcal{T}_s$ is given by (\ref{TT}) and 
$$
\widehat{U}_{\alpha,s}(\kappa)=W_{\alpha,s}(e^{\kappa}).
$$
The reverse follows in the same way so we omit the details and the proof finishes. 
\end{proof}}}

\subsection{On the value of the second eigenvalue}
Let us consider the energy functional
$$
J(u)=\frac{{{C_{N,s}}}}{2}\iint_{\mathbb{R}^2} (u(\tau)-u(\kappa))^2\mathcal{K}(t-s)\,\,d\tau\,d\kappa+\frac{\mathcal{A}_{s,N}}{2}\int_{\mathbb{R}}u^2\,d\kappa -\frac{1}{p^*_{\alpha,s}+1}\int_{\mathbb{R}} u^{p^*_{\alpha,s}+1}\,d\kappa
$$
and the {{closed}} subspace 
$$
S:=\left\{ u\in \widehat{\mathcal{H}}\,:\, \langle J'(u),u\rangle=0\right\}:=\left\{ u\in \widehat{\mathcal{H}}\,:\, G(u)=0\right\}.
$$
We will say that
%
$$
v\in\widehat{\mathcal{H}} \mbox{ is a ground state solution if }\inf_{u\in S} J(u) =J(v).
$$
It is important to bear in mind that, by using symmetric decreasing rearrangement we may assume that a ground state solution is even and decreasing (see Corollary  2.3 of \cite{lieb}). 
Moreover, we also notice that $\widehat{U}_{\alpha,s}$ is a ground state solution (see \cite{Felmer, Barrios-Quaas}). The main ideas behind the existence of this ground state used in \cite{Felmer} are the Ekeland variational principle and the compactness concentration done by Beresticky and Lions given the existence of solutions of
\begin{equation}\label{prr}
\mathcal{T}_{s}v+ \mathcal{A}_{s,N} v = v^{p}\quad \mbox{ in }\mathbb{R},
\end{equation}
for any $1<p<2^{**}_s-1$, so that, in particular for \eqref{transformado}. 

We establish now the fundamental result regarding with the second eigenvalue of the transformed problem.
\begin{lemma}\label{lanuestravale}
If we define
\begin{equation}\label{la22}
\widehat{\lambda}_2:=\inf_{\scriptsize
{\begin{array}[c]{cc}\widehat{v}\in \widehat{\mathcal{H}}(\mathbb{R})\setminus\{0\}\\ \widehat{v}\perp \widehat{\varphi}_1\end{array}}} \frac{\widehat{\mathcal{R}}(\widehat{v})}{\|\widehat{v}\|_{L^2(\mathbb{R})}^{2}},
\end{equation}
where 
$$
\widehat{v}\perp \widehat{\varphi}_1:=\int_{\mathbb{R}} \widehat{v}\widehat{\varphi}_1=0,
$$
then $ \widehat{\lambda}_2\geq 0.$ That is, $N(\widehat{\mathfrak{L}}_{\widehat{U}_{\alpha,s}})=1$.
\end{lemma}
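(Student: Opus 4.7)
\smallskip

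\noindent \textbf{Proof plan.} The strategy is the classical argument that the second variation of the Euler--Lagrange functional at a ground state solution has Morse index exactly one. The excerpt already guarantees the two ingredients we need: first, $\widehat{U}_{\alpha,s}$ is a ground state of $J$ on the Nehari manifold $S$ (stated explicitly before the lemma, referring to \cite{Felmer, Barrios-Quaas}); and second, the first eigenvalue satisfies $\widehat{\lambda}_1<0$ by Proposition \ref{saleg-00}.

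First I would compute the tangent space to $S$ at $\widehat{U}_{\alpha,s}$. Since $G(u)=\langle J'(u),u\rangle$, a direct computation using the equation $\mathcal{T}_{s}\widehat{U}_{\alpha,s}+\mathcal{A}_{s,N}\widehat{U}_{\alpha,s}=\widehat{U}_{\alpha,s}^{p^*_{\alpha,s}}$ gives
$$
\langle G'(\widehat{U}_{\alpha,s}), v\rangle=(1-p^*_{\alpha,s})\int_{\mathbb{R}}\widehat{U}_{\alpha,s}^{p^*_{\alpha,s}}\,v\,d\kappa,
$$
so that the tangent space $T_{\widehat{U}_{\alpha,s}}S$ is the codimension one subspace
$$
T:=\Big\{v\in\widehat{\mathcal{H}}(\mathbb{R}):\int_{\mathbb{R}}\widehat{U}_{\alpha,s}^{p^*_{\alpha,s}}\,v\,d\kappa=0\Big\}.
$$
Since the Lagrange multiplier for a Nehari minimizer vanishes, $\widehat{U}_{\alpha,s}$ is an unconstrained critical point of $J$, and the second order necessary condition for a minimum of $J|_S$ becomes
$$
J''(\widehat{U}_{\alpha,s})[v,v]=\widehat{\mathcal{R}}(v)\ge 0\qquad\text{for every }v\in T.
$$

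Next I would argue by contradiction. Suppose $\widehat{\lambda}_2<0$. Then the minimizer $\widehat{\varphi}_2$ of (\ref{la22}) exists (by the analogue of Propositions \ref{propovalprop2} and \ref{propola-i} for $\widehat{\mathfrak{L}}$ mentioned after Proposition \ref{saleg-00}, since $\widehat{\lambda}_2<0<\mathcal{A}_{N,s}$), is an eigenfunction of $\widehat{\mathfrak{L}}$ with eigenvalue $\widehat{\lambda}_2$, and satisfies $\widehat{\varphi}_2\perp\widehat{\varphi}_1$ in $L^{2}(\mathbb{R})$. On the two dimensional subspace $V:=\mathrm{span}\{\widehat{\varphi}_1,\widehat{\varphi}_2\}$ the quadratic form $\widehat{\mathcal{R}}$ is negative definite, because for $v=a\widehat{\varphi}_1+b\widehat{\varphi}_2$ orthogonality and the eigenvalue relations yield
$$
\widehat{\mathcal{R}}(v)=\widehat{\lambda}_1 a^2\|\widehat{\varphi}_1\|_{L^2}^2+\widehat{\lambda}_2 b^2\|\widehat{\varphi}_2\|_{L^2}^2<0
$$
whenever $(a,b)\neq (0,0)$.

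Finally I would exploit the dimension count: $V$ has dimension $2$ and $T$ has codimension $1$ in $\widehat{\mathcal{H}}(\mathbb{R})$, so $V\cap T$ has dimension at least $1$. Picking any nonzero $v_{0}\in V\cap T$ produces an element of the tangent space to the Nehari manifold at which $\widehat{\mathcal{R}}(v_{0})<0$, contradicting the necessary second order condition for the ground state. Therefore $\widehat{\lambda}_2\geq 0$, and combined with $\widehat{\lambda}_1<0$ this yields $N(\widehat{\mathfrak{L}}_{\widehat{U}_{\alpha,s}})=1$.

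The only genuinely delicate point is the existence/regularity of $\widehat{\varphi}_2$ when one only assumes $\widehat{\lambda}_2<0$: one needs the variational characterization of the second eigenvalue to behave well under the constraint $\widehat{\lambda}_2<\mathcal{A}_{N,s}$, which here is automatic since we are assuming $\widehat{\lambda}_2<0<\mathcal{A}_{N,s}$. Everything else is a direct transcription of the standard Morse--index--of--a--ground--state argument to the transformed operator $\widehat{\mathfrak{L}}_{\widehat{U}_{\alpha,s}}$.
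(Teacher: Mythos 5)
Your proof is correct and follows essentially the same route as the paper: both use that $\widehat{U}_{\alpha,s}$ is a Nehari-manifold ground state, that the second variation $\widehat{\mathcal{R}}=J''(\widehat{U}_{\alpha,s})$ is nonnegative on the codimension-one tangent space $T$, and a dimension count to intersect a would-be two-dimensional negative subspace with $T$. The only cosmetic difference is that the paper invokes the Courant--Fischer minimax characterization of $\widehat{\lambda}_2$ directly (which avoids needing the existence of a second eigenfunction), whereas you frame it as a contradiction via the minimizer $\widehat{\varphi}_2$; both are valid here since $\widehat{\lambda}_2<0<\mathcal{A}_{N,s}$ guarantees $\widehat{\varphi}_2$ exists by the results quoted after Proposition~\ref{saleg-00}.
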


\begin{proof} 
The proof is quite standard in Calculus of Variation (see for example, \cite{Pacella}). 
In fact, let us consider
\begin{equation}\label{rat}
{\widehat{\mathcal{R}}(\varphi)=J''(\widehat{U}_{\alpha,s})[\varphi,\varphi]\quad \mbox{for all }\varphi\in T:=\left\{w\in \widehat{\mathcal{H}}\,:\, \langle G'(\widehat{U}_{\alpha,s}),w\rangle=0\right\}}.\end{equation}
To obtain the desirable sign of the second eigenvalue will be enough to prove that {{
\begin{equation}
\label{claim}J''(\widehat{U}_{\alpha,s})[\varphi,\varphi]\geq 0, \quad \varphi\in T.
\end{equation}
Indeed,}} if \eqref{claim} is satisfies then, since $\widehat{\lambda}_2$ may be characterized as{{
$$
\widehat{\lambda}_2= \inf_{W\subset \widehat{\mathcal{H}}(\mathbb{R})} \sup_{w\in W} \frac{\widehat{\mathcal{R}}(w)}{\|w\|_{L^2(\mathbb{R})}^{2}},
$$
with $W$ any vector space of dimension two.  
Since by the fact that $\langle G'(\widehat{U}_{\alpha,s}),\widehat{U}_{\alpha,s}\rangle \neq 0$, $T$ has codimension 1, we conclude by taking
$$
W:=\mathrm{span}\{\widehat{U}_{\alpha,s},\varphi\},
$$
with}} $\varphi\in T$. So let us prove \eqref{claim}. 
For that we consider $\varphi\in T$. 
Then there exists a curve $\gamma:(-\varepsilon,\varepsilon)\rightarrow S$ such that $\gamma'(0)=\varphi$ and $\gamma(0)=\widehat{U}_{\alpha,s}$. 
Observe that by a Taylor expansion, since $\widehat{U}_{\alpha,s}$ minimizes the functional in $S$, we get
$$
0\leq J(\gamma(t))-J(\gamma(0))=\langle J'(\widehat{U}_{\alpha,s}),\varphi\rangle t+J''(\widehat{U}_{\alpha,s})[\varphi,\varphi]\frac{t^2}{2}+\langle J'(\widehat{U}_{\alpha,s}),\gamma''(0)\rangle \frac{t^{2}}{2}+o(t^2).
$$
Thus, 
$$
J''(\widehat{U}_{\alpha,s})[\varphi,\varphi](1+o(1))\geq 0,\quad \mbox{where $o(1)\rightarrow 0$ as $t\rightarrow 0$},
$$
so that, the claim, and so the whole proof, follows. 
\end{proof}

\begin{remark}\label{rema}
{\it i)} {{Let $u\in {H^s_{\mathrm{rad}}(\mathbb{R}^N)}$ and $\widehat{u}\in \widehat{\mathcal{H}}$}} the corresponding transformed function. 
Then
$$ 
{\int_{\mathbb{R}} \widehat{u}\widehat{\varphi}_1=0 \Leftrightarrow \int_{\mathbb{R}^N} \frac{u \varphi_1}{|x|^{2s}}=0.}
$$
{\it ii)} If there exists $\lambda_1\leq\lambda_{k_0}<0$ a negative eigenvalue of $\mathfrak{L}$,  then $\lambda_{k_0}=\lambda_1$. 
Indeed, if $\lambda_{k_0}<0$ would be an eigenvalue associated to the eigenfunction $\phi_2$, by Lemma \ref{lema4}, it is clear that $\widehat{\phi}_2$ will be an eigenfunction of  $\widehat{\mathfrak{L}}$ associated to the second eigenvalue. 
By the previous proposition we get that $\lambda_{k_0}\geq 0$  that is a contradiction.
\end{remark}
The last step to prove the Theorem \ref{nodegenerancia_extendido} is the next lemma that corresponds to the adapted version in our framework of \cite[Lemma C.3]{Frank-Lenzmann}. 
The tools that we will use to prove it are different to the ones developed in \cite{Frank-Lenzmann} because the nonlocal operator $\mathcal{T}_{s}$ given in \eqref{TT} is not directly define by a semigroup that match well with the Fourier Transform, that is the key ingredient of the proof of R. Frank and E. Lenzmann.{{

\begin{lemma}\label{final}
 If $\widehat{\varphi}_2\in  \widehat{\mathcal{H}}$ is an odd eigenfunction associated to the eigenvalue $\widehat{\lambda}_2=0$ then, $\widehat{\varphi}_2(x)> 0$ for $x > 0$ (after replacing 
$\widehat{\varphi}_2$ by $-\widehat{\varphi}_2$ if necessary) and $\widehat{\lambda}_2=0$  is a simple eigenvalue in the set of odd functions. 
\end{lemma}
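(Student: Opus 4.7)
The plan is to exploit the fact that odd functions on $\mathbb{R}$ are determined by their restriction to $(0,\infty)$, and to translate the eigenvalue problem for $\widehat{\mathfrak{L}}$ on odd functions into a half-line problem to which a Krein-Rutman / Perron-Frobenius type argument applies.

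First I would rewrite $\widehat{\mathfrak{L}}$ acting on an odd $v\in\widehat{\mathcal{H}}$ as a half-line operator $\widehat{\mathfrak{L}}_+$ on $(0,\infty)$. Splitting the defining integral of $\mathcal{T}_s v(\kappa)$ into $\int_0^{\infty}+\int_{-\infty}^{0}$ and changing variables $\tau\mapsto -\tau$ in the second piece, using $v(-\tau)=-v(\tau)$ together with the evenness of $\mathcal{K}$ (immediate from \eqref{kernel} after factoring $e^{2t}$ from the angular integrand), the off-diagonal part of $\widehat{\mathfrak{L}}_+$ takes the form
$$-\int_0^{\infty}K_{odd}(\kappa,\tau)v(\tau)\,d\tau,\qquad K_{odd}(\kappa,\tau):=C_{N,s}\bigl[\mathcal{K}(\kappa-\tau)-\mathcal{K}(\kappa+\tau)\bigr],$$
while the diagonal part acquires the extra positive weight $C_{N,s}\int_0^{\infty}[\mathcal{K}(\kappa-\tau)+\mathcal{K}(\kappa+\tau)]\,d\tau$. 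By construction, odd eigenfunctions of $\widehat{\mathfrak{L}}$ on $\mathbb{R}$ correspond bijectively, via restriction and odd extension, to eigenfunctions of $\widehat{\mathfrak{L}}_+$ on $(0,\infty)$ with the same eigenvalue.

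Second, I would establish the strict positivity $K_{odd}(\kappa,\tau)>0$ for $\kappa,\tau>0$. Since $|\kappa-\tau|<\kappa+\tau$ whenever $\kappa,\tau>0$, this reduces to showing that $\mathcal{K}(t)$ is strictly decreasing in $|t|$, which can be obtained either by differentiating the explicit expression \eqref{kernel}, or, more conceptually, by recalling that $\mathcal{K}$ is precisely the kernel produced by the Emden-Fowler change of variable from the radial fractional Laplacian, whose underlying integrand $|x-y|^{-(N+2s)}$ is strictly positive and strictly decreasing in $|x-y|$. I expect the verification of this monotonicity to be the main technical obstacle. With $K_{odd}>0$ in hand, the off-diagonal part of $\widehat{\mathfrak{L}}_+$ is strictly negative, so for $\mu$ large enough the resolvent $(\widehat{\mathfrak{L}}_++\mu I)^{-1}$ expands as a Neumann series whose terms are all positive integral operators; hence it is positivity-improving on $L^2((0,\infty))$. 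A standard Krein-Rutman argument then yields that the principal eigenvalue $\mu_1^{odd}$ of $\widehat{\mathfrak{L}}_+$ is simple and admits a strictly positive eigenfunction on $(0,\infty)$.

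Finally, I would identify $\mu_1^{odd}$ with $\widehat{\lambda}_2=0$. By Remark \ref{auxi}, $\widehat{\varphi}_1$ may be chosen even, so every odd function is $L^2(\mathbb{R})$-orthogonal to $\mathrm{span}\{\widehat{\varphi}_1\}$, which combined with the variational characterisation gives $\mu_1^{odd}\geq\widehat{\lambda}_2\geq 0$ by Lemma \ref{lanuestravale}. Conversely $\mu_1^{odd}\leq 0$, since $\widehat{U}'_{\alpha,s}$ is an odd element of $\mathrm{Ker}\,\widehat{\mathfrak{L}}$. Hence $\mu_1^{odd}=0$, and the Krein-Rutman conclusion applied to $\widehat{\varphi}_2$ yields both its strict positivity on $(0,\infty)$ (after a sign change if necessary) and the simplicity in the odd subspace asserted by the lemma.
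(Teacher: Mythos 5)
Your half-line reduction, the role of the evenness and strict monotonicity of $\mathcal{K}$, and the identification of the bottom of the odd spectrum with $0$ all match the strategy of the paper (and the monotonicity is not really an obstacle: multiplying through, $\mathcal{K}(t)=\int_{\mathbb{S}^{N-1}}\bigl(2\cosh t-2\langle\theta,\vartheta\rangle\bigr)^{-\frac{N+2s}{2}}d\vartheta$, which is manifestly even and strictly decreasing in $|t|$). The genuine gap is in the step where you pass from the positivity of $K_{odd}$ to a positivity-improving resolvent. The splitting of $\widehat{\mathfrak{L}}_+$ into an off-diagonal integral operator with kernel $K_{odd}(\kappa,\tau)=C_{N,s}[\mathcal{K}(\kappa-\tau)-\mathcal{K}(\kappa+\tau)]$ plus a multiplication operator with weight $C_{N,s}\int_0^{\infty}[\mathcal{K}(\kappa-\tau)+\mathcal{K}(\kappa+\tau)]\,d\tau$ is ill-defined: since $\mathcal{K}(t)\sim c|t|^{-1-2s}$ as $t\to 0$, that diagonal weight is $+\infty$ and the off-diagonal operator is not bounded on $L^2(0,\infty)$, exactly as one cannot split $(-\Delta)^s$ itself into "multiplication minus convolution". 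Consequently the claim that $(\widehat{\mathfrak{L}}_++\mu I)^{-1}$ is given for large $\mu$ by a Neumann series of positive integral operators does not follow as stated; the only well-defined form of the half-line operator keeps the difference $v(\kappa)-v(\tau)$ inside a principal-value integral against the singular kernel $K_{odd}$, plus a finite killing potential $2C_{N,s}\int_0^\infty\mathcal{K}(\kappa+\tau)\,d\tau$. Making your resolvent argument rigorous would require either the semigroup/positivity-improving machinery of \cite{Frank-Lenzmann, Frank-Lenzmann-Silvestre} (which this paper deliberately avoids), or a maximum-principle analysis of the regularized near-diagonal part — neither of which is sketched.

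The paper sidesteps this entirely by running the Perron--Frobenius argument at the level of the quadratic form: for odd $\widehat v$ the form $\widehat{\mathcal{R}}(\widehat v)$ is rewritten on $(0,\infty)^2$ with the two kernels $\mathcal{K}(\kappa-\tau)$ and $\mathcal{K}(\kappa+\tau)$, and the pointwise inequality \eqref{true} (whose only input is the same monotonicity of $\mathcal{K}$ you isolate) shows that replacing $\widehat v$ by $|\widehat v|$ on the half-line does not increase the quotient. Hence a minimizer in the odd sector can be taken nonnegative on $(0,\infty)$, strict positivity follows from the strong maximum principle of \cite[Proposition 3.6]{Manuel}, and simplicity is obtained by the subtraction argument of Proposition \ref{saleg-1} iii) after normalizing in $L^2(\mathbb{R}^+)$ — no resolvent or semigroup positivity is needed. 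If you replace your Neumann-series step by this form-level argument (or supply the missing positivity-preservation proof for a correctly defined decomposition), the rest of your outline, including the use of Lemma \ref{lanuestravale} and of the odd kernel element $\widehat{U}'_{\alpha,s}$ to pin the odd ground-state eigenvalue at $0$, goes through.
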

\begin{proof} 
Since by assumption we have that $\widehat{\lambda}_2=0$ is attainted by $\widehat{\varphi}_2$ an odd function, we get that
%
$$
\widehat{\lambda}_2=0=\inf_{\scriptsize {\begin{array}[c]{cc}\widehat{v}\in \widehat{\mathcal{H}}\setminus\{0\}\\ \widehat{v}\perp \widehat{\varphi}_1\\
\widehat{v} \,\mbox{odd}\end{array}}} \frac{\widehat{\mathcal{R}}(\widehat{v})}{\|\widehat{v}\|_{L^2(\mathbb{R})}^{2}}.
$$
%
{{Since $\widehat{V}=-p^*_{\alpha,s}\widehat{U}_{\alpha,s}^{p^*_{\alpha,s}-1}$ is an even function clearly
\begin{eqnarray*}
\widehat{\mathcal{R}}(\widehat{v})&=&2 C_{N,s}\int_0^\infty \int_0^\infty (\widehat{v}(\kappa)-\widehat{v}(\tau))^2\mathcal{K}(\kappa-\tau)+(\widehat{v}(\kappa)+\widehat{v}(\tau))^2\ \mathcal{K}(\kappa+\tau)d\tau\,d\kappa\\
&+&2\mathcal{A}_{s,N}\int_{0}^{\infty}\widehat{v}^2d\tau+2\int_0^\infty \widehat{V}(\tau)\widehat{v}^2d\tau.
\end{eqnarray*}}}
In order to affirm that, after replacing sign, $\widehat{\varphi}_2\geq 0$, we will prove that
\begin{eqnarray}\label{true}
A\!&\!\!:=\!\!&\!(|\widehat{v}(\kappa)|-|\widehat{v}(\tau)|)^2\mathcal{K}(\kappa-\tau)+(|\widehat{v}(\kappa)|+|\widehat{v}(\tau)|)^2\ \mathcal{K}(\kappa+\tau)\nonumber\\
\!&\!\leq\!\!&\! (\widehat{v}(\kappa)-\widehat{v}(\tau))^2\mathcal{K}(\kappa-\tau)+(\widehat{v}(\kappa)+\widehat{v}(\tau))^2\ \mathcal{K}(\kappa+\tau):=B,
\end{eqnarray}
for every $(\kappa,\tau) \in (0,\infty)^2$. 
Since the previous inequality is trivial if we consider $\widehat{v}(\tau)\widehat{v}(\kappa)\geq0$, in what follows we will work with $\kappa,\, \tau\geq 0$ such that $\widehat{v}(\tau)\widehat{v}(\kappa)< 0$. 
In this case it is clear that
$$
(\widehat{v}(\kappa)+\widehat{v}(\tau))^2=(|\widehat{v}(\kappa)|-|\widehat{v}(\tau)|)^2
$$
and
$$
(\widehat{v}(\kappa)-\widehat{v}(\tau))^2= (|\widehat{v}(\kappa)|+|\widehat{v}(\tau)|)^2,
$$
%
so that
$$
A\leq (\widehat{v}(\kappa)+\widehat{v}(\tau))^2\mathcal{K}(\kappa-\tau)+(\widehat{v}(\kappa)-\widehat{v}(\tau))^2\ \mathcal{K}(\kappa+\tau)\leq B,
$$
where in the last inequality we have used that $\mathcal{K}$ is even, the fact that $\mathcal{K}(\cdot)$ is strictly-decreasing and $(\kappa+\tau)> (\kappa-\tau)$ for $\kappa,\tau>0$. 
So that \eqref{true} follows and we can affirm that if $\widehat{\varphi}_2$ is a minimizer then $\widehat{\varphi}_2\geq 0$ in $(0,\infty)$. 
Applying now a {strong maximum principle} (see \cite[Proposition 3.6]{Manuel}) it follows that  $\widehat{\varphi}_2>0$.

The proof of the uniqueness of $\widehat{\varphi}_2$ follows as in Proposition \ref{saleg-1} {\it iii)} doing a normalization of the odd functions with respect to the $L^{2}(\mathbb{R}^{+})$ norm.
\end{proof}}}

\subsection{Proof of Theorem \ref{nodegenerancia_extendido}.}

Let us consider the orthogonal decomposition
$$
L^{2}(\mathbb{R})=L_{even}^{2}(\mathbb{R})\oplus L_{odd}^{2}(\mathbb{R}).
$$
Since, as we commented before, without loss of generality, we can consider that $\widehat{U}_{\alpha,s}$ is even then $\mathfrak{L}$ leaves the subspaces $L_{even}^{2}(\mathbb{R})$ and $L_{odd}^{2}(\mathbb{R})$ invariant so we will analyze these subspaces separately. 
First of all we notice that (see \cite{Barrios-Quaas}), since for $|x|>>1$,
$$
\widehat{U}'_{\alpha,s}(|x|)\leq C(s)\frac{|x|^{-1}}{|x|^{\frac{N-2s}{2}}},
$$
then $\widehat{U}'_{\alpha,s}(|x|)\in L^2(\mathbb{R})$.  Moreover since it is clear that $\widehat{\mathfrak{L}}(\widehat{U}'_{\alpha,s})=0$, then by Proposition \ref{final} we get that $\widehat{U}'_{\alpha,s}$ is the unique odd eigenvalue associated associated to $\widehat{\lambda}_2$. 
That is, 
$$
{{{\rm Ker}\,\widehat{\mathfrak{L}}|_{L_{odd}^{2}(\mathbb{R})}=\mathrm{span}\{\widehat{U}'_{\alpha,s}\}.}}
$$
To conclude, we have to prove that
$$
{\rm Ker}\,\widehat{\mathfrak{L}}|_{L_{even}^{2}(\mathbb{R})}=\{0\},
$$
that is, that {{$\widehat\lambda_2=0$}} can not be an eigenvalue with  even eigenfunction. 
Let us suppose by contradiction that there exists $\widehat{\phi}_{2}\in L_{even}^{2}(\mathbb{R})$ such that $\widehat{\mathfrak{L}}\widehat{\phi}_{2}=0$. 
First of all we notice that
\begin{equation}\label{afirmacion_clave}
\mbox{$\widehat{\phi}_{2}$ changes its sign exactly one in $\{x>0\}$.}
\end{equation}
Indeed, since by Lemma \ref{lema4} and Remark \ref{rema} {\it ii)} we get that
$$
\phi_2(r)=r^{-\frac{N-2s}{2}} \widehat{\phi}_{2}(\ln\, r),\, r>0,
$$
is a radial eigenfunction of $H=(-\Delta)^s{+}V $ associated to the zero eigenvalue, if $\widehat{\phi}_{2}$ would change sign twice in $\{x>0\}$ then $\phi_2$ would do it four times in the positive axis contradicting Proposition \ref{clave}.  
Thus, since \eqref{afirmacion_clave} is true, we get that there exists $r^*\in (0,+\infty)$ such that 
\begin{equation}\label{namast}
\mbox{$\phi_2 \geq0$ in $(0,r^*)$, $ \phi_2(r^*)=0 $ and  $\phi_2(r)\leq 0$ in $[r^*,+\infty)$,}
\end{equation}
(after a posible change $\phi_2$ by $-\phi_2$).
Since 
%
$$\mathcal{T}_{s}\widehat{U}_{\alpha,s}+ \mathcal{A}_{s,N} \widehat{U}_{\alpha,s} = \widehat{U}_{\alpha,s}^{p^*_{\alpha,s}}=e^{p^*_{\alpha,s}\ln \widehat{U}_{\alpha,s}},$$
differentiating with respect to $p^*_{\alpha,s}$ we get that {{$v:={\partial \widehat{U}_{\alpha,s}}/{\partial p^*_{\alpha,s}}$}} satisfies
\begin{equation}\label{xxt}
\widehat{\mathfrak{L}}(v)=  \widehat{U}_{\alpha,s}^{p^*_{\alpha,s}} \ln \widehat{U}_{\alpha,s}.
\end{equation}
{
Notice that existence for \eqref{prr} holds for $p$ close to  $p^*_{\alpha,s}$ as mentioned above.} Thus, denoting by $r=|x|$, if we define
$$
w(x)=w(r):=
\begin{cases}
(p^*_{\alpha,s}-1)\left(\frac{\ln \widehat{U}_{\alpha,s}(r)}{\ln \widehat{U}_{\alpha,s}(r^*)}-1\right)\widehat{U}_{\alpha,s}^{p^*_{\alpha,s}}(r), &\text{if }\widehat{U}_{\alpha,s}(r^*)\not =1,\\[8pt]
\widehat{U}^{p^*_{\alpha,s}}_{\alpha,s}(r)\ln \widehat{U}_{\alpha,s}(r), &\text{if }\widehat{U}_{\alpha,s}(r^*) =1,
\end{cases}
$$
by \eqref{flower} and \eqref{xxt} we get that $w$ in the rank of $\widehat{\mathfrak{L}}$. 
Moreover since ${p^*_{\alpha,s}}>1$ and $\widehat{U}_{\alpha,s}(r)$ is positive and strictly decreasing, we deduce that
$$
\mbox{$w \geq0$ in $(0,r^*)$, and  $w(r)\leq 0$ in $[r^*,+\infty)$.}
$$
By \eqref{namast} this implies that
%
$$
\int_\mathbb{R}  \phi_2(\tau) w(\tau) d\tau >0,
$$
getting a contradiction with the standard Fredholm alternative theory.
 
\section{Uniqueness}

As was commented in the Introduction, throughout this section we will assume $N\geq 3$. Notice that 
by Remark \ref{auxi}, the hypothesis $N(\widehat{\mathfrak{L}}_{Q_{s_0}})=1$ in Theorem \ref{unicidad1} may be changed by $N_{even}(\widehat{\mathfrak{L}}_{Q_{s_0}})=1$, that is, the Morse index of 
$$
\widehat{\mathfrak{L}}_{Q_{s_0}}=\mathcal{T}_{s_0}+ \mathcal{A}_{s_0,N}- p\, Q_{s_0}^{p-1},
$$
acting on $L^2_{even}(\mathbb{R})$ is constant and equal to one.

\subsection{The local branch}

Let $\textcolor{black}{Q(s)=Q_s}$ be a solution of \eqref{cc}.  It is clear that  $Q_s\in \widehat{\mathcal{H}}^s(\mathbb{R})$ \textcolor{black}{defined in \eqref{el_espacio}}, so that $Q_s\in L^{r}(\mathbb{R})$ for every $1<r\leq {{2_s^{**}-1}}$ ( or $\infty$) if $s\leq \frac{1}{2}$ (or $s \geq\frac{1}{2}$).  Since by the fast decay estimates obtained in \cite{Barrios-Quaas} (see Remark 4.7 ii) there) we can use moving planes method in a similar way as in \cite{Felmer-Ying} in the transformed equation\eqref{cc}, to prove that $Q$ is symmetric with respect to a point, we could assume, without loss of generality that $Q$ is, up to translation, an even function. Therefore we define the space
$$
X^p:=\{ f\in L^2(\mathbb{R})\cap L^{p+1}(\mathbb{R})\,| \, f\mbox{ is even }\},\quad p<2_s^{**}-1,
$$
with a norm 
$$
\|f\|_{X^p}:=\|f\|_{L^2(\mathbb{R})}+ \|f\|_{L^{p+1}(\mathbb{R})}.
$$  
%
We highlight that the space $X^p$ is $s$-independent. Let us establish now the existence and local uniqueness of a branch $Q_s$ around a solution $Q_{s_0}$.
%
\begin{lemma}\label{unicidad_1}
Let fix $0<s_0<1$, $0<p<2_{s_0}^{**}-1$ and let us assume that
\begin{equation}\label{hip}
\mbox{$Q_{s_0}\in X^{p}$ is a solution of \eqref{cc} with $s=s_0$ such that $N_{even}(\widehat{\mathfrak{L}}_{Q_{s_0}})=1$}.
\end{equation}
%
Then, for some $\delta>0$ there exists a map $Q\in C^1([s_0, s_0+\delta]; X^{p})$ such that
\begin{itemize}
\item [i)]  For every $s\in [s_0, s_0+\delta]$, $(Q_s, \mathcal{A}_{s,N})$ is a solution of \eqref{cc} 
\item [ii)]There exists $\varepsilon>0$ such that $(Q_s, \mathcal{A}_{s,N})$ is the unique solution of \eqref{cc} for $s\in [s_0, s_0+\delta]$ in the set $\{Q\in X^{p}: \|Q-Q_{s_0}\|_{X^p}<\varepsilon\}$.
\end{itemize}
\end{lemma}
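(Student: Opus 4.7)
The strategy is a standard Implicit Function Theorem argument, carried out on the subspace of even functions so as to exploit the hypothesis $N_{even}(\widehat{\mathfrak{L}}_{Q_{s_0}})=1$ together with the non-degeneracy established in Theorem \ref{nodegenerancia_extendido}.

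First, I would set up the functional framework. Consider the map
\[
F(s,Q) := \mathcal{T}_{s} Q + \mathcal{A}_{s,N} Q - |Q|^{p-1}Q,
\]
viewed as an operator from a neighborhood of $(s_0,Q_{s_0})$ in $[0,1)\times X^{p}$ into a suitable target space $Y$ chosen so that $F$ lands in $Y$ (for instance one can take $Y=L^{2}(\mathbb{R})\cap L^{(p+1)/p}(\mathbb{R})$ restricted to even functions, since the nonlinearity $|Q|^{p-1}Q$ naturally sits in this dual-type space by Hölder). Note that $F(s_{0},Q_{s_{0}})=0$ by hypothesis, and working in $X^{p}$ (which is $s$-independent) is what allows a genuine IFT in $s$. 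The main point is that the partial derivative $D_Q F(s_0,Q_{s_0})$ is precisely the linearized operator $\widehat{\mathfrak{L}}_{Q_{s_0}}$ acting on even functions.

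Second, I would verify that $F$ is $C^{1}$ jointly in $(s,Q)$. The $Q$-dependence is clearly smooth because the nonlinearity $Q\mapsto |Q|^{p-1}Q$ is $C^{1}$ on $L^{p+1}$ (with the appropriate interpretation of exponents when $p\le 1$, which is ruled out here). The delicate part is $C^{1}$ regularity in $s$: one needs to differentiate the kernel $\mathcal{K}$ defined in \eqref{kernel} and the constant $\mathcal{A}_{s,N}$ defined in \eqref{AA} with respect to $s$, producing bounds uniform in $s\in[s_{0},s_{0}+\delta]$. This reduces to integrability estimates for $\partial_{s}\mathcal{K}$ against differences $Q_{s_0}(\kappa)-Q_{s_0}(\tau)$; the fast decay of $Q_{s_0}$ inherited via \eqref{fastdecay} through the Emden--Fowler transform gives enough room to differentiate under the integral. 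The exponent $p = p^{*}_{\alpha,s}$ also depends on $s$, but via a smooth function, so $\partial_s (|Q|^{p-1}Q)$ brings in a log factor that is controlled because $Q_{s_0}$ decays exponentially fast at infinity in the transformed variables.

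Third, and this is the heart of the argument, I would prove that $\widehat{\mathfrak{L}}_{Q_{s_0}}: X^{p}\to Y$ is an isomorphism when restricted to the even subspace. By Theorem \ref{nodegenerancia_extendido} the kernel of $\widehat{\mathfrak{L}}_{Q_{s_0}}$ on all of $L^{2}(\mathbb{R})$ is spanned by $\widehat{U}'_{\alpha,s_0}$, which is an \emph{odd} function, so on $L^{2}_{even}(\mathbb{R})$ the kernel is trivial. Since $\widehat{\mathfrak{L}}_{Q_{s_0}}$ is self-adjoint with Morse index one in the even sector (spectrum consisting of a single negative eigenvalue and the rest nonnegative) and $0$ is not an eigenvalue, the spectrum is uniformly bounded away from zero on the even orthogonal complement of the first eigenfunction, and hence $\widehat{\mathfrak{L}}_{Q_{s_0}}$ is invertible on $L^{2}_{even}$; a standard elliptic-regularity bootstrap using the equation upgrades this to an isomorphism between the natural $X^{p}$ and $Y$ spaces.

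Finally, the Implicit Function Theorem yields a $C^{1}$ curve $s\mapsto Q_{s}\in X^{p}$ defined on $[s_{0},s_{0}+\delta]$ with $F(s,Q_{s})=0$ and local uniqueness of the solution in a small $X^{p}$-neighborhood of $Q_{s_{0}}$. The main obstacle I anticipate is the $C^{1}$-in-$s$ regularity of the nonlocal operator $\mathcal{T}_s$, i.e., controlling $\partial_s\mathcal{K}$ and $\partial_s\mathcal{A}_{s,N}$ uniformly; once this is in place, the spectral input from Theorem \ref{nodegenerancia_extendido} makes the invertibility step essentially immediate, and the IFT takes care of the rest.
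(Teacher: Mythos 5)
Your overall strategy (Implicit Function Theorem in $s$, with invertibility of the linearization coming from the fact that the kernel of $\widehat{\mathfrak{L}}_{Q_{s_0}}$ is spanned by an odd function, hence trivial on the even sector) is the same as the paper's, and that spectral input is used in the same way as in Remark \ref{gato}. The genuine gap is in your functional-analytic setup. You define $F(s,Q)=\mathcal{T}_sQ+\mathcal{A}_{s,N}Q-|Q|^{p-1}Q$ on the $s$-independent space $X^p=L^2\cap L^{p+1}$ (even), but for a generic $Q\in X^p$ the term $\mathcal{T}_sQ$ is simply not defined as an element of any fixed target space: it requires $2s$ derivatives' worth of regularity, and any domain encoding that regularity is $s$-dependent (for $s>s_0$ even $H^{2s_0}$ does not suffice), which destroys exactly the feature you yourself single out as essential, namely performing the IFT in $s$ on a fixed Banach space. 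Your proposed target $Y=L^2\cap L^{(p+1)/p}$ does not repair this (and in fact the three terms of $F$ do not even land in a common such space: $\mathcal{A}_{s,N}Q\in L^2\cap L^{p+1}$ while $|Q|^{p-1}Q\in L^{(p+1)/p}\cap L^{2/p}$). The surjectivity of $\widehat{\mathfrak{L}}_{Q_{s_0}}:X^p\to Y$, which you dispatch with an unspecified ``elliptic-regularity bootstrap,'' would also need real work in this formulation.

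The paper sidesteps all of this by writing the equation in resolvent form,
\begin{equation*}
F(Q,s)=Q-\frac{1}{\mathcal{T}_s+\mathcal{A}_{s,N}}\,Q^{p},
\end{equation*}
so that $F$ maps (a neighborhood in) $X^p\times[s_0,s_0+\delta]$ back into $X^p$: the resolvent smooths, no unbounded operator ever acts on $Q$, and the $C^1$ dependence on $s$ is obtained from the identity $(\mathcal{T}_s+\mathcal{A}_{s,N})v(t)=e^{t\frac{N+2s}{2}}(-\Delta)^s\bigl(e^{-t\frac{N-2s}{2}}v(t)\bigr)$ together with the resolvent-smoothness results of Frank--Lenzmann and Frank--Lenzmann--Silvestre, rather than by differentiating the kernel $\mathcal{K}$ by hand as you propose (which you only sketch). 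In that formulation $\partial_QF=1-p(\mathcal{T}_{s_0}+\mathcal{A}_{s_0,N})^{-1}Q_{s_0}^{p-1}$, and trivial kernel on the even sector immediately gives a bounded inverse on $X^p$, after which the IFT yields i) and ii). A minor additional slip: in this lemma $p$ is a \emph{fixed} subcritical exponent, not $p^*_{\alpha,s}$; the continuation is in $s$ only, so there is no $s$-derivative of the exponent to control. To make your route rigorous you would either have to work with $s$-dependent graph-norm domains (and then justify the IFT across varying $s$), or switch to the inverted formulation as the paper does.
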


\begin{remark}\label{gato}
By Theorem \ref{nodegenerancia_extendido} (see Remark \ref{importante}) we notice that the hypothesis \eqref{hip} guarantee that the linearized operator $\widehat{\mathfrak{L}}_{Q_{s_0}}$ has a trivial kernel on $L^2_{even}(\mathbb{R})$ so that has a bounded inverse on $X^p$.
\end{remark}
\begin{proof}
The proof follows by using the implicit function theorem by defining the map
$$
F(Q, s):=Q-\frac{1}{\mathcal{T}_{s}+ \mathcal{A}_{s,N}}Q^{p}.
$$
Indeed, by using the fact that 
$$
(\mathcal{T}_{s}+ \mathcal{A}_{s,N})v(t)=e^{t\left(\frac{N+2s}{2}\right)}(-\Delta)^s\left(e^{-t\left(\frac{N-2s}{2}\right)}v(t)\right),
$$
and the results of \cite{Frank-Lenzmann} and \cite{Frank-Lenzmann-Silvestre} we get that $F(Q,s)$ is well defined and $C^1$. Moreover, there exists the inverse of $\partial_{Q}F$ at $(Q_{s_0},s_0)$, that is, for every $f\in X^p$ there exists an unique $\eta\in X^p$ such that $\partial_{Q_{s_0}}F\eta=f$ or, what is the same, the unique solution of
$$
\left(1-\frac{p}{\mathcal{T}_{s_0}+ \mathcal{A}_{s_0,N}}Q_{s_0}^{p-1}\right)\eta=0,
$$
is $\eta\equiv 0$. Indeed if $\eta\in X^p$ satisfies the previous equality we get that $\eta\in \mathrm{Ker}\,\widehat{\mathfrak{L}}_{Q_{s_0}}$ so we conclude by \eqref{hip} and the fact that $\eta$ is an even function (see Remark \ref{gato}). 
Once we have proved that  $\partial_{Q}F$ is invertible at $(Q_{s_0},s_0)$ the Implicit Function Theorem applied to the map $F(Q,s)$ at $(Q_{s_0}, s_0)$ imply the conclusions {{{ i)-ii)} by taking $\delta>0$ and $\varepsilon>0$ small enough.}}
\end{proof}

\subsection{Global continuation}

Let us noiw consider $Q_s\in X^p$ a local branch defined for $s\in [s_0, s_0+\delta]$ established in Lemma \ref{unicidad_1} for some fixed $0<s_0<1$ and $0<p<2_{s_0}^{**}-1$ (or $<\infty$ if $s_0\geq 1/2$). 
We consider now the maximal extension of the branch $Q_s$ for $s\in [s_0, s^*)$ {{where
$$
s^*:=\sup\{ s_0<\tilde s<1 \,|\, Q_s\in C^1([s_0, \tilde s); X^p) \mbox{ satisfies \eqref{hip} for every $s\in [s_0, \tilde s)$}<1\}.
$$
The}} objective is to prove that $s^*=1$ under some suitable hypothesis on $Q_{s_0}$. That is, get the following
\begin{proposition}\label{unicidad_2}
Let fix $0<s_0<1$ and $0<p<2_{s_0}^{**}-1$ (or $<\infty$ if $s_0\geq 1/2$) and $Q_{s_0}\in X^{p}$ satisfying \eqref{hip}. 
If we assume that $Q_{s_0}>0$ then the maximal branch $Q_s\in C^1([s_0, s^*); {X^p})$ its extend to $s^*=1$. 
Moreover,
$$
Q_s\to Q_{1}\mbox { in $L^{2}(\mathbb{R})\cap L^{p+1}(\mathbb{R})$ when $s\to 1^{-}$,}
$$
where $Q_1>0$ is the unique solution of
$$
\mathcal{T}_1Q_{1}+ \mathcal{A}_{1,N} Q_{1} = Q_1^{p^*_{\alpha,1}-1}\quad \mbox{ in }\mathbb{R},
$$
where
$$
\mathcal{T}_1 w:=\lim_{s\to 1^{-}} \mathcal{T}_{s}w= - w''
$$
and
$$
\mathcal{A}_{1,N}=4\frac{\Gamma^2\left(\frac{N+2}{4}\right)}{\Gamma^2\left(\frac{N-2}{4}\right)}{=\left(\frac{N-2}{2}\right)^{2}}.
$$
\end{proposition}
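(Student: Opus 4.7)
The plan is to argue by contradiction: assume $s^{*}<1$ and show that the branch can in fact be continued past $s^{*}$. Three ingredients must be verified uniformly for $s\in[s_{0},s^{*})$: (a) an a priori bound in $X^{p}\cap L^{\infty}(\mathbb{R})$; (b) preservation of positivity $Q_{s}>0$; and (c) preservation of $N_{even}(\widehat{\mathfrak{L}}_{Q_{s}})=1$. Granted these, along any sequence $s_{n}\nearrow s^{*}$ we may extract a limit $Q_{s^{*}}>0$ solving \eqref{cc} at $s=s^{*}$ and satisfying \eqref{hip}, so Lemma \ref{unicidad_1} applied at $s^{*}$ would extend the branch past $s^{*}$, contradicting the maximality of $s^{*}$. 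Once $s^{*}=1$ is secured, the same compactness gives $Q_{s}\to Q_{1}$ in $L^{2}(\mathbb{R})\cap L^{p+1}(\mathbb{R})$; the convergences $\mathcal{T}_{s}\to -\partial_{t}^{2}$ and $\mathcal{A}_{s,N}\to ((N-2)/2)^{2}$ noted in the introduction identify $Q_{1}$ as the unique positive even solution of the limiting ODE, uniqueness being the classical Poincar\'e--Bendixson result (cf.\ \cite{Cod}).

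For (a) I would use a Gidas--Spruck blow-up applied directly to \eqref{cc}. After translation each $Q_{s}$ is even, and by moving planes may be taken decreasing on $(0,\infty)$, so if $M_{n}:=\|Q_{s_{n}}\|_{L^{\infty}}=Q_{s_{n}}(0)\to\infty$, the rescaling
$$
\widetilde Q_{n}(\tau):=M_{n}^{-1}Q_{s_{n}}\bigl(M_{n}^{-(p-1)/(2s_{n})}\tau\bigr)
$$
satisfies $0\leq\widetilde Q_{n}\leq 1$ with $\widetilde Q_{n}(0)=1$. The rescaled operator $M_{n}^{-(p-1)}(\mathcal{T}_{s_{n}}+\mathcal{A}_{s_{n},N})$ converges locally to $(-\Delta)^{s^{*}}$ on $\mathbb{R}$, because the exponential factor $e^{-|t|(N+2s)/2}$ in the kernel \eqref{kernel} combined with the rescaling produces, up to a constant, the homogeneous fractional kernel $|t|^{-1-2s^{*}}$, while the lower-order term $\mathcal{A}_{s_{n},N}\widetilde Q_{n}$ vanishes in the limit. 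The limit $\widetilde Q_{\infty}$ is therefore a nontrivial bounded nonnegative solution of $(-\Delta)^{s^{*}}\widetilde Q_{\infty}=\widetilde Q_{\infty}^{p}$ on $\mathbb{R}$, impossible when $p<(1+2s^{*})/(1-2s^{*})$ by the Liouville-type nonexistence of \cite{Barrios-Quaas}. The $X^{p}$ bound then follows by testing \eqref{cc} against $Q_{s}$ and combining the $L^{\infty}$ bound with the fast decay from \eqref{fastdecay} transferred via \eqref{U_transformada}.

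For (b), if $\bar s\in(s_{0},s^{*})$ were the first parameter where the nonnegative $Q_{\bar s}$ vanishes at some $t_{0}$, then evaluating \eqref{cc} at $t_{0}$ gives
$$
-C_{N,\bar s}\int_{\mathbb{R}}Q_{\bar s}(\tau)\,\mathcal{K}(t_{0}-\tau)\,d\tau=Q_{\bar s}^{\,p}(t_{0})=0,
$$
impossible since $\mathcal{K}>0$ and $Q_{\bar s}\not\equiv 0$. For (c), the potential $pQ_{s}^{p-1}$ depends continuously on $s$ through $Q\in C^{1}([s_{0},s^{*});X^{p})$ and the uniform $L^{\infty}$ bound from (a), yielding continuous dependence of the spectrum of $\widehat{\mathfrak{L}}_{Q_{s}}$ on $L^{2}_{even}(\mathbb{R})$. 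Theorem \ref{nodegenerancia_extendido} together with Remark \ref{importante}~ii) ensures the kernel on $L^{2}_{even}$ is trivial at every point of the branch where \eqref{hip} holds, so the second even eigenvalue cannot cross zero and $N_{even}(\widehat{\mathfrak{L}}_{Q_{s}})=1$ is preserved.

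The main obstacle is step (a), since $\mathcal{T}_{s}$ is only asymptotically scale-invariant: one must verify with care that the rescaled kernels $M_{n}^{-(p-1)}\mathcal{K}(M_{n}^{-(p-1)/(2s_{n})}\cdot)$ converge to the homogeneous fractional Laplacian kernel on $\mathbb{R}$, and that the limiting equation holds in the distributional sense with $\widetilde Q_{\infty}\not\equiv 0$; the evenness and monotonicity of $Q_{s}$ are crucial to prevent escape of the concentration point to infinity.
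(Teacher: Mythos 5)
Your overall continuation scheme --- Gidas--Spruck blow-up a priori bounds, positivity along the branch, preservation of $N_{even}(\widehat{\mathfrak{L}}_{Q_s})=1$ via non-degeneracy, then the local uniqueness Lemma \ref{unicidad_1} applied at $s^*$ to contradict maximality, and identification of the limit at $s=1$ --- is the same as the paper's, which assembles Lemma \ref{positividad}, the a priori bound lemma, Lemma \ref{convergencia}, lower semicontinuity of the Morse index and Theorem \ref{nodegenerancia_extendido}, following \cite[Lemma 8.5]{Frank-Lenzmann-Silvestre}. Your steps (a) and (c) essentially reproduce the paper's arguments (for the blow-up limit the paper invokes the one-dimensional Liouville theorems of \cite{Felmer1} ($s\geq 1/2$) and \cite{ZZ} ($s<1/2$) rather than \cite{Barrios-Quaas}, a minor point).

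The genuine gap is in your step (b). The ``first parameter $\bar s$ at which the nonnegative $Q_{\bar s}$ vanishes at some $t_0$'' need not exist: the branch is only $C^1$ with values in $X^p=L^2(\mathbb{R})\cap L^{p+1}(\mathbb{R})$, so positivity is not a pointwise-continuous property of $s$; and since $\inf_{\mathbb{R}}Q_s=0$ (the solutions decay), even locally uniform continuity in $s$ would not make $\{s:Q_s>0\}$ open --- negativity can appear in the tails along a sequence $s_n\to\bar s$ while $Q_{\bar s}$ itself remains strictly positive, a scenario in which no parameter exhibits a nonnegative solution with a zero and your evaluation of the equation at $t_0$ never gets off the ground. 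You also do not exclude the other way positivity can be lost in the limit, namely $Q_{\bar s}\equiv 0$. The paper's Lemma \ref{positividad} closes both holes quantitatively through an open/closed argument in $s$: testing the equation at $s_n$ with $Q_{s_n}^-$ and using the Sobolev embedding yields a uniform positive lower bound on $\int_{\{Q_{s_n}<0\}}|Q_{s_n}|^{p\frac{2}{1+2s_0}}$ whenever $Q_{s_n}$ changes sign, which is incompatible with $Q_{s_n}^-\to 0$ a.e.; and testing with $Q_{s_n}$ itself gives a uniform lower bound on the norm that rules out the trivial limit, after which the strong maximum principle of \cite[Proposition 3.6]{Manuel} upgrades $Q_{\bar s}\geq 0$ to $Q_{\bar s}>0$. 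Your pointwise maximum-principle computation is the easy half; the reduction to that situation is what must be supplied, and the same uniform lower bound is also what guarantees that the limit $Q_1$ as $s\to 1^-$ is nontrivial.
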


To establish the previous proposition we need some auxiliary results. 
The first one affirms that the positivity property holds along the maximal branch.

\begin{lemma}\label{positividad}
If $Q_{s_0}(|x|)>0$, then $Q_s(|x|)>0$ for all $s\in [s_0, s^*)$.
\end{lemma}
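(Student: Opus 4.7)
The plan is to argue by connectedness on the interval $[s_0, s^*)$. Set
$$A := \{s\in[s_0,s^*)\,:\,Q_s > 0 \text{ on } \mathbb{R}\};$$
by hypothesis $s_0\in A$, and I aim to show that $A$ is both open and closed in $[s_0,s^*)$, hence equals the whole interval. Two direct consequences of \eqref{cc} drive the argument, in keeping with the authors' remark in the introduction that positivity along the branch is obtained ``directly from the equation''.

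For closedness, fix a sequence $s_n\in A$ with $s_n\to s^\dagger\in[s_0,s^*)$. By Lemma \ref{unicidad_1} the branch is $C^1([s_0,s^*);X^p)$, so $Q_{s_n}\to Q_{s^\dagger}$ in $L^2(\mathbb{R})\cap L^{p+1}(\mathbb{R})$ and, along a subsequence, pointwise a.e.; a standard bootstrap for \eqref{cc} (as in \cite{Barrios-Quaas}) gives continuity of $Q_{s^\dagger}$, whence $Q_{s^\dagger}\geq 0$ everywhere on $\mathbb{R}$. The hypothesis that $s^\dagger$ lies in the maximal branch forces $Q_{s^\dagger}\not\equiv 0$ (otherwise $\widehat{\mathfrak{L}}_{Q_{s^\dagger}}=\mathcal{T}_{s^\dagger}+\mathcal{A}_{s^\dagger,N}$ would be strictly positive, contradicting $N_{even}(\widehat{\mathfrak{L}}_{Q_{s^\dagger}})=1$). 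If $Q_{s^\dagger}(x_0)=0$ at some $x_0$, evaluating \eqref{cc} at $x_0$ and using
$$\mathcal{T}_{s^\dagger}Q_{s^\dagger}(x_0)\,=\,-C_{N,s^\dagger}\int_{\mathbb{R}}Q_{s^\dagger}(\tau)\,\mathcal{K}(x_0-\tau)\,d\tau$$
gives $\int_{\mathbb{R}}Q_{s^\dagger}(\tau)\mathcal{K}(x_0-\tau)\,d\tau=0$; since $\mathcal{K}>0$, this is the strong maximum principle and forces $Q_{s^\dagger}\equiv 0$, a contradiction. Hence $Q_{s^\dagger}>0$, i.e.\ $s^\dagger\in A$.

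For openness, fix $s^\dagger\in A$; the subtle issue is to pass from continuity of the branch in $X^p$ to preservation of strict pointwise positivity. The key device I would use is a quantitative dichotomy obtained by testing \eqref{cc} at a global minimum $x_s$ of $Q_s$ (which is attained because $Q_s$ is even, continuous, and decays to $0$ at infinity by the bootstrap and the decay estimates of \cite{Barrios-Quaas}). Setting $m_s:=Q_s(x_s)$ and observing that
$$\mathcal{T}_sQ_s(x_s)\,=\,-C_{N,s}\int_{\mathbb{R}}(Q_s(\tau)-m_s)\,\mathcal{K}(x_s-\tau)\,d\tau\;\leq\;0,$$
in the case $m_s<0$ equation \eqref{cc} yields
$$C_{N,s}\int_{\mathbb{R}}(Q_s(\tau)-m_s)\,\mathcal{K}(x_s-\tau)\,d\tau\,=\,|m_s|^p-\mathcal{A}_{s,N}|m_s|\,\geq\,0,\qquad\text{i.e.}\qquad |m_s|\,\geq\,\mathcal{A}_{s,N}^{1/(p-1)}.$$
Thus $Q_s$ cannot dip slightly below zero: either $Q_s\geq 0$, or its minimum is bounded away from zero by a fixed constant. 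Combining this dichotomy with $C^1$-continuity of the branch, upgraded to local uniform convergence $Q_s\to Q_{s^\dagger}$ via uniform $L^\infty$ and Hölder bounds derived from the equation together with the uniform tail estimates of \cite{Barrios-Quaas}, I exclude the second alternative for $s$ near $s^\dagger$, so $Q_s\geq 0$; the closedness step above then upgrades this to $Q_s>0$.

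The main obstacle is the upgrade from $X^p$-continuity of the branch to $C^0_{\mathrm{loc}}$-continuity with uniform tail control, which is what actually guarantees $m_s\to 0$. I expect to handle it by combining the representation $Q_s=(\mathcal{T}_s+\mathcal{A}_{s,N})^{-1}(|Q_s|^{p-1}Q_s)$ with a uniform $L^\infty$ bound obtained via a bootstrap in the integrability exponent, plus the uniform pointwise decay estimates of \cite{Barrios-Quaas}: together these yield equicontinuity of $\{Q_s\}_{s\,\mathrm{near}\,s^\dagger}$ and uniform control at infinity, and so local uniform convergence follows from Arzelà--Ascoli.
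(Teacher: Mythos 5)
Your proposal follows the same continuation skeleton as the paper --- propagate positivity along $[s_0,s^*)$ by an open/closed argument and upgrade $Q\ge 0$ to $Q>0$ by the strong maximum principle (the paper cites \cite[Proposition 3.6]{Manuel}; you reprove it by evaluating \eqref{cc} at a zero) --- but the mechanism forbidding a negative part from appearing is genuinely different. The paper never works pointwise: it tests \eqref{cc} with $Q_{s_n}^-=\min\{Q_{s_n},0\}$, uses the elementary kernel inequality for $(Q_{s_n}(t)-Q_{s_n}(\tilde t))(Q_{s_n}^-(t)-Q_{s_n}^-(\tilde t))$ together with the Sobolev (or, for $s_0\ge 1/2$, Morrey/$L^\infty$) embedding of $\widehat{\mathcal{H}}^{s_0}$, and concludes that whenever $Q_s^-\not\equiv 0$ a fixed integral of $|Q_s|$ over $\{Q_s<0\}$ is bounded below by a positive constant; since $\|Q_{s_n}\|_{\widehat{\mathcal{H}}^{s_0}}\le C$ and $Q_{s_n}\to Q_{\tilde s}$ a.e., this is incompatible with $Q_{s_n}^-\to 0$ a.e., so no uniform $L^\infty$, H\"older or tail estimates are required. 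Your substitute --- the dichotomy $\min_{\mathbb{R}}Q_s<0\Rightarrow|\min_{\mathbb{R}}Q_s|\ge\mathcal{A}_{s,N}^{1/(p-1)}$ obtained by evaluating the equation at a global minimum --- is correct (for $p>1$) and quite clean, and your Morse-index exclusion of the trivial limit is a legitimate alternative to the paper's lower bound on $\int|Q_{s_n}|^{p\frac{2}{1+2s_0}}$; but its price is exactly the step you flag: converting $X^p$-continuity of the branch into pointwise control.

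Two remarks on that remaining step. Appealing to the decay estimates of \cite{Barrios-Quaas} (or to the uniform bound proved later in this section) is circular at this stage: those are established for positive solutions, whereas positivity of $Q_s$ near $s^\dagger$ is precisely what is at stake. On the other hand, the uniform tail control you worry about is not actually needed: if $Q_s(x_s)\le -c_0$ with $c_0$ uniform for $s$ near $s^\dagger$, a locally uniform modulus of continuity for the family $\{Q_s\}$ (obtainable from $Q_s=(\mathcal{T}_s+\mathcal{A}_{s,N})^{-1}(|Q_s|^{p-1}Q_s)$ with constants uniform for $s$ in a compact subinterval, i.e., the same mapping properties already invoked in Lemma \ref{unicidad_1}) gives $Q_s\le -c_0/2$ on an interval of fixed length around $x_s$, hence $\|Q_s^-\|_{L^2(\mathbb{R})}$ is bounded below; since $Q_{s^\dagger}>0$ yields $Q_s^-\le |Q_s-Q_{s^\dagger}|$ pointwise, this contradicts the $L^2$-continuity of the branch wherever $x_s$ lies. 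With this repair (or by switching to the paper's integral-norm argument) your proof closes; as written, the equicontinuity/uniform-bound input is only sketched and is the one piece that still needs justification.
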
 
\begin{proof} 
{{Case $0<s_0<{1}/{2}$}}.
{\underline {1) Open property}:} Let us prove that the sign changing is a close property. 
Let us suppose that $Q_s$ changing sign for every $s_0\leq s<\widetilde{s}<s^*$ and let us prove that $Q_{\widetilde{s}}$ also does it. 
For that we consider $\{s_n\}\subseteq [s_0,s^*)$ such that $s_n\to\widetilde{s}$ with $Q_{s_n}$ sign changing. 
By testing with $Q_{s_n}$ we get that  {{$\|Q_{s_n}\|_{\widehat{\mathcal{H}}^{s_n}}\leq C$, so in particular also $\|Q_{s_n}\|_{{\widehat{\mathcal{H}}^{s_0}}}\leq C$.}} Thus $Q_{s_n} \to Q_{\widetilde s}$ a.e. in $\mathbb{R}$. 
If we suppose that $Q_{\widetilde s}>0$ then by testing the equation of $Q_{s_n}$ by  $Q_{s_n}^-:=\min\{Q_{s_n}, 0\}$, since
$$
(Q_{s_n}(t)-Q_{s_n}(\widetilde{t}))(Q_{s_n}^{-}(t)-Q_{s_n}^{-}(\widetilde{t}))\geq (Q_{s_n}^{-}(t)-Q_{s_n}^{-}(\widetilde{t}))^2,
$$
we obtain 
{$$
C\|Q_{s_n}^-\|_{L^{\frac{2}{1-2s_{0}}}(\mathbb{R)} }
\leq \left(\int_{\{\mathbb{R}\cap \{Q_{s_n}<0\} }|Q_{s_n}|^{p\frac{2}{1+2s_{0}}}\right)^{\frac{1+2s_{0}}{2}} \|Q_{s_n}^-\|_{L^{\frac{2}{1-2s_{0}}}(\mathbb{R)}}.
$$
Since $p$ is subcritical the previous inequality implies a contradiction with the fact that  $Q_{s_n}^-\to 0$ a.e. in $\mathbb{R}$.}
\\
\underline {2) Close property}: Let us suppose that $Q_s>0$ for every $s_0\leq s<\widetilde{s}<s^*$ and we will obtain that $Q_{\widetilde{s}}>0$. 
Indeed, since doing as before we get that $Q_{s_n} \to Q_{\widetilde s}$ a.e in $\mathbb{R}$ with $s_n\to\widetilde{s}$, then $Q_{\widetilde s}\geq 0$. 
Moreover, since $Q_{\widetilde s}$ is a weak solution of \eqref{cc} with $s=\widetilde{s}$ by \cite[Proposition 3.6]{Manuel} we know that $Q_{\widetilde s}>0$ or $Q_{\widetilde s}\equiv 0$. 
However, the second option it is not possible due to the fact that, doing similarly as before, {testing the equation of $Q_{s_n}$ with $Q_{s_n}$ we get that $\int_{\mathbb{R}}|Q_{s_n}|^{p\frac{2}{1+2s_0}} $ is bounded from below. 

{{Case ${1}/{2}\leq s_0<1$. This case is more simple and use the $L^\infty$ norm  given by a Morrey type estimate.}}}
\end{proof}

The last fundamental step to obtain Proposition \ref{unicidad_2} will be to get the strong convergence of the branch in $L^2(\mathbb{R})\cap L^{p+1}(\mathbb{R})$ by using some a priori bounds. That is.
\begin{lemma}\label{convergencia}
Let $\{s_n\}\subseteq [s_0,s^*)$ such that $s_n\to s^*$ and $Q_{s_n}$ be a solution of \eqref{cc} with $s=s_n$. 
Then, up to subsequence, 
$$
Q_{s_n}\to Q_{s^*}\mbox{ in $L^2(\mathbb{R})\cap L^{p+1}(\mathbb{R})$,}
$$
with $Q_{s^*}$ satisfying \eqref{cc} with $s=s^*$.
\end{lemma}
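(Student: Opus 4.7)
\medskip
\noindent \textbf{Plan of proof.} The argument has three stages: (i) a uniform $L^{\infty}$ a priori bound on $\{Q_{s_n}\}$ via a Gidas--Spruck type blow-up, (ii) extraction of a weak limit that solves the limit equation, and (iii) promotion to strong convergence in $L^{2}(\mathbb{R})\cap L^{p+1}(\mathbb{R})$ by uniform exponential decay. All along we exploit that $Q_{s_n}$ is positive and even (Lemma \ref{positividad} plus the hypothesis carried by the branch).

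\medskip
\noindent \emph{Step 1 (uniform bounds).} First I would show that $\|Q_{s_n}\|_{L^{\infty}(\mathbb{R})} \leq C$ uniformly in $n$. Since $Q_{s_n}$ is even and decaying, its maximum $M_n = Q_{s_n}(0)$ is attained at the origin. Suppose for contradiction $M_n \to \infty$. Pulling the transformed equation back through \eqref{U_transformada}, the $Q_{s_n}$ correspond to radial positive solutions $U_n$ of $(-\Delta)^{s_n} U_n = |x|^{\alpha} U_n^{p_n}$ in $\mathbb{R}^N$ for an exponent $p_n$ below the critical one associated to $s_n$. A standard rescaling centered at the origin, together with the uniform $\widehat{\mathcal{H}}^{s_0}$ bound obtained by testing \eqref{cc} against $Q_{s_n}$ (as in the proof of Lemma \ref{positividad}), extracts a nontrivial nonnegative radial limit profile that is a classical solution of a limiting subcritical fractional H\'enon type equation without the mass term $\mathcal{A}_{s,N}$. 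The Liouville-type nonexistence results from \cite{Barrios-Quaas} (cited in the introduction as the reason for the subcriticality restriction) yield a contradiction.

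\medskip
\noindent \emph{Step 2 (weak limit solves the limit equation).} With $\|Q_{s_n}\|_{L^{\infty}} \leq C$ in hand, interior regularity for the nonlocal operator $\mathcal{T}_{s_n}$ produces uniform local H\"older bounds. Combined with the uniform $\widehat{\mathcal{H}}^{s_0}$ bound and the embedding $\widehat{\mathcal{H}}^{s_0}(\mathbb{R})\hookrightarrow L^{r}_{\mathrm{loc}}(\mathbb{R})$ for $r<2^{**}_{s_0}$, passing to a subsequence gives
$$
Q_{s_n}\rightharpoonup Q_{s^{*}}\ \text{in } \widehat{\mathcal{H}}^{s_0}(\mathbb{R}),\qquad Q_{s_n}\to Q_{s^{*}}\ \text{in } L^{r}_{\mathrm{loc}}(\mathbb{R}),\ r<2^{**}_{s_0},
$$
and pointwise a.e. Because the kernel $\mathcal{K}$ in \eqref{kernel} and the constant $\mathcal{A}_{s,N}$ in \eqref{AA} depend continuously on $s$, one passes term by term to the limit in the weak formulation to conclude that $Q_{s^{*}}$ is a weak, hence classical, even solution of \eqref{cc} with $s=s^{*}$.

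\medskip
\noindent \emph{Step 3 (strong convergence: the delicate point).} The main obstacle is ruling out loss of mass at infinity, since $\mathbb{R}$ is unbounded and the embedding into $L^2(\mathbb{R})$ is not compact. For this I would establish a uniform exponential decay estimate of the form $0<Q_{s_n}(\kappa)\leq C\, e^{-\delta|\kappa|}$ with $C,\delta>0$ independent of $n\in\mathbb{N}$. One way is to use \eqref{U_transformada} to transport the fast-decay bound \eqref{fastdecay} from the radial solutions $U_n$ to the transformed variable, tracking explicitly that the constants in \eqref{fastdecay} can be chosen uniformly in $s$ along the branch (this uses the uniform $L^{\infty}$ bound from Step 1 and a barrier comparison against the fundamental solution of $\mathcal{T}_{s_n}+\mathcal{A}_{s_n,N}$, whose mass gap $\mathcal{A}_{s_n,N}$ stays bounded below by a positive constant for $s_n$ in a compact subinterval of $(0,1)$). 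Once uniform exponential decay is secured, dominated convergence upgrades the a.e.\ and $L^{r}_{\mathrm{loc}}$ convergence of Step 2 to strong convergence in $L^{2}(\mathbb{R})\cap L^{p+1}(\mathbb{R})$, finishing the lemma. The positivity $Q_{s^{*}}>0$, needed later to continue the branch past $s^{*}$, follows from the strong maximum principle of \cite[Proposition 3.6]{Manuel} applied to the limit equation, exactly as in the closedness part of Lemma \ref{positividad}.
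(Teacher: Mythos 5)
There is a genuine gap, and it sits exactly at your Step 1. Along the branch the exponent $p$ in \eqref{cc} is \emph{fixed} while $s$ varies, so when you undo the Emden--Fowler change of variables \eqref{U_transformada} the pulled-back problem in $\mathbb{R}^N$ is always the \emph{critical} fractional H\'enon equation: the weight $\alpha_s$ adjusts so that $p=\frac{N+2s+2\alpha_s}{N-2s}$ exactly. It is never ``an exponent below the critical one associated to $s_n$'', so the Liouville theorem of \cite[Theorem 1.1]{Barrios-Quaas} (which concerns $1<p<p^*_{\alpha,s}$) does not apply; indeed positive solutions of the pulled-back equation exist (they are the bubbles themselves), so no contradiction can be extracted from nonexistence in $\mathbb{R}^N$. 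The alternative reading of your blow-up is no better: the maximum of $Q_{s_n}$ at $\kappa=0$ corresponds to $|x|=1$ in the original variables, and rescaling there drops the weight and produces $(-\Delta)^{s}z_\infty=z_\infty^{p}$ in $\mathbb{R}^N$ with $p<\frac{1+2s}{1-2s}$, which for $\alpha>0$ is \emph{supercritical} in dimension $N$, again outside the reach of any Liouville theorem. This is precisely why the paper (as announced in the introduction, which stresses that the original problem has no a priori bounds) runs the Gidas--Spruck blow-up directly on the one-dimensional transformed equation: setting $z_n(x)=M_n^{-1}Q_{s_n}(M_n^{-\frac{p-1}{2s_n}}x)$ and using that the kernel $\mathcal{K}$ in \eqref{kernel} behaves like $|t|^{-(1+2s_n)}$ near the origin (as in \cite[Lemma 4.3]{Manuel}), the limit profile solves $(-\Delta)^{s}z_\infty=z_\infty^{p}$ \emph{in $\mathbb{R}$}, where $p<2^{**}_{s}-1$ is subcritical with respect to dimension one, and the one-dimensional Liouville theorems of \cite{Felmer1} (for $s\geq\frac12$) and \cite{ZZ,CLO} (for $s<\frac12$) give the contradiction $z_\infty\equiv 0$ versus $z_\infty(0)=1$.

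Your Steps 2 and 3 are consistent in spirit with what the paper does: once the uniform $L^\infty$ bound and the uniform decay (stated in the paper as $Q_s\leq C|x|^{-\frac{N-2s}{2}}$ for the branch, obtained by the method of \cite{Barrios-Quaas}) are in hand, the passage to the limit equation and the upgrade from local to global strong convergence in $L^{2}(\mathbb{R})\cap L^{p+1}(\mathbb{R})$ follow as in \cite[Lemma 8.4]{Frank-Lenzmann-Silvestre}, which is essentially your dominated-convergence argument. But both of these steps feed on the a priori bound of Step 1, so as written the proof does not close; you must replace the $\mathbb{R}^N$ blow-up by the one-dimensional blow-up in the transformed variable.
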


The proof of the previous result follows verbatim the one of \cite[Lemma 8.4]{Frank-Lenzmann-Silvestre} (see also \cite[Lemma 5.7]{Frank-Lenzmann}) once we get a suitable a priori bounds that will allow us to obtain the global convergence in the Hilbert space $L^2(\mathbb{R})$ instead of the local one commonly obtained by using the Ascoli-Arzela Theorem. 
Thus the key step will be to establish the next
\begin{lemma} 
If {$Q_{s_0}(|x|)>0$}, then $Q_s(|x|)\leq C$ for all $s \in [s_0, s^*)$. Therefore 
$$Q_s(|x|)\leq C\frac{1}{|x|^{\frac{N-2s}{2}}}, \, \mbox{ for every $s\in [s_0, s^*)$ and $|x|\gg 1$.}$$
\end{lemma}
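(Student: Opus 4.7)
My plan is to obtain the uniform $L^\infty$ bound by a Gidas--Spruck style blow-up argument applied directly to the transformed equation, and then to derive the polynomial decay by undoing the Emden--Fowler substitution and invoking the standard fast-decay estimates for the fractional Laplacian in $\mathbb{R}^N$. Positivity of $Q_s$ along the whole branch has already been guaranteed by Lemma~\ref{positividad}, and by a symmetric decreasing rearrangement (cf. Remark~\ref{auxi}) I will further assume $Q_s$ is even and monotonically decreasing on $[0,\infty)$, so that $\|Q_s\|_{L^\infty(\mathbb{R})} = Q_s(0)$.

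To run the blow-up, I would argue by contradiction: suppose a sequence $s_n \in [s_0, s^*)$ with $s_n \to \bar s$ and $M_n := Q_{s_n}(0) \to \infty$. Set $\lambda_n := M_n^{-(p-1)/(2s_n)} \to 0$ and $w_n(\kappa) := M_n^{-1} Q_{s_n}(\lambda_n \kappa)$, so $w_n(0)=1$ and $0 \leq w_n \leq 1$. Changing variables $\tau = \lambda_n y$ inside~\eqref{TT} and exploiting the short-range singularity $\mathcal{K}(t) \sim c_s |t|^{-(1+2s)}$ as $t \to 0$ (coming from the angular integral in~\eqref{kernel}) together with the exponential decay of $\mathcal{K}$ at infinity, the rescaled equation should take the form
$$(-\Delta)^{s_n}_{\mathbb{R}} w_n + \mathcal{A}_{s_n,N}\,M_n^{-(p-1)} w_n = w_n^p + \mathcal{R}_n,$$
where the mass term disappears (since $p>1$) and the nonlocal remainder $\mathcal{R}_n$ vanishes locally uniformly because it only sees scales of order $\lambda_n^{-1} \to \infty$, where $\mathcal{K}$ is exponentially small. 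Interior fractional regularity will then give equicontinuity of the $w_n$ on compact sets, so up to a subsequence $w_n \to w$ locally uniformly with $w \geq 0$, $w(0) = 1$, and $w$ solving $(-\Delta)^{\bar s}_{\mathbb{R}} w = w^p$ in $\mathbb{R}$. Since $p$ is strictly subcritical for the $1$D fractional Laplacian of order $\bar s$ (using $p < (1+2s_0)/(1-2s_0)$ when $s_0 < 1/2$, any $p$ when $s_0 \geq 1/2$, together with $\bar s \geq s_0$), a Liouville theorem of Chen--Li--Ou type forces $w\equiv 0$, contradicting $w(0)=1$. This will deliver the uniform bound $\|Q_s\|_{L^\infty} \leq C$.

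For the decay, I would put $u_s(r) := r^{-(N-2s)/2} Q_s(\ln r)$; the Emden--Fowler identity behind Lemma~\ref{lema4} shows that $u_s$ is a positive radial solution in $\mathbb{R}^N$ of
$$(-\Delta)^s u_s = |x|^{\gamma_s} u_s^p,\qquad \gamma_s := p\,\tfrac{N-2s}{2} - \tfrac{N+2s}{2},$$
which recovers~\eqref{P} when $p=p^*_{\alpha,s}$. From the already established $L^\infty$ bound on $Q_s$ one reads off $u_s(r) \leq C r^{-(N-2s)/2}$ for $r \geq 1$, so $|x|^{\gamma_s} u_s^p$ decays at infinity, and a barrier comparison against the $s$-harmonic function $|x|^{-(N-2s)}$ yields $u_s(r) \leq C r^{-(N-2s)}$ for $r \geq R_0$, uniformly in $s \in [s_0, s^*)$ (cf. the fast-decay analysis in \cite{Barrios-Quaas}). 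Pulling back through the inverse Emden--Fowler substitution then gives precisely $Q_s(|x|) \leq C|x|^{-(N-2s)/2}$ for $|x| \gg 1$. The hardest part will be the blow-up step: rigorously identifying, in the limit $\lambda_n \to 0$, the rescaled nonlocal operator $\mathcal{T}_{s_n}$ with the $1$D fractional Laplacian while simultaneously showing that the exponentially decaying tail of $\mathcal{K}$ produces only a remainder vanishing locally uniformly, so that a clean passage to the Liouville alternative is permitted.
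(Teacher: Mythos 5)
Your proposal follows essentially the same route as the paper: a Gidas--Spruck blow-up by contradiction on the transformed equation with the rescaling $M_n^{-1}Q_{s_n}(M_n^{-(p-1)/(2s_n)}\cdot)$, identification of the rescaled operator $\mathcal{T}_{s_n}$ with the one-dimensional fractional Laplacian via the local singularity of $\mathcal{K}$, a subcritical Liouville theorem contradicting $w(0)=1$, and then the decay estimate obtained by the method of \cite{Barrios-Quaas}. The only cosmetic difference is that the paper cites \cite{Felmer1} (for $s\ge 1/2$) and \cite{ZZ}, \cite{CLO} (for $s<1/2$) for the Liouville step, which matches your Chen--Li--Ou type argument.
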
 
\begin{proof} 
To show the universal boundedness of the solutions we will use the well-known blow-up  method of Gidas and Spruck in our nonlocal framework. 
Indeed, let us consider $s \in [s_0, s^*)$ and let us suppose, by contradiction, that there exists a sequence $s_n\to  s$  as $n \to \infty$ such that the solution $Q_{s_n}$  satisfies $0\leq M_n:=\| Q_{s_n}\|_{L^\infty(\mathbb{R})}= Q_{s_n}(0) \to +\infty$.
We define
$$
z_n(x)= \frac{Q_{s_n}(M_n^{-\frac{p-1}{2s_n}}x)}{M_n}, \quad x\in \mathbb{R}.
$$
and $\varepsilon_n=M_n^{-\frac{p-1}{2s_n}}$. Following the ideas done in \cite[Lemma 4.3]{Manuel} we get that
$$ 
 {{C_{N,s}}}\int_{\mathbb{R}}\varepsilon_n (z_n(\kappa))-z_n(\tau))\mathcal{K}_n(\varepsilon_n(\kappa-\tau))\,d\tau+ {{\mathcal{A}_{N,s} }}z_n(\kappa)=\varepsilon_n^{-2s_n}z_n^p.
$$
Since $K_n(t)$ behaves like $|t|^{1+2s_n}$ for $t$ close to $0$, it follows that 
$$
 {{C_{N,s}}}\int_{\mathbb{R}}\varepsilon_n (z_n(\kappa))-z_n(\tau))\mathcal{K}_n(\varepsilon_n(\kappa-\tau))\,d\tau= c\varepsilon_n^{-2s_n}(-\Delta)^{s_n}z_n(\kappa)+o(1).
$$
Thus, since $z_n \to z_\infty$ uniformly on compact sets we get that $z_\infty$ satisfies
$$
(-\Delta)^{s} z_\infty=z_\infty^p,
$$
and $z_\infty(0)=1$ that contradicts \cite[Theorem 1.2]{Felmer1} when $s\ge \frac{1}{2}$ and \cite[Theorem 4]{ZZ} (see also \cite{CLO}) if $s<\frac{1}{2}$ for dimension $N=1$. 
Therefore $Q_s(|x|)\leq C$ for all $s \in [s_0, s^*)$ and, by using the method developed in \cite{Barrios-Quaas}, we conclude that $Q_s(|x|)\leq C|x|^{-(N-2s)/2} $for all $s\in [s_0, s^*)$.
\end{proof}

Now we are able to get the
\begin{proof}[Proof of Proposition \ref{unicidad_2}] 
The proof follows like \cite[Lemma 8.5]{Frank-Lenzmann-Silvestre} (that is based on \cite[Proposition 5.2]{Frank-Lenzmann}) using the lower semi-continuity of the Morse index,  Lemma \ref{positividad}, {Theorem \ref{nodegenerancia_extendido}} (see Remark \ref{importante}) and Lemma \ref{convergencia}.
\end{proof}

\subsection{Proof of Theorem \ref{unicidad1}}

Let fix $0<s_0<1$ and $p<2^{**}_{s_0}-1$ and let us suppose by contradiction that there exist two positive  solutions $ Q_{s_0}\neq \widetilde{Q}_{s_0}$ of problem 
$$
\mathcal{T}_{s_0}v+ \mathcal{A}_{s_0,N} v = v^{p}\quad \mbox{ in }\mathbb{R},
$$
with $N(\widehat{\mathfrak{L}}_{Q_{s_0}})=N(\widehat{\mathfrak{L}}_{\widetilde{Q}_{s_0}})=1$. 
As we have noticed before, by using a symmetric decreasing rearrangement, we can assume that the previous equality implies that $N_{even}(\widehat{\mathfrak{L}}_{Q_{s_0}})=N_{even}(\widehat{\mathfrak{L}}_{\widetilde{Q}_{s_0}})=1$. 
Therefore, since both solutions satisfy \eqref{hip}, by Proposition \ref{unicidad_2} there exist two global branches $Q_s\in C^1([s_0, 1); {X^p})$ and $\widetilde{Q}_s\in C^1([s_0, 1); {X^p})$ that solve
$$
\mathcal{T}_{s}v+ {\mathcal{A}_{s,N}} v = v^{p}\quad \mbox{ in }\mathbb{R}.
$$
%
On one hand, by the local uniqueness given in Lemma \ref{unicidad_1}, we have that the branches cannot intersect between them. 
On the other hand, by Proposition \ref{unicidad_2}, we know that
$$
Q_s\to Q_{1}\mbox { in $L^{2}(\mathbb{R})\cap L^{p+1}(\mathbb{R})$ when $s\to 1^{-}$,}
$$
$$
\widetilde{Q}_s\to {Q}_{1}\mbox { in $L^{2}(\mathbb{R})\cap L^{p+1}(\mathbb{R})$ when $s\to 1^{-}$,}
$$
with $Q_1>0$ is the unique solution of the equation
{
$$
\mathcal{T}_1Q_{1}+ \left(\frac{N-2}{2}\right)^{2} Q_{1} = Q_1^{p^*_{\alpha,1}-1}\quad \mbox{ in }\mathbb{R},
$$
where $\mathcal{T}_1:=\lim_{s\to 1^{-}} \mathcal{T}_{s}$.
} Using now that $Q_{1}$ {has a non-degenerate linearized operator} by the results of \cite{Gladiali-Grossi-Neves}, by using the Implicit Function Theorem in a {\it backward} way (see page 33 of \cite{Frank-Lenzmann}), we get that the branches $Q_s$ and $\widetilde{Q}_s$ must intersect in some $s_0\leq s<1$ giving a contradiction.\quad $\square$ 

\subsection{Proof of Theorem \ref{unicidad_original}}

By using Theorem \ref{unicidad1} we can establish the proof of our Theorem \ref{unicidad_original}. 
Indeed, let $Q$ be the transformation of $U$ under the Emden-Fowler change of variable by Lemma \ref{lema4} it is clear that  $N(\widehat{\mathfrak{L}}_{Q})=1$. Since, as we commented at the beginning of Subsection 3.1, we can assume that $Q$ is, up to translation, an even function, the uniqueness follows by applying Theorem \ref{unicidad1}.\quad$\square$\bigskip

We should note here that if the previous uniqueness Theorems \ref{unicidad1} and \ref{unicidad_original} were announced for ground state solutions
(see \cite{Frank-Lenzmann}), then the even property would trivially hold using symmetric rearrangement as mentioned above.\bigskip

{\bf Acknowledgements} 
S. A. was partially supported by FONDECYT Grants \# 1161635, \# 1181125 and \# 1171691 (Chile); 
B. B. was partially supported by AEI Grant MTM2016-80474-P and Ram\'on y Cajal fellowship RYC2018-026098-I (Spain). A. Q. was partially supported by FONDECYT Grant \# 1190282 and Programa Basal, CMM. U. de Chile.


\end{document}